\newtheorem{thm}{Theorem}[section]
\newtheorem{cor}[thm]{Corollary}
\newtheorem{lemma}[thm]{Lemma}
\newtheorem{prop}[thm]{Proposition}
\newtheorem{rem}[thm]{Remark}
\newtheorem{defi}[thm]{Definition}
\newtheorem{conj}[thm]{Conjecture}
\newcommand{\Seq}{\textbf{\textup{Seq}}}
\newcommand{\Dom}{\textbf{\textup{Dom}}}
\renewcommand{\Im}{\textbf{\textup{Im}}}
\newcommand{\Z}{\mathbb{Z}}
\newcommand{\R}{\mathbb{R}}
\newcommand{\E}{\widetilde{S}^{/\!/}}
\newcommand{\Av}{S}
\newcommand{\AvBA}{\E}
\newcommand{\gr}[1]{\textup{gr}(#1)}
\newcommand{\upgr}[1]{\overline{\textup{gr}}(#1)}
\newcommand{\logr}[1]{\underline{\textup{gr}}(#1)}
\newcommand{\deck}{(k{+}1)\cdots 1}
\newcommand{\wass}{\textup{Wass}}
\newcommand{\collec}[1]{\langle{#1}\rangle}
\newcommand{\pargram}{\diamondsuit}
\newcounter{i}
\newcommand{\drawpermutation}[3][1]{\begin{tikzpicture}[scale=0.5,baseline=(O.base)]
\setcounter{i}{0}
\foreach \j in {#2} {
\stepcounter{i}
\draw (0.5*#1,\value{i}*#1) -- (#3*#1+0.5*#1,\value{i}*#1);
\draw (\value{i}*#1,0.5*#1) -- (\value{i}*#1,#3*#1+0.5*#1);
\node at (\value{i}*#1, 0) {\footnotesize$\j$};
\draw[fill] (\value{i}*#1, \j*#1) circle (0.2);
}
\node (O) at (#3*0.5*#1,#3*0.5*#1) {};
\end{tikzpicture}}
\newcommand{\drawpattern}[4][1]{\begin{tikzpicture}[scale=0.5,baseline=(O.base)]
\foreach \x in {1,...,#3} {
\draw (0.5*#1,\x*#1) -- (#3*#1+0.5*#1,\x*#1);
\draw (\x*#1,0.5*#1) -- (\x*#1,#3*#1+0.5*#1);
}
\setcounter{i}{0}
\foreach \j in {#2} {
\stepcounter{i}
\node at (\value{i}*#1, 0) {\footnotesize$\j$};
\draw[fill] (\value{i}*#1, \j*#1) circle (0.2);
\foreach \k in {#4} {
\ifnum \j=\k
\draw [thick] (\value{i}*#1, \j*#1) circle (0.4);
\fi
}
}
\node (O) at (#3*0.5*#1,#3*0.5*#1) {};
\end{tikzpicture}}
\newcommand{\neal}[1]{\mbox{}{\sf\color{red}[#1]}\marginpar{\color{red}\Large$*$}}
\begin{document}

\title{Bounded affine permutations\\ 
   II. Avoidance of decreasing patterns}

\author{Neal Madras
\footnote{Supported in part by a Discovery Grant
from NSERC Canada, and by a Minor Research Grant from the Faculty of
Science at York University}
\\ Department of Mathematics and Statistics \\
York University \\ 4700 Keele Street  \\ Toronto, Ontario  M3J 1P3 Canada 
\\  {\tt  madras@yorku.ca} \vspace{.5pc}
\\ \vspace{.5pc} and \\
Justin M. Troyka\\
Department of Mathematics and Computer Science \\
Davidson College \\
Davidson, NC 28035
\\ {\tt jutroyka@davidson.edu} }
\maketitle

\begin{abstract}
We continue our study of a new boundedness condition for affine permutations, motivated by the 
fruitful concept of periodic boundary conditions in statistical physics.  
We focus on bounded affine permutations of size $N$ that avoid the 
monotone decreasing pattern of fixed size $m$.  
We prove that the number of such permutations is asymptotically 
equal to $(m-1)^{2N} N^{(m-2)/2}$ times an explicit constant
as $N\to\infty$.
For instance, the number of bounded affine permutations of size $N$ that avoid $321$ is asymptotically equal to $4^N (N/4\pi)^{1/2}$.
We also prove a permuton-like result for the scaling limit 
of random permutations from this class,
showing that the plot of a typical bounded affine permutation avoiding $m\cdots1$ looks like $m-1$ random lines of slope $1$ whose $y$ intercepts sum to $0$. \vspace{.3pc}

\noindent\textbf{MSC classes:} 05A05 (primary), 05A16, 60C05, 60G57 \vspace{.3pc}

\noindent\textbf{Keywords:} permutation, affine permutation, permutation pattern, asymptotic enumeration, permuton, random measure
\end{abstract}


\section{Introduction}
   \label{sec-intro}
This paper is a continuation of the research begun in our companion paper \cite{MT1}. Accordingly, some of the text and figures in this introduction are drawn from \cite[Sec.\ 1]{MT1}.

Pattern-avoiding permutations have been studied actively in the combinatorics literature for the past four decades.  
(See Section \ref{sec-definitions} for definitions of terms we use.)
Some sources on permutation patterns include: \cite{Bevan} for essential terminology, \cite[Ch.\ 4]{BonaCP} for a textbook introduction, and \cite{VatterSurvey} for an in-depth survey of the literature.
Pattern-avoiding permutations arise in a variety of mathematical contexts, particularly algebra and the analysis of algorithms.
Research such as \cite{Crites, BilleyCrites} have extended these investigations by considering
affine permutations that avoid one or more (ordinary) permutations as patterns.

\begin{defi} 
   \label{def.affine}
An \emph{affine permutation} of size $N$ is a bijection $\sigma \colon \Z \to \Z$ such that:
\begin{enumerate}[(i)]
\item $\sigma(i+N) \,=\, \sigma(i)\,+\, N$ for all 
$i \in \Z$, and 
\item $\sum_{i=1}^N \sigma(i) = \sum_{i=1}^N i$.
\end{enumerate}\end{defi}

Condition (\textit{ii}) 
can be viewed as a ``centering'' condition, since any
bijection satisfying (\textit{i}) can be made to satisfy 
(\textit{ii}) by adding a constant to the function.
The affine permutations of size $N$ form an infinite Coxeter group under composition, with $N$ generators; see Section 8.3 of Bj\"orner and Brenti \cite{BB} for a detailed look at affine permutations from this perspective.

For any given size $N>1$, there are infinitely many affine permutations of size $N$; indeed, for some patterns such as $\tau=321$, there are infinitely many affine permutations of size $N$ that avoid $\tau$.  One can view the following definition, which 
we introduced in our companion paper \cite{MT1}, 
as a reasonable attempt to make these sets finite, 
but
there are more compelling reasons for considering this definition,
as we describe below.

\begin{defi} 
   \label{def.bounded}
A \emph{bounded affine permutation} of size $N$ is an affine permutation $\sigma$ of size $N$ such that $|\sigma(i) - i| < N$ for all $i$. \end{defi}

Figure \ref{fig:boundedaffine} illustrates an example of a bounded affine permutation.

\begin{figure}
\[
\begin{tikzpicture}[scale=0.25]
\draw (1,5) [fill=black] circle (.3);
\draw (2,2) [fill=black] circle (.3);
\draw (3,4) [fill=black] circle (.3);
\draw (4,9) [fill=black] circle (.3);
\draw (5,0) [fill=black] circle (.3);
\draw (6,1) [fill=black] circle (.3);
\draw (7,11) [fill=black] circle (.3);
\draw (8,8) [fill=black] circle (.3);
\draw (9,10) [fill=black] circle (.3);
\draw (10,15) [fill=black] circle (.3);
\draw (11,6) [fill=black] circle (.3);
\draw (12,7) [fill=black] circle (.3);
\draw (13,17) [fill=black] circle (.3);
\draw (14,14) [fill=black] circle (.3);
\draw (15,16) [fill=black] circle (.3);
\draw (16,21) [fill=black] circle (.3);
\draw (17,12) [fill=black] circle (.3);
\draw (18,13) [fill=black] circle (.3);
\draw (19,23) [fill=black] circle (.3);
\draw (20,20) [fill=black] circle (.3);
\draw (21,22) [fill=black] circle (.3);
\draw (22,27) [fill=black] circle (.3);
\draw (23,18) [fill=black] circle (.3);
\draw (24,19) [fill=black] circle (.3);
\draw [thick] (-2,8) -- (27,8);
\draw [thick] (8,-2) -- (8,27);
\node at (-1,-0.5) {$\iddots$};
\node at (26,26) {$\iddots$};
\draw [dashed,thick] (-2,4) -- (21,27);
\draw [dashed,thick] (4,-2) -- (27,21);
\end{tikzpicture} \]
\caption{A bounded affine permutation of size $6$, whose values on $1,\ldots,6$ are $2, 7, -2, -1, 9, 6$. For the affine permutation to be bounded, its entries must all lie strictly between the dashed lines.}
\label{fig:boundedaffine}
\end{figure}
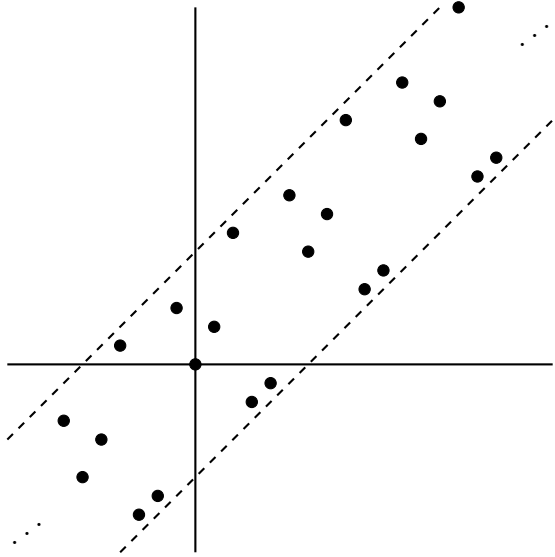

\begin{rem}
Affine permutations with a different boundedness condition were introduced by Knutson, Lam, and Speyer \cite{KLS}, who used them to study the totally non-negative Grassmannian and positroids. The bounded affine permutations in our paper are not the same as those.
\end{rem}

Let $S_N$ denote the set of permutations of size $N$, and let $\E_N$ denote the set of bounded affine permutations of size $N$.
We also define
\[        S \;:=\;  \bigcup_{N\ge0}S_N
     \hspace{5mm}\hbox{and}\hspace{5mm}
     \E   \;:=\;  \bigcup_{N\ge1}\E_N\,.
\]
In our companion paper \cite{MT1}, we find exact and asymptotic formulas for $|\E_N|$, the total number of bounded affine permutations of size $N$. We show that
\begin{equation}
    \label{eq.Enexact}
    |\E_N| \;=\; \sum_{m=0}^N \binom{N}{m} \sum_{k=0}^m \binom{m}{N-k} (-1)^{N-m} a(m,k)
\end{equation}
where $a(m,k)$ are the Eulerian numbers (the number of permutations of size $m$ with $k$ excedances), and that
\begin{equation}
    \label{eq.Enasym}
  |\E_N| \;\sim \; 
  \sqrt{\frac{3}{2\pi e N}}\, 2^N \,N! \hspace{5mm}
    \hbox{ as }N\rightarrow\infty.
\end{equation}.

If we view a permutation $\pi\in S_N$ as a
bijection on $[N]$, then we can extend it periodically by 
Equation (i) of Definition \ref{def.affine}
to a bijection $\oplus\pi$ on $\mathbb{Z}$; that is,
\[ \oplus \pi(i+kN) = \pi(i) + kN \quad \text{for $i \in [N]$ and $k \in \Z$}. \]
Observe that $\oplus \pi \in \E_N$ (see 
Figure \ref{fig.affinepi}). We call $\oplus \pi$ the \emph{infinite sum} of $\pi$. The map $\pi \mapsto \oplus \pi$ is an injection from $S_N$ into $\E_N$.

\setlength{\unitlength}{1.2mm}
\begin{figure}
  \begin{center}
%
\begin{picture}(70,70)(-25,-25)
\put(-23,0){\vector(1,0){68}}
\put(0,-23){\vector(0,1){68}}
\put(2,2){\line(1,0){20}}
\put(2,2){\line(0,1){20}}
\put(2,22){\line(1,0){20}}
\put(22,2){\line(0,1){20}}
\put(2,0){\line(0,1){1}}
\put(1,-4){$1$}
\put(22,0){\line(0,1){1}}
\put(20,-4){$N$}
\put(0,2){\line(1,0){1}}
\put(-3,1){$1$}
\put(0,22){\line(1,0){1}}
\put(-4,21){$N$}
\put(13,8){\huge{${\pi}$}}
\put(2,8){\circle*{2}}
\put(4,2){\circle*{2}}
\put(6,16){\circle*{2}}
\put(8,22){\circle*{2}}
\put(22,6){\circle*{2}}
%
\put(-22,-2){\line(1,1){47}}
\put(-2,-22){\line(1,1){47}}
\put(24,0){\line(0,1){1}}
\put(24,-4){$N{+}1$}
\put(42,0){\line(0,1){1}}
\put(40,-4){$2N$}
\put(24,24){\line(1,0){20}}
\put(24,24){\line(0,1){20}}
\put(24,44){\line(1,0){20}}
\put(44,24){\line(0,1){20}}
\put(26,30){\large{copy of ${\pi}$}}
\put(24,30){\circle*{2}}
\put(26,24){\circle*{2}}
\put(28,38){\circle*{2}}
\put(30,44){\circle*{2}}
\put(44,28){\circle*{2}}
\put(-20,-20){\line(1,0){20}}
\put(-20,-20){\line(0,1){20}}
\put(-20,0){\line(1,0){20}}
\put(0,-20){\line(0,1){20}}
\put(-18,-14){\large{copy of ${\pi}$}}
\put(-20,-14){\circle*{2}}
\put(-18,-20){\circle*{2}}
\put(-16,-6){\circle*{2}}
\put(-14,0){\circle*{2}}
\put(0,-16){\circle*{2}}
%
%
\end{picture}
\caption{Schematic plot of a permutation $\pi\in S_N$ and 
its periodic extension $\oplus \pi \in \E_N$.  For an affine
permutation of size $N$ to be bounded, all points of the 
plot must lie on or between the two diagonal lines.
\label{fig.affinepi}}  
  \end{center}
\end{figure}
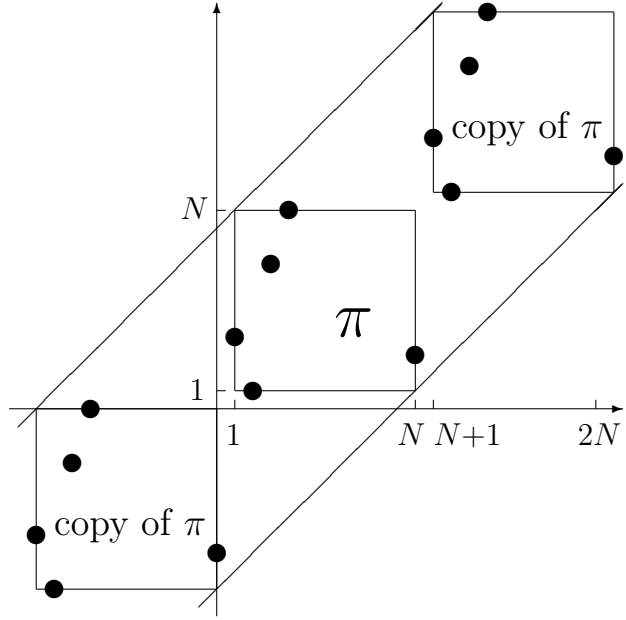

This paper concerns the set of bounded affine permutations that avoid an (ordinary) permutation $\tau$; this set is denoted $\AvBA(\tau)$, and we define pattern avoidance and related notions in Section \ref{sec-definitions}.

Let $\tau \in S_k$. It is routine to check that, if $\tau_1>\tau_k$ (or more generally if $\tau$ is sum-indecomposable), then $\sigma\oplus \pi$ avoids $\tau$ whenever $\sigma$ and $\pi$ both avoid $\tau$.
Thus the injection $\pi\mapsto \oplus\pi$ mentioned 
above is also an injection from $S_N(\tau)$ into $\E_N(\tau)$.
This proves that $|S_N(\tau)|\,\leq\,|\E_N(\tau)|$ whenever $\tau$
is sum-indecomposable.
It is harder to find a good general upper bound for $|\E_N(\tau)|$.
We posed the following conjecture in \cite{MT1}.

\begin{conj}
    \label{conj1}
The proper growth rate 
$\gr{\E(\tau)}:=\lim_{N\rightarrow\infty}|\E_N(\tau)|^{1/N}$ 
exists and equals the Stanley--Wilf limit 
$L(\tau):=\lim_{N\rightarrow\infty}|S_N(\tau)|^{1/N}$
for every sum-indecomposable pattern $\tau$.  
\end{conj}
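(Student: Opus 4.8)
Since the Stanley--Wilf limit $L(\tau)$ is known to exist for every pattern $\tau$ (by submultiplicativity of $N\mapsto|S_N(\tau)|$ together with the Marcus--Tardos theorem that $|S_N(\tau)|$ grows at most exponentially), it suffices to sandwich $|\E_N(\tau)|^{1/N}$ between $L(\tau)-o(1)$ and $L(\tau)+o(1)$; existence of $\gr{\E(\tau)}$ then comes along for the ride. The lower estimate is already in hand in the excerpt: for sum-indecomposable $\tau$ the infinite-sum map $\pi\mapsto{\oplus}\pi$ embeds $S_N(\tau)$ into $\E_N(\tau)$, so $|\E_N(\tau)|\ge|S_N(\tau)|$ and hence $\liminf_{N\to\infty}|\E_N(\tau)|^{1/N}\ge L(\tau)$. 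All the real content is therefore in the matching \emph{upper} bound $\limsup_{N\to\infty}|\E_N(\tau)|^{1/N}\le L(\tau)$, and that is where the difficulty lies.

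\textbf{The decreasing case $\tau=m\cdots1$ (which this paper settles).} Here $L(m\cdots1)=(m-1)^2$ by classical results going back to Erd\H{o}s--Szekeres and made precise by Regev, so the target is $|\E_N(m\cdots1)|\le(m-1)^{2N}\,\mathrm{poly}(N)$. I would argue geometrically. Fix $\sigma\in\E_N(m\cdots1)$ and regard the point set $\{(i,\sigma(i)):i\in\Z\}$ as a poset under the product order; avoiding $m\cdots1$ says exactly that this poset has no antichain of size $m$, so by Dilworth's theorem $\sigma$ is a union of $m-1$ increasing subsequences. The first genuine step is to \emph{periodize} this: because the poset is invariant under the shift $(i,j)\mapsto(i+N,j+N)$, one wants a chain decomposition into $m-1$ chains that is itself shift-invariant, an ``affine Dilworth'' statement I would try to prove by a compactness/K\"onig's-lemma argument on the quotient, or by an LP-duality (max-flow) argument that builds in the shift symmetry. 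Granting it, a bounded affine $\sigma$ avoiding $m\cdots1$ is recovered from the following data, so $|\E_N(m\cdots1)|$ is at most the number of admissible choices of it: (i) the periodic partition of the $N$ positions per period among the $m-1$ chains ($\le(m-1)^N$); (ii) the periodic partition of the $N$ values per period among the $m-1$ chains ($\le(m-1)^N$); and (iii) for each chain a ``phase'' matching its position sequence to its value sequence, of which boundedness $|\sigma(i)-i|<N$ leaves only $\mathrm{poly}(N)$ many (the increasing matching is then forced). This gives $|\E_N(m\cdots1)|\le(m-1)^{2N}\,\mathrm{poly}(N)$, hence $\gr{\E(m\cdots1)}=(m-1)^2=L(m\cdots1)$. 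It is precisely the use of periodicity \emph{twice} --- both positions and values split into $m-1$ classes of size $N$, not $2N$ --- that produces the right base $(m-1)^2$. (Pinning down the polynomial exponent and leading constant promised in the abstract would then require an exact decomposition into the $m-1$ near-diagonal strands followed by singularity analysis of the resulting generating functions, or equivalently a local-limit analysis of the joint $y$-intercepts of the strands; that goes well beyond the conjecture.)

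\textbf{The general case and the main obstacle.} For an arbitrary sum-indecomposable $\tau$ there is no substitute for ``union of $m-1$ increasing subsequences'', and this is the crux. The obvious fallback is ``unrolling'': restrict $\sigma\in\E_N(\tau)$ to a window of $cN$ consecutive positions and standardize, obtaining an element of $S_{cN}(\tau)$, with $\sigma$ recoverable from a window of length $\ge N$ together with $\mathrm{poly}(N)$ extra data, so $|\E_N(\tau)|\le|S_{cN}(\tau)|\,\mathrm{poly}(N)$. The trouble is that boundedness forces the range of $\sigma$ over any length-$N$ window to spread across about $3N$ consecutive integers, so one is effectively pushed to $c\ge2$, and the bound degrades to $\limsup_{N}|\E_N(\tau)|^{1/N}\le L(\tau)^{c}$ with $c\ge2$ --- the exponential rate gets squared rather than matched. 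I expect the main obstacle to be precisely this loss: one needs to encode a bounded affine permutation by (roughly) a \emph{single} ordinary $\tau$-avoider of size $(1+o(1))N$, not by a window of twice that length, using periodicity and bijectivity to cancel the apparent extra freedom. I do not see how to do this in general --- which is why the statement remains a conjecture --- and I would expect the argument above to extend only to pattern classes (layered patterns, or other classes with an explicit structure theory) for which a correspondingly rigid affine structure theorem can be proved.
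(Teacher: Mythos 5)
You are proposing a proof for a statement that the paper explicitly labels a \emph{conjecture} and explicitly says it cannot prove: the authors write ``in general we cannot even prove that the proper growth rate $\gr{\E(\tau)}$ exists.'' You correctly recognize this, so your proposal is not really a proof attempt for the full statement but an analysis of what is and is not known, together with a sketch for the decreasing case. That sketch is essentially the paper's own argument. Your encoding by (i) a periodic partition of the positions, (ii) a periodic partition of the values, and (iii) a bounded ``phase'' per chain is precisely the map $\Psi(\collec{n},\collec{G},\collec{H},\collec{\Delta})$ of the paper's Section~2, and the count $(m-1)^{2N}\cdot\mathrm{poly}(N)$ is exactly what Equation~(\ref{eq.4upbound}) together with Theorem~\ref{thm.richmond} and the crude bound $Z(\collec{n})\le(2N+1)^{k-1}$ delivers. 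The paper of course goes further and pins down the leading constant (Theorem~\ref{thm.E4321}), but you are right that one does not need that to get the growth rate.

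Two small corrections to the sketch. First, you worry about an ``affine Dilworth'' statement and suggest a K\"onig's-lemma or LP-duality argument to force shift-invariance of the chain decomposition. This is overkill: the paper's Proposition~\ref{prop.unionincreasing} uses the classical rank function (length of the longest decreasing run starting at a position), which is manifestly periodic since $a$ and $a+N$ have the same rank, so the chain decomposition it produces is shift-invariant with no extra work. There is no genuine ``affine Dilworth'' obstruction here. Second, your assessment of the general obstacle --- that windowing/unrolling inherently costs at least a square, $\limsup|\E_N(\tau)|^{1/N}\le L(\tau)^c$ with $c\ge 2$ --- is too pessimistic: the companion paper already proves the linear bound $\upgr{\E(\tau)}\le 3L(\tau)$, which is strictly better than $L(\tau)^2$ whenever $L(\tau)>3$. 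The true open problem is sharpening that constant $3$ to $1$, not closing a quadratic gap. Apart from these points, your diagnosis of where the difficulty lies --- the absence, for general sum-indecomposable $\tau$, of a structure theorem as rigid as ``union of $m-1$ increasing periodic subsequences'' --- agrees with the authors'.
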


We remark that the indecomposability condition in the conjecture is 
important; e.g.\ the only affine permutation that avoids $2143$ is the identity permutation.
In our companion paper \cite{MT1}, we prove that the conjecture holds for some specific choices of $\tau$ --- and the results of this paper show that it holds when $\tau$ is a decreasing pattern --- 
but in general we cannot
even prove that the proper growth rate $\gr{\E(\tau)}$ exists.
At least
it is easy to show that the upper growth rate $\upgr{\E(\tau)}$ is
always finite: in fact, in the companion paper we show that $\upgr{\E(\tau)} \leq \,3L(\tau)$,
where $L(\tau)$ is the Stanley--Wilf limit.



In this paper, we focus on the 
avoidance of monotone decreasing patterns $m(m-1)\cdots 321$
in bounded affine permutations.  More specifically, our
first main result (Theorem \ref{thm.E4321}) is that for
every $m\geq 3$ we have the asymptotic behaviour
\begin{equation} 
   \label{eq.Enm1}
   \left|\E_N(m(m-1)\cdots 321)\right|  \;\sim\;
    A_m N^{(m-2)/2}(m-1)^{2N}   \hspace{5mm}\hbox{as }
    N\rightarrow\infty
\end{equation}
where the constant $A_m$ is given by 
\begin{equation}
    \label{eq.Amdef}
    A_m \;=\;  \frac{\sum_{j=0}^{\lfloor (m-1)/2\rfloor} (-1)^j
    \binom{m-1}{j} (m-2j-1)^{m-2} 
    }{(4\pi)^{(m-2)/2}\,(m-1)^{(m-1)/2} \left[(m-2)!\right]^2 }\,.
\end{equation}
%
(See Remark \ref{rem.Zk} and the subsequent discussion for
comments on the form this result.)
The key to proving (\ref{eq.Enm1}) is a counting argument
based on the decomposition of any member of
$\E_N(m(m-1)\cdots 321)$ into $m-1$ increasing (periodic) 
subsequences (Proposition \ref{prop.unionincreasing}).
It turns out that unlike the situation for ordinary 
permutations avoiding monotone patterns, these $m-1$
subsequences are typically 
well separated in the bounded affine case,
as represented schematically in Figure \ref{fig.321} in 
the case $m=3$.
\setlength{\unitlength}{1.2mm}

\begin{figure}
  \begin{center}
%
\begin{picture}(40,70)(-5,-25)
\put(2,2){\line(1,0){20}}
\put(2,2){\line(0,1){20}}
\put(2,22){\line(1,0){20}}
\put(22,2){\line(0,1){20}}
\put(2,0){\line(0,1){1}}
\put(1,-3){$1$}
\put(22,0){\line(0,1){1}}
\put(20,-3){$N$}
\put(0,2){\line(1,0){1}}
\put(-3,1){$1$}
\put(0,22){\line(1,0){1}}
\put(-3,21){$N$}
\put(0,15){\line(1,0){1}}
\put(0,-11){\line(1,0){1}}
\multiput(2,15)(2,2){11}{\circle*{0.6}}
\multiput(2,-11)(2,2){11}{\circle*{0.6}}
%
\put(2,22){\textcolor{blue}{\line(1,1){20}}}
\put(2,-18){\textcolor{blue}{\line(1,1){20}}}
\put(2,2){\textcolor{blue}{\line(1,1){20}}}
\put(2,14){\textcolor{red}{\line(1,1){20}}}
\put(2,16){\textcolor{red}{\line(1,1){20}}}
\put(2,-10){\textcolor{red}{\line(1,1){20}}}
\put(2,-12){\textcolor{red}{\line(1,1){20}}}
%
%
\end{picture}
  \end{center}
  \caption{Sketch of typical 321-avoiding bounded affine permutation
   of size $N$.   
   The plot is completely covered by two diagonal strips of width
   $\alpha N$ where $\alpha$ is a small positive number.
   For $(m(m{-}1)\cdots 21)$-avoidance, we would need $m-1$ such strips.
   In a typical plot, each strip covers an approximately equal
   number of points.}
  \label{fig.321}
\end{figure}
Indeed, in the 
plot of a random member of $\E_N(m(m-1)\cdots 321)$,
it is highly likely that each of the $m-1$ subsequences 
is confined to a narrow diagonal strip, and that the
points are approximately uniformly 
distributed within that strip in a sense that we shall make
precise in Section \ref{sec.lowerbound}.   
In addition, each subsequence
is likely to have approximately $N/(m-1)$ points 
with first coordinate in $[1,N]$.   This all suggests that 
as we let $N$ tend to infinity, the plot (scaled down 
by a factor of $N$) looks more and 
more like $m-1$ solid lines of slope 1 
(Figure \ref{fig.321limit}).  
\setlength{\unitlength}{1mm}
\begin{figure}
  \begin{center}
\begin{picture}(100,70)
\put(0,0){
\begin{picture}(30,70)(-5,-25)
\put(2,2){\line(1,0){20}}
\put(2,2){\line(0,1){20}}
\put(2,22){\line(1,0){20}}
\put(22,2){\line(0,1){20}}
\put(2,0){\line(0,1){1}}
\put(1,-3){$1$}
\put(22,0){\line(0,1){1}}
\put(20,-3){$N$}
\put(0,2){\line(1,0){1}}
\put(-3,1){$1$}
\put(0,22){\line(1,0){1}}
\put(-3,21){$N$}
\put(0,15){\line(1,0){1}}
\put(0,-11){\line(1,0){1}}
\multiput(2,15)(2,2){11}{\circle*{0.6}}
\multiput(2,-11)(2,2){11}{\circle*{0.6}}
%
\put(2,22){\textcolor{blue}{\line(1,1){20}}}
\put(2,-18){\textcolor{blue}{\line(1,1){20}}}
\put(2,2){\textcolor{blue}{\line(1,1){20}}}
\put(2,14){\textcolor{red}{\line(1,1){20}}}
\put(2,16){\textcolor{red}{\line(1,1){20}}}
\put(2,-10){\textcolor{red}{\line(1,1){20}}}
\put(2,-12){\textcolor{red}{\line(1,1){20}}}
%
%
\end{picture}
}

\put(45,34){$\Longrightarrow$}

\put(70,0){
\begin{picture}(30,70)(-5,-25)
\put(2,2){\line(1,0){20}}
\put(2,2){\line(0,1){20}}
\put(2,22){\line(1,0){20}}
\put(22,2){\line(0,1){20}}
\put(2,0){\line(0,1){1}}
\put(1,-3){$0$}
\put(22,0){\line(0,1){1}}
\put(20,-3){$1$}
\put(0,2){\line(1,0){1}}
\put(-3,1){$0$}
\put(0,22){\line(1,0){1}}
\put(-3,21){$1$}
\put(0,15){\line(1,0){1}}
\put(-6,14){$+\delta$}
\put(0,-11){\line(1,0){1}}
\put(-6,-12){$-\delta$}
%
\put(2,22){\textcolor{blue}{\line(1,1){20}}}
\put(2,-18){\textcolor{blue}{\line(1,1){20}}}
\put(2,15){\textcolor{red}{\line(1,1){20}}}
\put(2,-11){\textcolor{red}{\line(1,1){20}}}
\end{picture}
}
\end{picture}
 \end{center}
\caption{Scaling of a random element of $\E_N(321)$ 
to a permuton-like limit.  As $N\rightarrow\infty$, we rescale each 
axis of $\mathbb{R}^2$ by $1/N$.  We view the left figure as a 
discrete probability measure with an atom of mass $1/N$ at each point
of the plot. 
The right figure represents the uniform probability measure on the 
line segments $y=x+\delta$ and $y=x-\delta$ ($0\leq x\leq 1$),
where $\delta$ is uniformly distributed on $[-1,1]$.  }
  \label{fig.321limit}
\end{figure}

Such a phenomenon can be conveniently described in the 
framework of weak convergence of probability measures in the plane,
exactly as in the context of permutons (\cite{Glebov,Hoppen}).
We describe our framework in Section \ref{sec.weak}.  
Our second main result says that 
for fixed $m$, the scaling limit as $N\rightarrow\infty$ 
of a random element of $\E_N(m(m-1)\cdots 321)$
(viewed as an atomic measure on the plane) 
is a uniform measure 
\footnote{proportional to one-dimensional Lebesgue measure}
on $m-1$ parallel lines of slope 1 with $y$-intercepts that
are randomly chosen from $[-1,1]$, independently except
for the condition that their sum is 0.   Theorem
\ref{thm.wassmain} is a precise statement of this result.

Our motivation for initiating the study of bounded affine 
permutations is described in \cite{MT1}.  
Briefly, it is our attempt to impose an analogue of
``periodic boundary conditions'' on the plots of random
$\tau$-avoiding (ordinary) permutations for patterns such as
$\tau=4321$ or $\tau=4231$, inspired by 
Clisby's work on self-avoiding walks \cite{Clis}.  We anticipate
that (a part of) the plot of a random member of $\E_N(\tau)$ 
in some sense looks
like the \textit{middle} of the plot a random member of $S_M(\tau)$
(for some suitable $M$), far from the ``boundary effects'' that
come into play near the corners of the square $[1,M]^2$ 
and constrain the plot of an ordinary permutation
(see Figures \ref{fig.4321}--\ref{fig.4231aff}).
\begin{figure}
  \centering
    \includegraphics[clip, trim=2.5cm 8cm 1.5cm 8cm, scale=0.95]{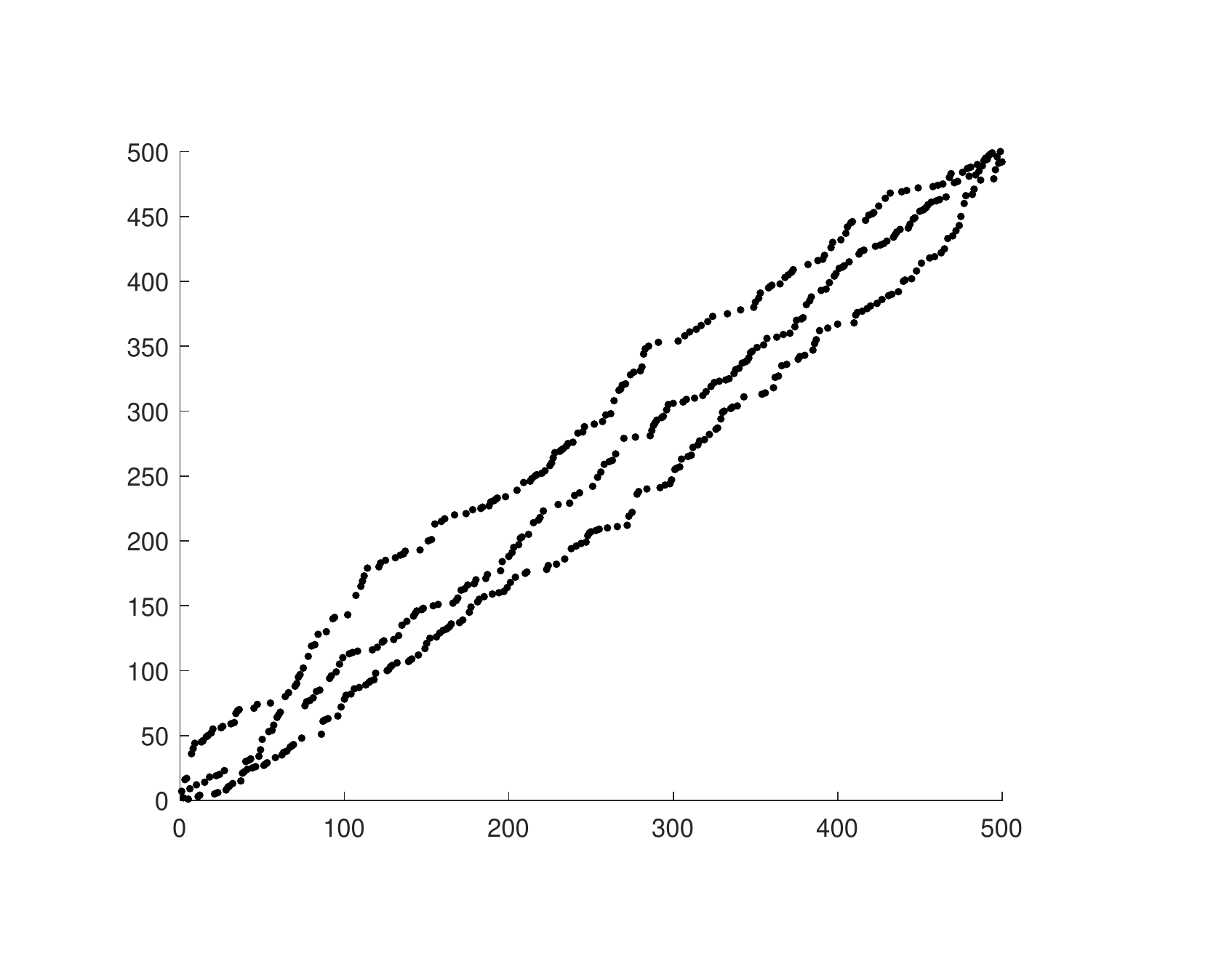}
    \caption{A random 4321-avoiding permutation
    of size 500.  This was generated by 
    G\"{o}khan Y{\i}ld{\i}r{\i}m
    using a Markov chain Monte Carlo algorithm.
    Our motivation for the present work
    was the belief that the part of this plot in 
    the strip $200<x<300$, say, should look 
    like part of the plot of a 4321-avoiding
    bounded affine permutation.}
    \label{fig.4321}
\end{figure}
\begin{figure}
    \centering
    \includegraphics[scale=0.45]{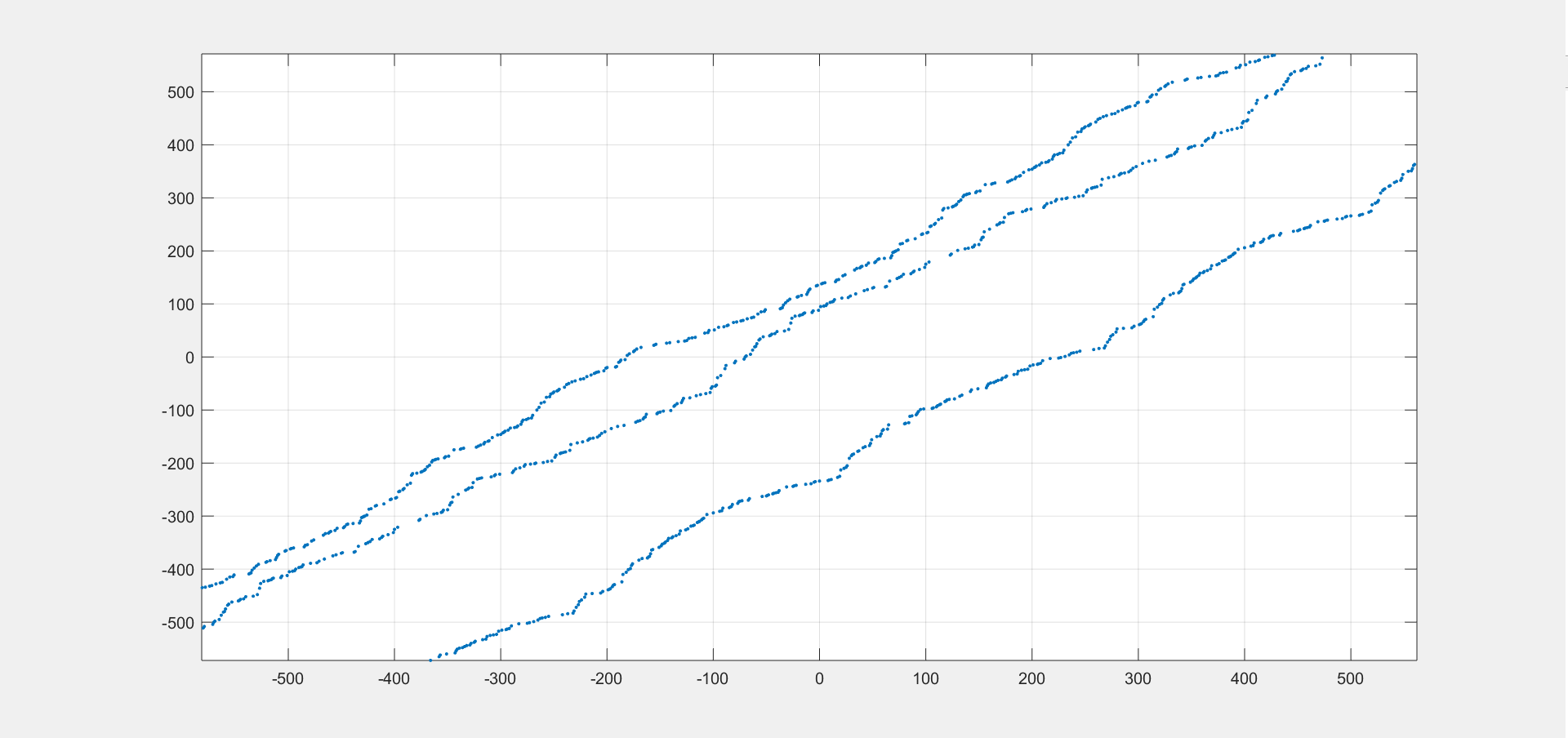}
    \caption{A random 4321-avoiding bounded affine 
    permutation of size 500.  This was generated by
    Quynh Vu using a Markov chain Monte Carlo algorithm, under the supervision of the 
    first author.}
    \label{fig.4321aff}
\end{figure}
\begin{figure}
  \centering
    \includegraphics[clip, trim=2.5cm 8cm 1.5cm 8cm, scale=0.95]{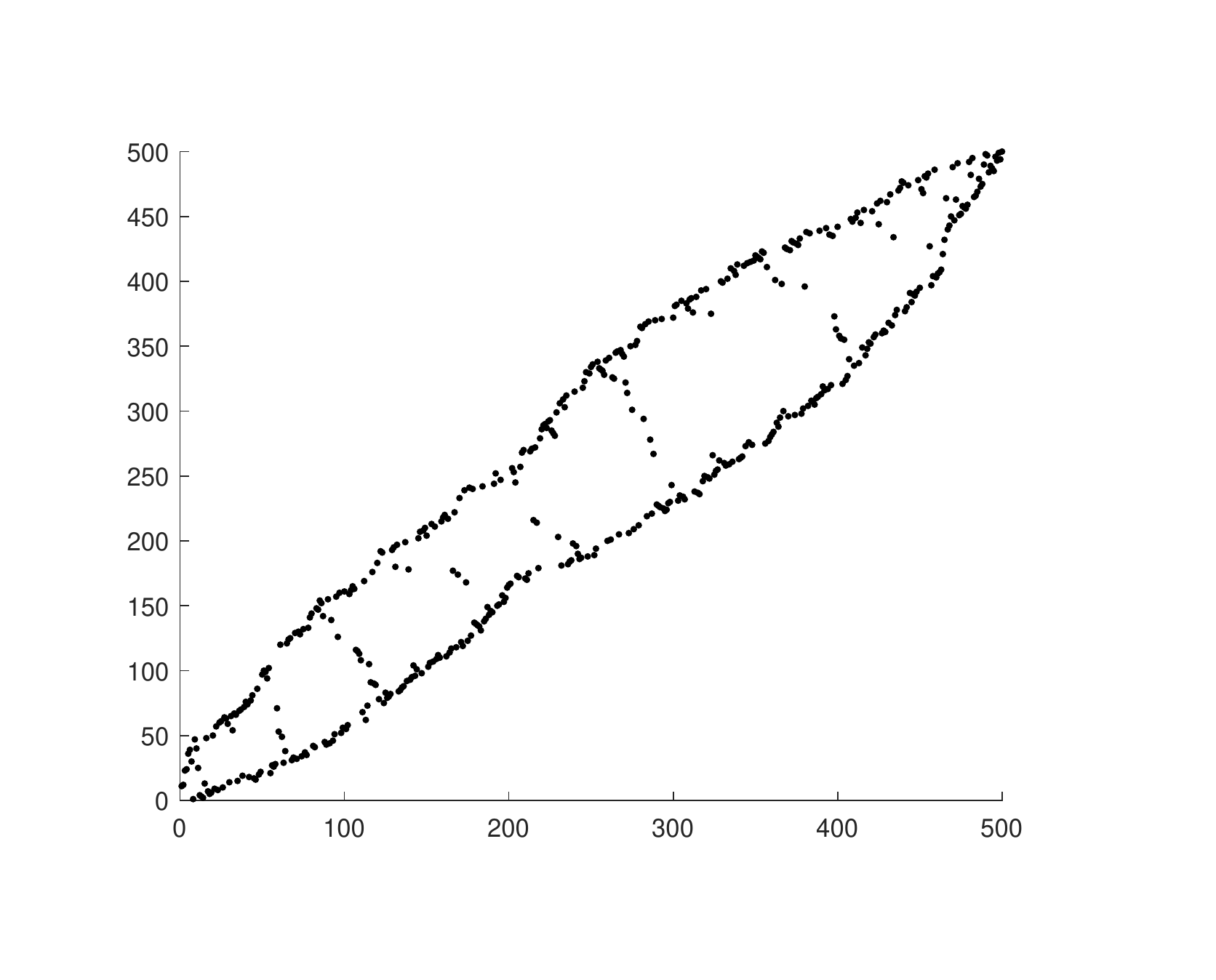}
    \caption{A random 4231-avoiding permutation
    of size 500.  This was generated by 
    G\"{o}khan Y{\i}ld{\i}r{\i}m
    using a Markov chain Monte Carlo algorithm.
    Our motivation for the present work
    was the belief that the part of this plot in 
    the strip $200<x<300$, say, should look 
    like part of the plot of a 4231-avoiding
    bounded affine permutation.}
    \label{fig.4231}
\end{figure}
\begin{figure}
    \centering
    \includegraphics[clip, trim=1cm 0cm 1cm 0cm, scale=0.50]{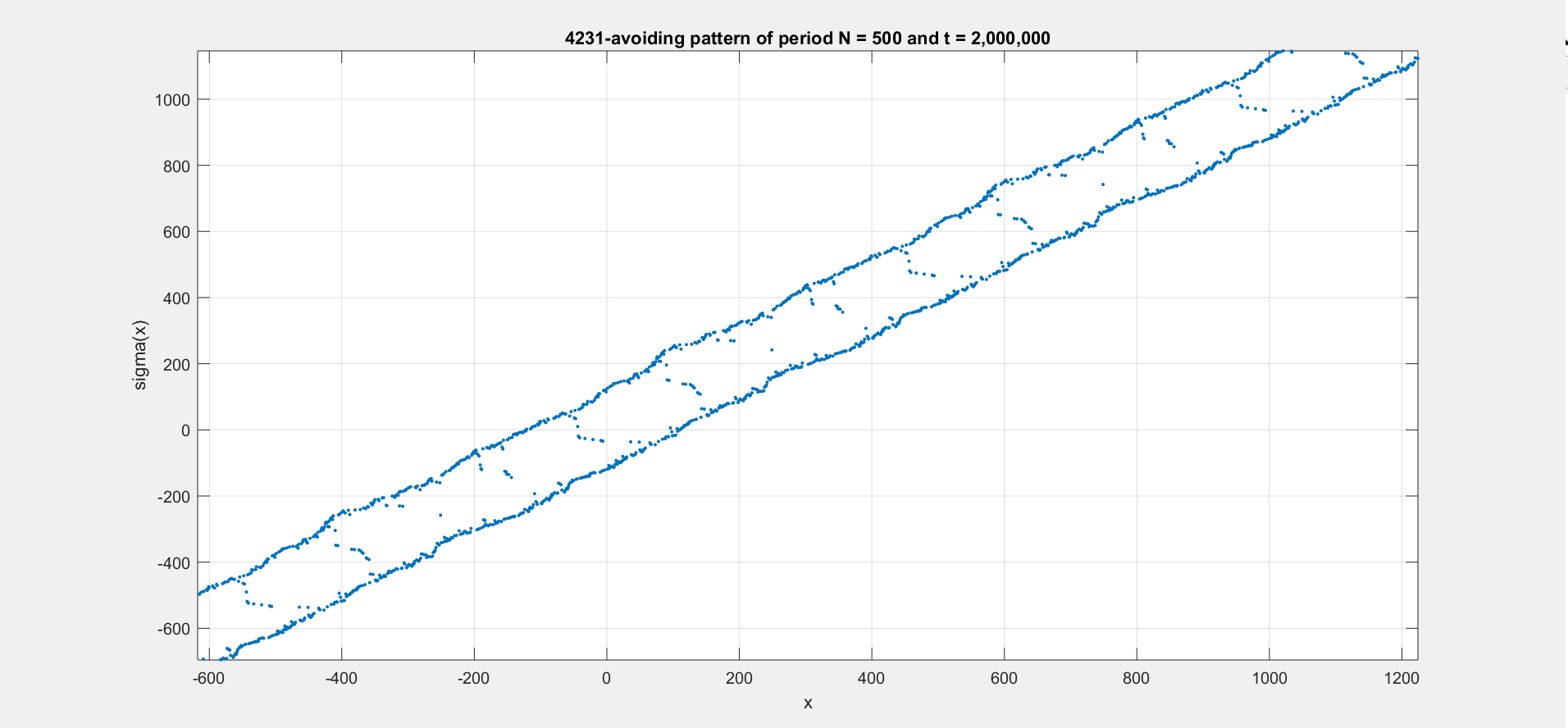}
    \caption{A random 4321-avoiding bounded affine 
    permutation of size 500.  This was generated by
    Quynh Vu using a Markov chain Monte Carlo algorithm, under the supervision of the 
    first author.}
    \label{fig.4231aff}
\end{figure}

\subsection{Definitions and notation}
\label{sec-definitions}

For sequences $\{a_n\}$ and $\{b_n\}$, we write 
$a_n\sim b_n$ to mean $\lim_{n\to\infty} a_n/b_n \,=\, 1$.
For $n\in \mathbb{N}$, we write $[n] = \{1, \ldots, n\}$.
We denote the Euclidean norm by $||\cdot||$.

Affine permutations and bounded affine permutations were defined above. We let $S_N$ denote the set of permutations of size $N$, and we let $\E_N$ denote the set of bounded affine permutations of size $N$. Furthermore, we set $S = \bigcup_{N\ge0} S_N$ and $\E = \bigcup_{n\ge1} \E_N$. 

We represent an ordinary permutation $\sigma\in S_N$ either as a
function $\sigma:[N]\rightarrow [N]$ or as a finite sequence
$\sigma_1\sigma_2\ldots\sigma_N$ where $\sigma_i=\sigma(i)$.
For affine permutations, we only use the function notation.

We begin by introducing concepts that are standard in permutation patterns research. The \emph{diagram} or
\emph{plot} of a permutation $\pi \in S_N$ is the set of points $\{(i, \pi(i))\,:\,i \in [N]\}$. Given permutations $\pi$ and $\tau$, we say that \emph{$\pi$ contains $\tau$ as a pattern}, or simply that \emph{$\pi$ contains $\tau$}, if the diagram of $\tau$ can be obtained by deleting zero or more points from the diagram of $\pi$ (and shrinking corresponding segments of the axes), i.e.\ if $\pi$ has a subsequence whose entries have the same relative order as the entries of $\sigma$. We may also say that two sequences with the same relative order are \emph{order-isomorphic}. We say \emph{$\pi$ avoids $\tau$} if $\pi$ does not contain $\tau$. For instance, for $\pi = 493125876$, the subsequence $9356$ is an occurrence of $\tau = 4123$, but on the other hand $\pi$ avoids $3142$. See Figure \ref{fig:contain}.

\begin{figure}
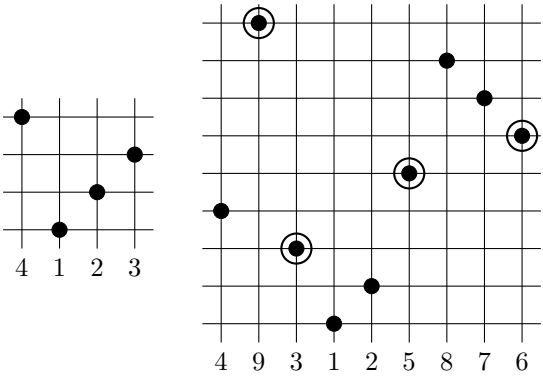

\[ \drawpermutation{4,1,2,3}{4} \hspace{0.25in}
        \drawpattern{4,9,3,1,2,5,8,7,6}{9}{9,3,5,6} \]
    \caption{The permutation $4123$ is contained in the permutation $493125876$.}
    \label{fig:contain}
\end{figure}

If $\tau$ is a permutation, then $\Av{(\tau)}$ denotes the set of all permutations that avoid $\tau$, and $\Av_N{(\tau)}$ is the set of such permutations of size $N$, i.e.\ $\Av_N({\tau}) = \Av{(\tau)} \cap S_N$. The \emph{upper growth rate} of $\Av{(\tau)}$ is defined as $\upgr{\Av{(\tau)}}:= \limsup_{N\to\infty} |\Av_N{(\tau)}|^{1/N}$, and the \emph{lower growth rate} is defined as 
$\logr{\Av{(\tau)}}:=\liminf_{N\to\infty} |\Av_N{(\tau)}|^{1/N}$. If the upper and lower growth rates of $\Av{(\tau)}$ are equal, i.e.\ if $\lim_{N\to\infty} |\Av_N{(\tau)}|^{1/N}$ exists (or is $\infty$), then this number is called the \emph{proper growth rate} of $\Av{(\tau)}$, written $\gr{\Av{(\tau)}}$. 
By the Marcus--Tardos Theorem \cite{MarTar} (formerly the Stanley--Wilf Conjecture), $\Av{(\tau)}$ has a finite upper growth rate for every $\tau$. It is also known that $\Av{(\tau)}$ has a proper growth rate for every $\tau$ (proved by Arratia \cite{Arr}); this growth rate is often called the Stanley--Wilf limit and denoted $L(\tau)$.


We now introduce the analogous concepts for affine permutations. The \emph{diagram} or \emph{plot} of an affine permutation $\omega \in \E_N$ is the set of points $\{(i, \omega(i)) \,:\,i \in \Z\}$. Given an affine permutation $\omega$ and an ordinary permutation $\tau$, we say that \emph{$\omega$ contains $\tau$ as a pattern}, or simply that \emph{$\omega$ contains $\tau$}, if the diagram of $\tau$ can be obtained by deleting some points from the diagram of $\omega$, i.e.\ if $\omega$ has a subsequence whose entries have the same relative order as the entries of $\tau$. We say \emph{$\omega$ avoids $\tau$} if $\omega$ does not contain $\tau$, and we let $\AvBA{(\tau)}$ denote the set of all bounded affine permutations that avoid $\tau$. The idea of an affine permutation containing or avoiding a given ordinary permutation was first used by Crites \cite{Crites}.

We can define $\upgr{\AvBA{(\tau)}}$, $\logr{\AvBA{(\tau)}}$, and $\gr{\AvBA{(\tau)}}$ for bounded affine permutations in the same way as for ordinary permutations, though we do not know whether $\gr{\E(\tau)}$ exists for every ordinary permutation $\tau$, as it does in the setting of ordinary permutation classes. As we noted above, $\upgr{\E(\tau)}$ is always finite.

Note that, if $d \mid N$, then every element of $\E_d$ is also an element of $\E_N$. If $\omega$ is an affine permutation of size $N$, then $N$ need not be the smallest possible size of $\omega$. Thus, for enumeration purposes, our count of affine permutations of size $N$ with a given property includes the affine permutations of size $d$ with that property for $d \mid N$.


\section{Avoiding a decreasing pattern:  Enumeration}

It is well known that a permutation avoids the decreasing pattern $m(m{-}1)\cdots 21$ if and only if it can be 
partitioned into $m{-}1$ increasing subsequences.  It is also true that an affine permutation
avoids $m(m{-}1)\cdots 21$ if and only if it can be partitioned into $m-1$ \emph{periodic} increasing subsequences. Since the number of increasing subsequences is more fundamental to our
development than is the length of the pattern, we shall write $k$ for $m{-}1$ in our work, 
and state our results with $m$ replaced by $k{+}1$. We denote the decreasing permutation of size $k+1$ by $\deck$. That is:

\begin{prop} \label{prop.unionincreasing}
Let $\omega$ be an affine permutation of size $N$, and assume that $N \ge k$. Then $\omega$ avoids $\deck$ if and only $[N]$ can be partitioned into $k$ non-empty sets, $[N] = G_1 \cup \cdots \cup G_k$, such that for each $i$ if $G_i = \{g_{i,1} < g_{i,2} < \cdots < g_{i,n_i}\}$ (where $n_i = |G_i|$) then $\omega(g_{i,1}) < \omega(g_{i,2})  < \cdots < \omega(g_{i,n_i}) < \omega(g_{i,1}+N)$. 
\end{prop}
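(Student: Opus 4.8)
The plan is to establish the equivalence by reducing it to the analogous (and classical) fact for ordinary permutations, using the periodic structure of affine permutations. The key observation is that the condition ``$\omega(g_{i,1}) < \cdots < \omega(g_{i,n_i}) < \omega(g_{i,1}+N)$'' on each block $G_i$ is precisely what it means for the periodic extension of $G_i$ — namely $\widetilde{G_i} := \{g + jN : g \in G_i,\ j \in \Z\}$, listed in increasing order — to form an increasing subsequence of $\omega$ on all of $\Z$. Indeed, by periodicity of $\omega$, knowing the relative order of $\omega$ on one ``fundamental window'' $g_{i,1} < \cdots < g_{i,n_i} < g_{i,1}+N$ determines it on every translate, and the chained inequalities across windows glue together exactly when the stated endpoint condition holds. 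So the proposition is really the statement: \emph{$\omega$ avoids $\deck$ iff $\Z$ can be partitioned into $k$ periodic (i.e.\ translation-by-$N$-invariant) increasing subsequences of $\omega$.}

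For the forward direction, suppose $\omega$ avoids $\deck$. I would like to invoke Dilworth's theorem / the Erdős–Szekeres-type dichotomy: a poset with no antichain of size $k+1$ is a union of $k$ chains. Here the relevant poset is $\Z$ with the order $a \prec b$ iff $a < b$ and $\omega(a) < \omega(b)$; an antichain of size $k+1$ is exactly an occurrence of $\deck$, so avoidance means no antichain of size $k+1$, hence $\Z = C_1 \cup \cdots \cup C_k$ with each $C_i$ a chain (increasing subsequence). The subtlety — and I expect this to be the main obstacle — is that $\Z$ is infinite, so I must (a) justify the chain decomposition for this particular infinite poset, and (b) arrange for the chains to be \emph{periodic}. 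For (a), since $\omega$ has size $N$, the poset is ``locally finite with period $N$'': the dichotomy can be run on the finite quotient or via a compactness/König's lemma argument on finite windows $[-MN, MN]$ and a diagonal limit. For (b), the cleanest route is to work directly with the finite poset $P$ on $[N]$ given by $a \prec b$ iff ($a<b$ and $\omega(a)<\omega(b)$) or ($\omega(a) < \omega(b+N)$ after the appropriate wrap) — more precisely, define a relation on $[N]$ capturing ``$a$ precedes $b$ within one period'' — show its longest antichain has size $\le k$ because an antichain of size $k+1$ lifts to a $\deck$ occurrence in $\omega$, then apply Dilworth to $P$ to get $k$ chains, and finally check each chain, read cyclically, satisfies the endpoint inequality. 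One must verify that $P$ is genuinely a partial order (antisymmetry and transitivity), which is where the boundedness/affine conditions and a careful case analysis enter; transitivity across the ``seam'' at $N$ is the delicate point.

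For the converse, suppose $[N] = G_1 \cup \cdots \cup G_k$ as described. Given any $k+1$ indices $i_1 < i_2 < \cdots < i_{k+1}$ in $\Z$, by pigeonhole two of them, say $i_a < i_b$, lie in the same periodic class $\widetilde{G_j}$. I claim $\omega(i_a) < \omega(i_b)$, which shows $\omega(i_1),\ldots,\omega(i_{k+1})$ is not a decreasing sequence, so $\omega$ avoids $\deck$. To prove the claim: translating by a multiple of $N$ (using $\omega(i+N) = \omega(i)+N$), reduce to the case where $i_a \in \{g_{j,1},\ldots,g_{j,n_j}\}$, i.e.\ $i_a$ is in the fundamental window; then $i_b = g_{j,t} + rN$ for some $t$ and $r \ge 0$ with the pair in the correct cyclic order, and the hypothesis chain $\omega(g_{j,1}) < \cdots < \omega(g_{j,n_j}) < \omega(g_{j,1}+N)$ together with $\omega(\cdot + N) = \omega(\cdot) + N$ gives monotonicity all the way around, so $\omega(i_a) < \omega(i_b)$. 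This direction is essentially bookkeeping with the indices mod $N$.

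Finally, I would remark that the hypothesis $N \ge k$ guarantees a partition into $k$ \emph{non-empty} sets is possible in the ``iff'' (one cannot have $k$ non-empty blocks of $[N]$ if $N < k$), and that when $\omega$ avoids $\deck$ and $N \ge k$ the $k$ chains produced by Dilworth can be padded to be non-empty (each singleton $\{j\}$ trivially satisfies the endpoint condition since $\omega(j) < \omega(j) + N = \omega(j+N)$), so non-emptiness is free.
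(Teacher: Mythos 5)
Your ``converse'' (decomposition $\Rightarrow$ avoidance) is fine and matches the paper's: pigeonhole forces two of any $k+1$ indices into the same periodic block, and periodicity plus the endpoint inequality forces those two to be increasing.

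The direction you call ``forward'' (avoidance $\Rightarrow$ decomposition) has a genuine gap, and you half-see it yourself. Dilworth gives you \emph{some} partition of the poset into $k$ chains, but nothing in Dilworth's theorem lets you choose that partition to be invariant under translation by $N$. Running Dilworth on $\Z$ via compactness produces a chain decomposition with no periodicity guarantee; and even on $[N]$ alone you still have to verify the wrap inequality $\omega(g_{i,n_i})<\omega(g_{i,1}+N)$, which an arbitrary Dilworth chain need not satisfy. Your proposed repair --- a ``wrap-around'' relation on $[N]$ --- is not a partial order. Already for $\omega=\oplus(231)$ with $N=3$: we have $1<2$ and $\omega(1)=2<\omega(2)=3$, so $1\prec 2$; but also $\omega(2)=3<\omega(1+3)=5$, so the wrap gives $2\prec 1$. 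Antisymmetry fails, and transitivity fails similarly, so Dilworth simply does not apply to that relation. This is not a ``delicate point'' to be checked; it is where the approach breaks.

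The paper sidesteps all of this by choosing a \emph{canonical} chain decomposition rather than appealing to an existence theorem: define the rank of $a\in\Z$ as the maximum length of a decreasing subsequence of $\omega$ beginning at $a$. Avoidance of $\deck$ caps ranks at $k$; same-rank elements automatically form increasing subsequences (the usual Erd\H{o}s--Szekeres / Mirsky-type argument); and, crucially, $\omega(a+N)=\omega(a)+N$ makes rank $N$-periodic on the nose, so the sets $\widetilde{G}_i$ are translation-invariant and $G_i=\widetilde{G}_i\cap[N]$ inherits the wrap inequality for free. Your final remark about padding singletons to reach $k$ non-empty blocks is correct and is also how the paper handles non-emptiness. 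If you want to keep the Dilworth framing, you would need to prove a periodic-equivariant Dilworth lemma; it is much simpler to use the rank construction, which is canonical and therefore periodic by construction.
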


\begin{proof}
Just as in the case of ordinary permutations, it is clear that, if there exists a partition $[N] = G_1 \cup \cdots \cup G_k$ satisfying the conditions from the proposition statement, then $\omega$ avoids $\deck$: indeed, the positions in an occurrence of $\deck$ would have to be in $k+1$ different increasing subsequences.

The converse is proved by the same method as in the classical version for ordinary permutations (see \cite[Thm.\ 4.10]{BonaCP}). Assume $\omega$ avoids $\deck$. For each $a \in \Z$, define the \emph{rank} of $a$ (in $\omega$) to be the maximum number $r$ such that $a$ is the start of a sequence of $r$ integers $a = a_1 < a_2 < \cdots < a_r$ such that $\omega(a_1) > \omega(a_2) > \cdots > \omega(a_r)$. That is, the rank of $a$ is the maximum length of a decreasing subsequence of $\omega$ that begins with position $a$. Since $\omega$ avoids $\deck$, every integer has rank $r$ satisfying $1 \le r \le k$. By the definition of affine permutation, $a$ and $a+N$ have the same rank for all $a \in \Z$.

For each $i \in [k]$ define $\widetilde{G}_i$ to be the set of integers of rank $i$. Then $\Z = \widetilde{G}_1 \cup \cdots \cup \widetilde{G}_k$ is a partition of $\Z$, possibly with some blocks empty, with the property that $a \in \widetilde{G}_i$ if and only if $a+N \in \widetilde{G}_i$. For each $i$ such that $\widetilde{G}_i$ is non-empty, $\{\omega(a)\}_{a \in \widetilde{G}_i}$ is a doubly infinite increasing subsequence of $\omega$ (meaning if $a,a' \in \widetilde{G}_i$ and $a < a'$ then $\omega(a) < \omega(a')$).

Finally, if we define $G_i = \widetilde{G}_i \cap [N]$, then $[N] = G_1 \cup \cdots \cup G_k$ is a partition of $[N]$ satisfying the conditions given in the proposition statement, except that some $G_i$ may be empty. This last detail can be corrected by removing the empty blocks and subdividing the non-empty blocks until there are exactly $k$ of them (this is possible because $N \ge k$).
\end{proof}


Here is the first of the two main theorems of this paper.  
Everything is trivial for $k=1$, 
so in the rest of the paper we shall always assume $k\geq 2$.

\begin{thm}
   \label{thm.E4321}
Fix $k \ge 1$. As $N \to \infty$,
\begin{equation}
    \label{eq.E4321}
      |  {\E}_N(\deck)|   \;\sim\;   k^{2N} \left( \frac{N}{4\pi}\right)^{(k-1)/2} \frac{Z^*_k}{k^{k/2}(k-1)!}  
\end{equation}
where 
\begin{equation}
  \label{eq.Zstardef}
       Z^*_k   \;=\;  \frac{1}{(k-1)!}\sum_{j=0}^{\lfloor k/2 \rfloor}  (-1)^j  \binom{k}{j}\left(  k-2j\right)^{k-1}.
\end{equation}
\end{thm}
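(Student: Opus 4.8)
## Proof Proposal for Theorem \ref{thm.E4321}

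\textbf{Sketch of the argument.} The plan is to pass from $\omega\in\E_N(\deck)$ to the combinatorial data supplied by Proposition~\ref{prop.unionincreasing} --- a decomposition of $\Z$ into $k$ periodic increasing subsequences, or ``lanes'' --- to count those data, and then to do asymptotics. A lane with $n_i$ points per period is the increasing bijection from one periodic set $\widetilde G_i$ of positions (with $|\widetilde G_i\cap[N]|=n_i$) onto another periodic set $\widetilde H_i$ of values of the same density; so a $\deck$-avoiding bounded affine permutation of size $N$ is encoded by a composition $n_1+\dots+n_k=N$, an ordered set-partition $[N]=G_1\sqcup\dots\sqcup G_k$ of the \emph{positions} with $|G_i|=n_i$, an ordered set-partition of the \emph{values} into parts of the same sizes, and an integer ``offset'' for each lane recording which lift of the cyclic matching of $\widetilde G_i$ with $\widetilde H_i$ is taken. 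The boundedness hypothesis $|\omega(i)-i|<N$ forces each lane into a diagonal band, which couples the position- and value-partitions (they must be cyclically close) and pins down the offset up to $O(1)$ choices; the centering condition (ii) of Definition~\ref{def.affine} is one further linear equation on the lane heights. The first task is to make this encoding exact. Since a given $\omega$ may admit several such decompositions, I would either work with the canonical rank decomposition from the proof of Proposition~\ref{prop.unionincreasing} and characterize which pairs of colorings arise, or --- more robustly --- count ordered decompositions into \emph{at most} $k$ lanes and strip off the over-counting by inclusion--exclusion over the partition lattice. Either way the outcome should be an exact identity of the shape
\[
  |\E_N(\deck)| \;=\; \sum_{n_1+\dots+n_k=N} c_{\mathbf n}\,\binom{N}{n_1,\dots,n_k}\, \#\bigl\{\text{value-colorings and heights compatible with }\mathbf n\bigr\},
\]
with $c_{\mathbf n}$ a bounded inclusion--exclusion weight. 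I expect the inclusion--exclusion over ``which lanes are forced to coincide near a generic column'' to be precisely what produces the alternating sum $\sum_j(-1)^j\binom kj(k-2j)^{k-1}$ appearing in $Z^*_k$; a useful reality check is that, up to explicit constants, $Z^*_k$ is the $(k-1)$-dimensional volume of the central slice $\{(\delta_1,\dots,\delta_k)\in[-1,1]^k:\sum_i\delta_i=0\}$ of the cube (by the classical hypercube-slice volume formula), which is exactly the space of $y$-intercepts in the permuton picture of Figure~\ref{fig.321limit}.

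For the asymptotics I would first show, via Chernoff/large-deviation bounds on the multinomial coefficients, that all the mass comes from \emph{balanced} compositions $n_i = N/k + O(\sqrt N)$ and from lanes whose (rescaled) heights $\delta_i=a_i/N$ lie in a band of width $\Theta(N)$, so the sum may be restricted to that regime with negligible error. On that regime I would apply Stirling's formula to each $\binom{N}{\mathbf n}$ --- giving a Gaussian in $\mathbf n-(N/k)\mathbf 1$ of the usual multinomial shape --- and replace the count of admissible value-colorings-and-heights by its leading term, essentially a second factor $\binom{N}{\mathbf n}$ times the ``band volume''. The product of the two multinomial Gaussians contributes $k^{2N}$ together with a factor $N^{-(k-1)}$; the heights, running over a $(k-1)$-dimensional lattice of spacing $1$ cut out by the centering equation, contribute $N^{k-1}$ times the cube-slice volume, cancelling the $N^{-(k-1)}$; and the remaining $(k-1)$-dimensional sum over $\mathbf n$ of a Gaussian of width $\Theta(\sqrt N)$ contributes the final factor $N^{(k-1)/2}$. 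Tracking the constants --- the $(2\pi)^{(k-1)/2}$ and covariance-determinant factors from each multinomial local limit law, which together make the $(4\pi)^{(k-1)/2}$, the $k^{k/2}$ from the variances, and the $(k-1)!$ and $Z^*_k$ from the combinatorial weight --- should reassemble exactly into (\ref{eq.E4321}).

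The main obstacle I anticipate is the first step: setting up the exact encoding and, in particular, getting the over-counting inclusion--exclusion to collapse cleanly to the constant $Z^*_k$. The boundedness condition interacts with the lane decomposition in a somewhat delicate way --- a single $\omega$ can have several rank-compatible colorings once two lanes run close together, and the heights and offsets must be disentangled from the purely cyclic data --- so the bookkeeping that turns ``number of decompositions'' into ``number of permutations'' is where the real work lies. By comparison the analytic step, while technical, is essentially routine: it is a fixed-dimension local central limit computation, and the only care needed is uniform (not merely pointwise) control of the Stirling approximations across the $O(\sqrt N)$-wide window of compositions, together with the tail estimates used to discard the unbalanced terms.
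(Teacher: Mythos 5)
Your encoding --- composition $\mathbf n$, position-partition $\collec G$, value-partition $\collec H$, offsets $\collec\Delta$ with $\sum\Delta_i=0$ and $|\Delta_i|\le n_i$ --- is exactly the paper's map $\Psi$, and your rough accounting of the asymptotic factors ($k^{2N}$, the $N^{(k-1)/2}$, the cube-slice volume) points in the right direction. But there is a genuine gap at the step you yourself flag as the hard part, and I think your proposed fix is the wrong one. The paper does \emph{not} attempt an exact identity, and with good reason: the map $\Psi$ can be far more than $k!$-to-one when the lanes run close together, and an inclusion--exclusion ``over which lanes coincide'' is not a sum over a clean poset (lanes are increasing subsequences, and a single $\sigma$ can be split into $k$ increasing pieces in many inequivalent ways, not merely by relabeling or by merging/splitting cells). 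Indeed the paper explicitly observes that for ordinary permutations the crude upper bound $\frac{1}{k!}\sum_{\mathbf n}\binom{N}{\mathbf n}^2$ is off by a full factor of $N$ in the case $k=2$, so in general the overcounting is not a constant and cannot be stripped off by a fixed-dimensional inclusion--exclusion. The actual argument is an asymptotic sandwich: since permuting the lane labels always gives $k!$ preimages, $|\E_N(\deck)|\le \frac{1}{k!}\sum_{\mathbf n}\binom{N}{\mathbf n}^2 Z(\mathbf n)$; and for the matching lower bound one restricts $\Psi$ to a subdomain (balanced $\mathbf n$, offsets bounded away from $\pm n_i$, $G_i$ and $H_i$ nearly equispaced, and the scaled intercepts $\Delta_i N/n_i$ pairwise separated by more than a strip-width) on which the $k$ diagonal strips are geometrically disjoint, so the lane decomposition of $\Psi(\vec v)$ is unique up to relabeling and $\Psi$ is exactly $k!$-to-one there, and then one shows that this subdomain carries asymptotically all of the weight. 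That geometric separation is the phenomenon special to the affine--bounded setting that makes the naive count tight, and it is the idea your proposal is missing.

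One smaller correction: the alternating sum $Z^*_k=\frac{1}{(k-1)!}\sum_j(-1)^j\binom kj(k-2j)^{k-1}$ does not arise from an inclusion--exclusion over coinciding lanes; it is (a normalization of) the number of integer points of the slice $\{\collec\Delta:\sum\Delta_i=0,\;|\Delta_i|\le n\}$, i.e.\ Andr\'e's formula for the central coefficient of $\prod_i(1+x+\dots+x^{2n_i})$. Your ``reality check'' (that $Z^*_k$ is, up to normalization, the $(k-1)$-volume of the central cube slice, equivalently the Irwin--Hall density at its midpoint) is the correct interpretation, not a sanity check on a different mechanism. Likewise, boundedness does not pin $\Delta_i$ down to $O(1)$ choices; it confines $\Delta_i$ to a window of length $\Theta(N)$, which is precisely what supplies the $N^{k-1}$ factor you later use. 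Once you replace the exact-enumeration plan with the upper/lower sandwich above, the rest of your analytic outline (Chernoff to localize to balanced $\mathbf n$, local CLT/Stirling for $\binom N{\mathbf n}$, Andr\'e for $Z$) lines up with the paper's proof.
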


\begin{rem}  \label{rem.Zk}
We note that $Z^*_k/2^{k-1}$ is the value of the probability density function
of the sum of $k$ independent uniform random variables on 
$[0,1]$ evaluated at its midpoint, $k/2$; see ``Irwin--Hall distribution'' in \cite[Sec.\ 26.9, Eq.\ (26.48)]{JKB}.
We easily compute $Z_1^*=1$, $Z_2^*=2$, $Z_3^*=3$, $Z_4^*=16/3$, 
and $Z_5^*=115/12$.
\end{rem}

For instance, for $k=2$ this becomes
\[ |\E_N(321)| \sim \sqrt{\frac{N}{4\pi}} \cdot 4^N, \]
and for $k=3$ we obtain
\[ |\E_N(4321)| \sim \frac{N}{8\pi\sqrt{3}} \cdot 9^N. \]
For every $k$, the proper growth rate of $\E(\deck)$ is $k^2$, the same as for ordinary permutations avoiding $\deck$.
More precisely, Regev \cite{Regev} showed that for the latter,
\[    |S_N(\deck)|  \;\sim\; I_k\, k^{2N} N^{-(k^2-1)/2}
\]
where 
\[  I_k \;=\;  \frac{1}{(2\pi)^{k}(k+1)^{(k+1)^2}(k+1)!}
    \int_{\R^{k+1}}
   \prod_{i<j}(x_i-x_j)^2 e^{-(k+1)(x_1^2+\cdots+x_{k+1}^2)}
   \,dx_1\cdots dx_{k+1} \,.
\]
We remark that $|\E_N(\deck)|\,/\, |S_N(\deck)|$ is asymptotically
proportional to $N^{(k^2+k-2)/2}$ as $N\to\infty$.

\subsection{The setup}

Here is the setup that we will use to prove Theorem \ref{thm.E4321}, relying on the characterization from Proposition \ref{prop.unionincreasing} that a permutation avoids $\deck$ if and only if it can be expressed as
the union of $k$ increasing subsequences.

Given positive integers $n_1,\ldots,n_k$ whose sum is $N$, let $\{G_1,\ldots,G_k\}$ 
and $\{H_1,\ldots,H_k\}$ be two partitions of $\{1,\ldots,N\}$ such that $|G_i|=|H_i|=n_i>0$ for each $i\in [k]$.
For each $i$, write the elements of the sets $G_i$ and $H_i$ as
\begin{align}
   \nonumber
   G_i  \;=\; \{g_{i,1},g_{i,2},\ldots,g_{i,n_i}\}  & \hspace{5mm}\hbox{where}\hspace{5mm}
   g_{i,1}\,<\,g_{i,2} \,< \,\ldots \,<\, g_{i,n_i}   \,,
   \\
   \label{eq.horder}
  H_i  \;=\; \{h_{i,1},h_{i,2},\ldots,h_{i,n_i}\}  & \hspace{5mm}\hbox{where}\hspace{5mm}
   h_{i,1}\,<\,h_{i,2} \,< \,\ldots \,<\, h_{i,n_i}   \, .
\end{align}
Finally, let $\Delta_1,\ldots,\Delta_k$ be integers such that 
\begin{equation}
    \label{eq.Deltasum}
   \Delta_i \in[-n_i,n_i] \quad (i\in[k])
   \hspace{5mm}\mbox{and}\hspace{5mm}
      \sum_{i=1}^k\Delta_i \;=\; 0 \,. 
\end{equation}
To shorten the notation,  we shall write $\collec{n}$ to represent the ordered $k$-tuple $(n_1, \ldots, n_k)$, and similarly for 
$\collec{G}$, $\collec{H}$, and $\collec{\Delta}$. The procedure described in the next several paragraphs will define a function $\Psi$ whose domain $\mathcal{D}_0(N)$ is the set of all $(4k)$-tuples $(\collec{n}, \collec{G}, \collec{H}, \collec{\Delta})$ that satisfy the conditions just described, and whose codomain contains ${\E}_N(\deck)$. 
The correspondence $\Psi$ is the key to our main theorem, as
we shall outline soon. 

At this point it is useful to pause and observe that we can use $\collec{G}$ and $\collec{H}$ to define an ordinary permutation 
$\sigma$ in ${\cal S}_N(\deck)$ by specifying 
\begin{equation}
    \label{eq.ghord}
     \sigma(g_{i,j}) \;=\;h_{i,j}    \hspace{5mm}\mbox{for $j=1,\ldots,n_i$ and $i\in [k]$}.    
\end{equation}
See Figure \ref{fig.ABAB}.
\begin{figure}
\centering
\begin{tikzpicture}[scale=0.5]
\draw[fill=blue!30] (1,0) rectangle (4,1);
\draw[fill=blue!30] (1,3) rectangle (4,5);
\draw[fill=blue!30] (1,6) rectangle (4,9);
\draw[fill=blue!30] (6,0) rectangle (8,1);
\draw[fill=blue!30] (6,3) rectangle (8,5);
\draw[fill=blue!30] (6,6) rectangle (8,9);
\draw[fill=blue!30] (9,0) rectangle (10,1);
\draw[fill=blue!30] (9,3) rectangle (10,5);
\draw[fill=blue!30] (9,6) rectangle (10,9);
\draw[fill=green!52] (0,1) rectangle (1,3);
\draw[fill=green!52] (0,5) rectangle (1,6);
\draw[fill=green!52] (0,9) rectangle (1,10);
\draw[fill=green!52] (4,1) rectangle (6,3);
\draw[fill=green!52] (4,5) rectangle (6,6);
\draw[fill=green!52] (4,9) rectangle (6,10);
\draw[fill=green!52] (8,1) rectangle (9,3);
\draw[fill=green!52] (8,5) rectangle (9,6);
\draw[fill=green!52] (8,9) rectangle (9,10);
\draw[blue] (0,0) grid (10,10);
\draw[very thick, scale=10] (0,0) grid (1,1);
\node[teal] at (0.5,-0.5){A};
\node[blue] at (1.5,-0.5){B};
\node[blue] at (2.5,-0.5){B};
\node[blue] at (3.5,-0.5){B};
\node[teal] at (4.5,-0.5){A};
\node[teal] at (5.5,-0.5){A};
\node[blue] at (6.5,-0.5){B};
\node[blue] at (7.5,-0.5){B};
\node[teal] at (8.5,-0.5){A};
\node[blue] at (9.5,-0.5){B};
\node[blue] at (-0.5,0.5){B};
\node[teal] at (-0.5,1.5){A};
\node[teal] at (-0.5,2.5){A};
\node[blue] at (-0.5,3.5){B};
\node[blue] at (-0.5,4.5){B};
\node[teal] at (-0.5,5.5){A};
\node[blue] at (-0.5,6.5){B};
\node[blue] at (-0.5,7.5){B};
\node[blue] at (-0.5,8.5){B};
\node[teal] at (-0.5,9.5){A};
%
\draw[fill] (0.5,1.5) circle [radius=0.1];
\draw[fill] (4.5,2.5) circle [radius=0.1];
\draw[fill] (5.5,5.5) circle [radius=0.1];
\draw[fill] (8.5,9.5) circle [radius=0.1];
\draw[fill,red] (1.5,0.5) circle [radius=0.1];
\draw[fill,red] (2.5,3.5) circle [radius=0.1];
\draw[fill,red] (3.5,4.5) circle [radius=0.1];
\draw[fill,red] (6.5,6.5) circle [radius=0.1];
\draw[fill,red] (7.5,7.5) circle [radius=0.1];
\draw[fill,red] (9.5,8.5) circle [radius=0.1];

\end{tikzpicture}

\caption{The case $k=2$:  Constructing a \textbf{321}-avoiding 
ordinary permutation of size 10 from Equation (\ref{eq.ghord}).  
Here $n_1=4$ and $n_2=6$.
Each A on the horizontal (respectively, vertical) axis indicates a 
member of $G_1$ (respectively, $H_1$), while the B's indicate members
of $G_2$ (and $H_2$).  The dots are the points of the
plot, as we pair the $j^{th}$ element of $G_i$ with the $j^{th}$ 
element of $H_i$.  The shaded squares (green for $G_1\times H_1$
and blue for $G_2\times H_2$) will be needed when 
we extend this construction to affine permutations in Figure 
\ref{fig.ABABaffine}.}
   \label{fig.ABAB}

\end{figure}
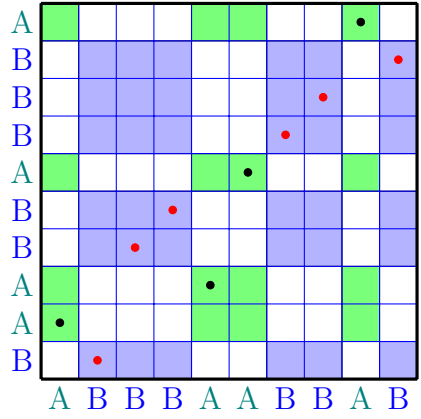 
Every permutation in ${\cal S}_N(\deck)$ can be created this way, but not uniquely.  One obvious
source of non-uniqueness
is that we can permute the subscripts of $n_i$, $G_i$, and $H_i$ in $k!$ ways and get the 
same $\sigma$.  This leads to the bound
\begin{equation}
   \label{eq.S4321bd}
    |{\cal S}_N(\deck)|   \;\leq\;   \frac{1}{k!}  \sum_{\substack{n_1,\ldots,n_k\geq 1\\n_1+\cdots+n_k=N}}
      \left( \begin{array}{c}  N  \\ n_1,\,n_2,\,\ldots,\,n_k \end{array}  \right)^2.
\end{equation}
We note that the permutation of subscripts is not the only reason that the above association is not unique. For example, we can get the identity permutation by taking $H_i=G_i$ for  any choice of 
$\{G_1,\ldots,G_k\}$.  Also, notice that if $\sigma(1)=1$ and $1\in G_1$, say, then moving the 
element 1 from $G_1$ to  $G_2$ and moving 1 from $H_1$ to $H_2$ gives a different decomposition
of the same $\sigma$ into $k$ increasing parts.  
For the case $k=2$, the upper bound of Equation (\ref{eq.S4321bd})
becomes $\frac{1}{2}\sum_{k=1}^{N-1}\binom{N}{k}^2 \,=\, \frac{1}{2}\binom{2N}{N}-1$, which is an order of $N$ larger
than the correct answer, $|{\cal S}_N(321)|=\binom{2N}{N}/(2N+1)$
(e.g.\ Corollary 4.7 of \cite{BonaCP}).  In contrast, the analogous 
bound that we shall derive for $|\E_N(\deck)|$ will be asymptotically
exact.

Now we describe the procedure that defines the function $\Psi$, which will take a $(4k)$-tuple $(\collec{n}, \collec{G}, \collec{H}, \collec{\Delta})$ in its domain  $\mathcal{D}_0(N)$
and use it to construct an affine permutation.
The asymptotic upper bound of Section \ref{sec.upperbound}
comes from the fact
that each permutation in $\E_N(\deck)$ has at least $k!$
preimages in $\mathcal{D}_0(N)$ under $\Psi$ (this leads to Equation
(\ref{eq.4upbound})).  
The matching asymptotic lower bound of Section \ref{sec.lowerbound}
relies on finding a slightly 
smaller domain $\Dom$ (depending on $N$ as well as some
other parameters) that $\Psi$ maps into $\E_N(\deck)$, and on 
which $\Psi$ is exactly $k!$-to-one.  Indeed, plots such as those
suggested by Figure \ref{fig.321} are images of members of 
$\Dom$.

We first define some notation as well as
the domain $\mathcal{D}_0(N)$ of $\Psi$.

\begin{defi} 
   \label{def.domains}
(a) For natural numbers $w$ and $N$, let $\Seq(w,N)$ be the set 
of all
$w$-element subsets of $[N]$.  We shall typically identify such 
a set as an increasing subsequence of $1,2,\ldots,N$, as we do 
in Equation (\ref{eq.horder}).
\\
(b) For $\collec{n}\in \mathbb{N}^k$, let 
\[   D_{\Delta}(\collec{n})  \;=\;  \left\{  \collec{\Delta}\in 
   \mathbb{Z}^k  \,:\,  |\Delta_i|\leq n_i \quad \forall i\in [k],
    \textup{ and }\sum_{i=1}^k\Delta_i=0 
    \right\} \,.
\]
Also, let $Z(n_1,\ldots,n_k)=|D_{\Delta}(\collec{n})|$.
\\
(c) For $N\in \mathbb{N}$, define the following set of ($4k$)-tuples: \begin{eqnarray*}
   \mathcal{D}_0(N) & = &   \left\{
   (\collec{n},\collec{G},\collec{H},\collec{\Delta})\,:\, 
   \collec{n}\in \mathbb{N}^k, \;\sum_{i=1}^kn_i=N, \;
    \collec{\Delta}\in D_{\Delta}(\collec{n}), \, \right.  \\
    & &  \hspace{5mm}  \left. \collec{G} \textup{ and $\collec{H}$
      are partitions of $[N]$ such that $|G_i|=|H_i|=n_i$ }
      \forall i\in [k] \,
     \rule{0cm}{0.6cm}  \right\}
\end{eqnarray*}
\end{defi}

We now explain how to define $\Psi$ on the domain
$\mathcal{D}_0(N)$.
Let $(\collec{n},\collec{G},\collec{H},\collec{\Delta})\in 
\mathcal{D}_0(N)$.
For each $i$,  
extend the definition of $g_{i,j}$ and $h_{i,j}$ from 
Equation (\ref{eq.horder}) to all integers $j$ periodically, i.e.
\begin{equation}
    \label{eq.hperiodic}
  g_{i,j+tn_i} \;:=\;   g_{i,j}  \,+\, tN  \hspace{5mm}\hbox{and}\hspace{5mm}
          h_{i,j+tn_i} \;:=\;   h_{i,j}  \,+\, tN      \hspace{5mm} \hbox{for $j\in [n_i]$, $t\in \mathbb{Z}$}.
\end{equation}
Observe that 
\begin{equation}
    \label{eq.hincr}
    g_{i,j+tn_i} \, , \,  h_{i,j+tn_i}    \;\in\; [1+tN,N+tN]     \hspace{5mm} \hbox{for $j\in [n_i]$, $t\in \mathbb{Z}$}.
\end{equation}  
In particular, for each $i$, we see that $g_{i,j}$ is a strictly increasing function of $j\in \mathbb{Z}$, and that 
for each $x \in \mathbb{Z}$ there is a unique choice of $i$ and $j$ such that $g_{i,j} = x$ 
(and similarly for $h_{i,j}$).  
We define $\Psi(\collec{n}, \collec{G}, \collec{H},\collec{\Delta})$
to be the function $\sigma$ given by
\begin{equation}
   \label{eq.sigma4def}
        \sigma(g_{i,j})   \;=\;      h_{i,j+\Delta_i}    \hspace{5mm}  \hbox{for $j\in \mathbb{Z}$, $i\in[k]$.}
\end{equation}
See Figure \ref{fig.ABABaffine}.
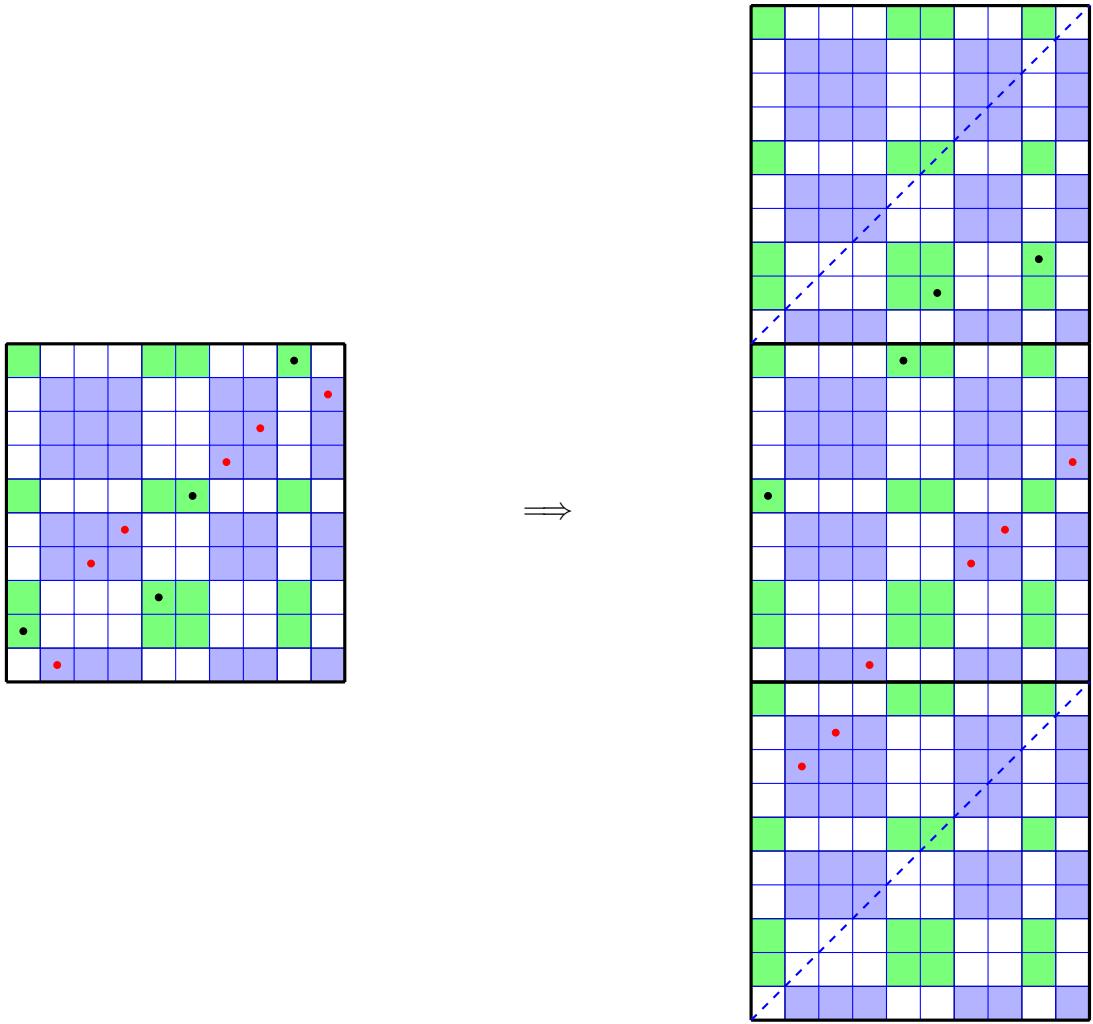
\begin{figure}
\begin{center}   
\begin{tikzpicture}[scale=0.45]       

\draw[fill=blue!30] (1,0) rectangle (4,1);
\draw[fill=blue!30] (1,3) rectangle (4,5);
\draw[fill=blue!30] (1,6) rectangle (4,9);
\draw[fill=blue!30] (6,0) rectangle (8,1);
\draw[fill=blue!30] (6,3) rectangle (8,5);
\draw[fill=blue!30] (6,6) rectangle (8,9);
\draw[fill=blue!30] (9,0) rectangle (10,1);
\draw[fill=blue!30] (9,3) rectangle (10,5);
\draw[fill=blue!30] (9,6) rectangle (10,9);
\draw[fill=green!52] (0,1) rectangle (1,3);
\draw[fill=green!52] (0,5) rectangle (1,6);
\draw[fill=green!52] (0,9) rectangle (1,10);
\draw[fill=green!52] (4,1) rectangle (6,3);
\draw[fill=green!52] (4,5) rectangle (6,6);
\draw[fill=green!52] (4,9) rectangle (6,10);
\draw[fill=green!52] (8,1) rectangle (9,3);
\draw[fill=green!52] (8,5) rectangle (9,6);
\draw[fill=green!52] (8,9) rectangle (9,10);
\draw[blue] (0,0) grid (10,10);
\draw[very thick, scale=10] (0,0) grid (1,1);
\draw[fill] (0.5,1.5) circle [radius=0.1];
\draw[fill] (4.5,2.5) circle [radius=0.1];
\draw[fill] (5.5,5.5) circle [radius=0.1];
\draw[fill] (8.5,9.5) circle [radius=0.1];
\draw[fill,red] (1.5,0.5) circle [radius=0.1];
\draw[fill,red] (2.5,3.5) circle [radius=0.1];
\draw[fill,red] (3.5,4.5) circle [radius=0.1];
\draw[fill,red] (6.5,6.5) circle [radius=0.1];
\draw[fill,red] (7.5,7.5) circle [radius=0.1];
\draw[fill,red] (9.5,8.5) circle [radius=0.1];

\node at (16,5){$\Longrightarrow$};

\begin{scope}[shift={(22,-10)}]

\draw[fill=blue!30] (1,0) rectangle (4,1);
\draw[fill=blue!30] (1,3) rectangle (4,5);
\draw[fill=blue!30] (1,6) rectangle (4,9);
\draw[fill=blue!30] (6,0) rectangle (8,1);
\draw[fill=blue!30] (6,3) rectangle (8,5);
\draw[fill=blue!30] (6,6) rectangle (8,9);
\draw[fill=blue!30] (9,0) rectangle (10,1);
\draw[fill=blue!30] (9,3) rectangle (10,5);
\draw[fill=blue!30] (9,6) rectangle (10,9);
\draw[fill=green!52] (0,1) rectangle (1,3);
\draw[fill=green!52] (0,5) rectangle (1,6);
\draw[fill=green!52] (0,9) rectangle (1,10);
\draw[fill=green!52] (4,1) rectangle (6,3);
\draw[fill=green!52] (4,5) rectangle (6,6);
\draw[fill=green!52] (4,9) rectangle (6,10);
\draw[fill=green!52] (8,1) rectangle (9,3);
\draw[fill=green!52] (8,5) rectangle (9,6);
\draw[fill=green!52] (8,9) rectangle (9,10);
\draw[blue] (0,0) grid (10,10);
\draw[very thick, scale=10] (0,0) grid (1,1);
\draw[fill=blue!30] (1,10) rectangle (4,11);
\draw[fill=blue!30] (1,13) rectangle (4,15);
\draw[fill=blue!30] (1,16) rectangle (4,19);
\draw[fill=blue!30] (6,10) rectangle (8,11);
\draw[fill=blue!30] (6,13) rectangle (8,15);
\draw[fill=blue!30] (6,16) rectangle (8,19);
\draw[fill=blue!30] (9,10) rectangle (10,11);
\draw[fill=blue!30] (9,13) rectangle (10,15);
\draw[fill=blue!30] (9,16) rectangle (10,19);
\draw[fill=green!52] (0,11) rectangle (1,13);
\draw[fill=green!52] (0,15) rectangle (1,16);
\draw[fill=green!52] (0,19) rectangle (1,20);
\draw[fill=green!52] (4,11) rectangle (6,13);
\draw[fill=green!52] (4,15) rectangle (6,16);
\draw[fill=green!52] (4,19) rectangle (6,20);
\draw[fill=green!52] (8,11) rectangle (9,13);
\draw[fill=green!52] (8,15) rectangle (9,16);
\draw[fill=green!52] (8,19) rectangle (9,20);
\draw[blue] (0,10) grid (10,20);
\draw[very thick, scale=10] (0,1) grid (1,2);
\draw[fill=blue!30] (1,20) rectangle (4,21);
\draw[fill=blue!30] (1,23) rectangle (4,25);
\draw[fill=blue!30] (1,26) rectangle (4,29);
\draw[fill=blue!30] (6,20) rectangle (8,21);
\draw[fill=blue!30] (6,23) rectangle (8,25);
\draw[fill=blue!30] (6,26) rectangle (8,29);
\draw[fill=blue!30] (9,20) rectangle (10,21);
\draw[fill=blue!30] (9,23) rectangle (10,25);
\draw[fill=blue!30] (9,26) rectangle (10,29);
\draw[fill=green!52] (0,21) rectangle (1,23);
\draw[fill=green!52] (0,25) rectangle (1,26);
\draw[fill=green!52] (0,29) rectangle (1,30);
\draw[fill=green!52] (4,21) rectangle (6,23);
\draw[fill=green!52] (4,25) rectangle (6,26);
\draw[fill=green!52] (4,29) rectangle (6,30);
\draw[fill=green!52] (8,21) rectangle (9,23);
\draw[fill=green!52] (8,25) rectangle (9,26);
\draw[fill=green!52] (8,29) rectangle (9,30);
\draw[blue] (0,20) grid (10,30);
\draw[very thick, scale=10] (0,2) grid (1,3);
\draw[dashed,thick,blue] (0,0)--(10,10) (0,20)--(10,30);

\draw[fill] (0.5,15.5) circle [radius=0.1];   
\draw[fill] (4.5,19.5) circle [radius=0.1];
\draw[fill] (5.5,21.5) circle [radius=0.1];
\draw[fill] (8.5,22.5) circle [radius=0.1];
%
\draw[fill,red] (1.5,7.5) circle [radius=0.1];   
\draw[fill,red] (2.5,8.5) circle [radius=0.1];
\draw[fill,red] (3.5,10.5) circle [radius=0.1];
\draw[fill,red] (6.5,13.5) circle [radius=0.1];
\draw[fill,red] (7.5,14.5) circle [radius=0.1];
\draw[fill,red] (9.5,16.5) circle [radius=0.1];


\end{scope}

\end{tikzpicture}

\end{center}
\caption{The action of $\Psi$ for $k=2$ and $N=10$:
The grid on the left is the example from Figure \ref{fig.ABAB}.
On the right, a copy of the same grid colouring (without the dots) 
is placed above and below the original grid, extending the original 
colouring
periodically from $\{1,\ldots,N\}^2$ to $\{1,\ldots,N\}\times
\{-N{+}1,\ldots,2N\}$. The boundedness condition requires that 
all points of the plot lie strictly between the two diagonal dashed 
lines shown.  The plotted points on the right illustrate the
result of taking $\Delta_1=2$ and $\Delta_2=-2$ in Equation (\ref{eq.sigma4def}).  
This has the effect that each black (respectively, red) dot moves up
two green spaces (respectively, down two blue spaces) 
from its original position (i.e.\ in the left grid).}
  \label{fig.ABABaffine}
\end{figure}
We remark that if $\Delta_i=0$ for each $i\in[k]$, then $\sigma$ is
just the infinite direct sum of the ordinary permutation we 
created in Equation (\ref{eq.ghord}) above 
(recall Figure \ref{fig.ABAB}).

\begin{lemma}
    \label{lem.sigmaaffine}
Let $\vec{v}=(\collec{n},\collec{G},\collec{H},\collec{\Delta})
\in \mathcal{D}_0(N)$ and let $\sigma=\Psi(\vec{v})$, 
as defined by Equations (\ref{eq.hperiodic}) and (\ref{eq.sigma4def}) above.  
Then $\sigma$ is a (not necessarily bounded) affine permutation of size $N$ that avoids $\deck$.  Moreover, every member of ${\E}_N(\deck)$
can be obtained in this way, i.e.\ every member of ${\E}_N(\deck)$ is in the image of $\Psi$.
\end{lemma}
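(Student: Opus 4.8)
The plan is to verify the three assertions of Lemma~\ref{lem.sigmaaffine} in turn: that $\sigma=\Psi(\vec v)$ is a bijection $\Z\to\Z$, that it satisfies the two defining conditions of an affine permutation of size $N$, that it avoids $\deck$, and finally that every member of $\E_N(\deck)$ arises this way.

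\medskip\noindent\textbf{$\sigma$ is a well-defined affine permutation.}
First I would note that (\ref{eq.hincr}) guarantees that $j\mapsto g_{i,j}$ is, for each fixed $i$, a strictly increasing bijection from $\Z$ onto $G_i+N\Z:=\{g+tN: g\in G_i,\ t\in\Z\}$, and that the sets $G_i+N\Z$ over $i\in[k]$ partition $\Z$ (since $\collec G$ partitions $[N]$); the same holds for the $h_{i,j}$ and $H_i+N\Z$. Hence for every $x\in\Z$ there is a unique pair $(i,j)$ with $g_{i,j}=x$, so (\ref{eq.sigma4def}) defines $\sigma(x)$ unambiguously for all $x$. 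Surjectivity and injectivity of $\sigma$ follow because $j\mapsto h_{i,j+\Delta_i}$ is a bijection $\Z\to (H_i+N\Z)$ for each $i$, and these images partition $\Z$. For condition~(i) of Definition~\ref{def.affine}: if $g_{i,j}=x$ then $x+N=g_{i,j+n_i}$ by (\ref{eq.hperiodic}), so $\sigma(x+N)=h_{i,j+n_i+\Delta_i}=h_{i,j+\Delta_i}+N=\sigma(x)+N$. For the centering condition~(ii), I would sum $\sigma(x)-x$ over $x\in[N]$: reindexing by the blocks, $\sum_{x\in[N]}(\sigma(x)-x)=\sum_{i=1}^k\sum_{j=1}^{n_i}(h_{i,j+\Delta_i}-g_{i,j})$. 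Using (\ref{eq.hperiodic}), $\sum_{j=1}^{n_i}h_{i,j+\Delta_i}=\sum_{j=1}^{n_i}h_{i,j}+\Delta_i N$ (the shift by $\Delta_i$ moves exactly $\Delta_i$ indices past a period, each contributing $+N$, with a sign for $\Delta_i<0$), so the total telescopes to $\sum_i(\sum_j h_{i,j}-\sum_j g_{i,j})+N\sum_i\Delta_i$. The first sum is $\sum_{x\in[N]}x-\sum_{x\in[N]}x=0$ since $\collec G,\collec H$ are both partitions of $[N]$, and the second vanishes by (\ref{eq.Deltasum}). Thus $\sigma$ is an affine permutation of size $N$.

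\medskip\noindent\textbf{$\sigma$ avoids $\deck$.}
For this I apply Proposition~\ref{prop.unionincreasing}: it suffices to exhibit a partition $[N]=G_1'\cup\cdots\cup G_k'$ with the stated monotonicity. I would take $G_i'=G_i$. For a fixed $i$, with $G_i=\{g_{i,1}<\cdots<g_{i,n_i}\}$, monotonicity within the block is immediate since $h_{i,1+\Delta_i}<h_{i,2+\Delta_i}<\cdots<h_{i,n_i+\Delta_i}$ (the sequence $j\mapsto h_{i,j}$ is strictly increasing on all of $\Z$ by (\ref{eq.hincr})), and the ``wrap-around'' inequality $\sigma(g_{i,n_i})<\sigma(g_{i,1}+N)$ reads $h_{i,n_i+\Delta_i}<h_{i,1+n_i+\Delta_i}$, again true by strict monotonicity. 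So Proposition~\ref{prop.unionincreasing} gives that $\sigma$ avoids $\deck$. (Here I use $N\ge k$, which holds since each $n_i\ge1$.)

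\medskip\noindent\textbf{Surjectivity onto $\E_N(\deck)$.}
Given $\omega\in\E_N(\deck)$, Proposition~\ref{prop.unionincreasing} supplies a partition $[N]=G_1\cup\cdots\cup G_k$ into nonempty blocks such that $\omega$ restricted to each $G_i+N\Z$ is a doubly-infinite increasing subsequence. Set $n_i=|G_i|$, let $G_i=\{g_{i,1}<\cdots<g_{i,n_i}\}$, and let $H_i:=\{\,\omega(g)\bmod N\ :\ g\in G_i\,\}$ viewed in $[N]$ — equivalently, $H_i$ is the set of residues of the values $\omega$ takes on $G_i+N\Z$. Because $\omega$ is increasing on $G_i+N\Z$ and periodic with period $N$, the values $\{\omega(g):g\in G_i\}$ are $n_i$ integers in distinct residue classes mod $N$, so $|H_i|=n_i$ and the $H_i$ partition $[N]$. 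Write $H_i=\{h_{i,1}<\cdots<h_{i,n_i}\}$ and extend $g,h$ periodically as in (\ref{eq.hperiodic}). Since $\omega$ maps the increasing sequence $(g_{i,j})_{j\in\Z}$ onto the increasing sequence $(h_{i,j})_{j\in\Z}$ order-preservingly, there is a single integer shift $\Delta_i$ with $\omega(g_{i,j})=h_{i,j+\Delta_i}$ for all $j$. I would then check that $\collec\Delta$ satisfies (\ref{eq.Deltasum}): the sum condition $\sum_i\Delta_i=0$ follows by running the centering computation above in reverse (condition~(ii) for $\omega$ forces it), and the bound $|\Delta_i|\le n_i$ follows from the boundedness of $\omega$. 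Indeed, $|\Delta_i|\ge n_i+1$ would force, for some $j$, $|\omega(g_{i,j})-g_{i,j}|=|h_{i,j+\Delta_i}-g_{i,j}|\ge N$, because shifting the index by more than one full period $n_i$ moves the value by more than $N$ while $g_{i,j}$ and $h_{i,j}$ lie in the same length-$N$ window by (\ref{eq.hincr}) — contradicting $|\omega(x)-x|<N$. Hence $(\collec n,\collec G,\collec H,\collec\Delta)\in\mathcal D_0(N)$ and, comparing (\ref{eq.sigma4def}) with the construction, $\Psi$ of this tuple equals $\omega$.

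\medskip\noindent\textbf{Main obstacle.}
The routine parts are the bijectivity and condition~(i). The one requiring genuine care is pinning down $\Delta_i$ in the surjectivity argument and extracting the constraints (\ref{eq.Deltasum}) from it — specifically, confirming that the order-preserving correspondence between the two doubly-infinite increasing sequences is a \emph{pure index shift} (it is, precisely because both are periodic with the same pair of periods $n_i$ on the index side and $N$ on the value side), and then translating ``$\omega$ bounded'' into $|\Delta_i|\le n_i$ cleanly. This is where the interplay between the period $n_i$ of the index and the period $N$ of the values, together with the windowing inequality (\ref{eq.hincr}), has to be handled carefully; everything else is bookkeeping.
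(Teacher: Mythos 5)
Your proposal is correct and follows essentially the same route as the paper's: verify bijectivity and condition (\textit{i}) directly, verify condition (\textit{ii}) by a periodicity computation of $\sum_\ell \sigma(\ell)$, invoke Proposition \ref{prop.unionincreasing} for $\deck$-avoidance, and for surjectivity apply Proposition \ref{prop.unionincreasing} to decompose a given $\omega\in\E_N(\deck)$, read off $\Delta_i$ as the unique index shift matching the two periodic increasing sequences, and check that $\collec{\Delta}$ satisfies (\ref{eq.Deltasum}).

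The one place where your reasoning is not airtight is the bound $|\Delta_i|\le n_i$. You argue that $|\Delta_i|\ge n_i+1$ forces $|h_{i,j+\Delta_i}-g_{i,j}|\ge N$ ``for some $j$'' because a shift past a full period moves the value by more than $N$ and $g_{i,j},h_{i,j}$ share a length-$N$ window. But those two facts alone give, for a generic $j\in[n_i]$, only
$h_{i,j+\Delta_i}-g_{i,j} \;=\; (h_{i,j+\Delta_i}-h_{i,j}) + (h_{i,j}-g_{i,j}) \;>\; N - (N-1) \;=\; 1,$
which is far short of $N$. The statement ``for some $j$'' is true, but its proof requires choosing the extreme index: for $\Delta_i\ge n_i+1$ take $j=n_i$, so that $n_i+\Delta_i\ge 2n_i+1$ and $h_{i,n_i+\Delta_i}\ge h_{i,1}+2N\ge 2N+1$ while $g_{i,n_i}\le N$, giving a gap of at least $N+1$; symmetrically take $j=1$ when $\Delta_i\le -(n_i+1)$. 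The paper obtains the same bound by applying boundedness at $j=1$ and $j=0$ and reading off the constraint on $J=\Delta_i+1$ from (\ref{eq.hincr}). Either way, the choice of index is the essential ingredient that your phrasing leaves implicit; with that supplied, your proof is complete and matches the paper's.
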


We remark that $\sigma$ in the image of $\Psi$ is not necessarily in ${\E}_N$, since there is no guarantee that the constraint $|\sigma(i)-i|<N$ holds for all $i$.
    
\begin{proof}[Proof of Lemma \ref{lem.sigmaaffine}.]  Observe first that Equation (\ref{eq.hperiodic}) actually holds for every integer $j$.
By Equation (\ref{eq.hincr}) and the subsequent comments, it is apparent that $\sigma$ is a 
well-defined bijection of $\mathbb{Z}$.  Property ($i$) of Definition \ref{def.affine} follows from Equations 
(\ref{eq.hperiodic}) and (\ref{eq.sigma4def}).
For property ($ii$), let $f_i(r) \,=\, \sum_{j=1}^{n_i}h_{i,j+r}$ for $i\in[k]$ and $r\in \mathbb{Z}$.
Then 
\[   f_i(r+1)-f_i(r)   \;=\;    h_{i,n_i+r+1}-h_{i,r+1}   \;=\; N  \hspace{5mm}\hbox{for every $r\in \mathbb{Z}$.}
\]
Moreover, 
\[     \sum_{i=1}^k f_i(0)    \;=\;  \sum_{i=1}^k \sum_{j=1}^{n_i} h_{i,j}   \;=\;  \sum_{\ell=1}^N \ell \,.    
\]
It follows that 
\begin{align}
    \nonumber
   \sum_{\ell=1}^N\sigma(\ell)   \; & =\;    \sum_{i=1}^k\sum_{j=1}^{n_i} \sigma(g_{i,j})   
       \;=\;  \sum_{i=1}^k f_i(\Delta_i) \\
       \nonumber
      & = \;  \sum_{i=1}^k \left( f_i(0)\,+\Delta_i N\right)       \\
        \label{eq.sumDeltasum}
       &   = \; \sum_{\ell=1}^N \ell \;+\;\left(\sum_{i=1}^k\Delta_i\right)N   \,.
\end{align}
Thus condition ($ii$) follows from the fact that $\sum_{i=1}^k\Delta_i=0$.  Finally, we know that $\sigma$
avoids $\deck$ because $\{\sigma(i)\}_{i\in \mathbb{Z}}$ can be partitioned into $k$ increasing subsequences. 

To prove the final statement of the lemma, let $\phi\in \E_N(\deck)$.
Partition $\phi$ into $k$ nonempty increasing periodic subsequences 
as in Proposition \ref{prop.unionincreasing}, writing 
$[N]=G_1\cup \cdots \cup G_k$ with $G_i=\{g_{i,j}:j\in [n_i]\}$
such that 
\begin{eqnarray*}  
 &  1\, \leq \,g_{i,1}\,<\,g_{i,2} \,< \,\ldots \,<\, g_{i,n_i} \, \leq \,N   \hspace{6mm}\hbox{and} & 
\\   & 
   \phi(g_{i,1}) \,<\,\phi(g_{i,2}) \,< \,\ldots \,<\,
   \phi(g_{i,n_i}) \, < \,\phi(g_{i,1}+N) \,=\, \phi(g_{i,1})+N\,. & 
\end{eqnarray*}
Let $H_i$ be the $n_i$-element subset of 
$[1,N]$ consisting of the elements that are congruent mod $N$ to  $\{\phi(g_{i,j}):1 \le j \le n_i\}$.
Write the elements of $H_i$ as in Equation (\ref{eq.horder}).
Next, extend the definition of $h_{i,j}$ to all $j\in \mathbb{Z}$
by Equation (\ref{eq.hperiodic}).   Then $\phi(g_{i,1}) \,=\,h_{i,J}$ for some integer $J=J(i)$.  
Set $\Delta_i=J(i)-1$, so that 
$\phi=\Psi(\collec{n},\collec{G},\collec{H},\collec{\Delta})$.   
It remains only to show  that Equation (\ref{eq.Deltasum}) holds.
Since $\phi\in {\E}_N$, we know that $|h_{i,J}-g_{i,1}|<N$.
In particular, $h_{i,J}\,>\,-N+g_{i,1}\,\geq \,-N+1$.  Thus, by Equation (\ref{eq.hincr}) for $h$, 
we conclude that $J\geq 1-n_i$, i.e.\ that $\Delta_i\geq -n_i$.  Similarly, since $\phi(g_{i,0})=h_{i,J-1}$,
we have $|h_{i,J-1}-g_{i,0}|<N$ and $h_{i,J-1}<g_{i,0}+N\leq N$, and hence $J-1\leq n_i$, 
i.e.\ $\Delta_i\leq n_i$.   Thus $|\Delta_i|\leq n_i$ for each $i$.  Finally, the equation 
$\sum_{i=1}^k\Delta_i=0$ follows from Equation (\ref{eq.sumDeltasum}) and the 
fact that $\phi\in \E_N$.  
\end{proof}

\subsection{The asymptotic upper bound}
     \label{sec.upperbound}


Recalling Definition \ref{def.domains}(b), it follows from the 
final assertion of Lemma \ref{lem.sigmaaffine} that
\begin{equation}
    \label{eq.4upbound}      
 |{\E}_N(\deck)|   \;\leq\;   \frac{1}{k!}  \sum_{\substack{n_1,\ldots,n_k\geq 1\\ n_1+\cdots+n_k=N}  }
      \left( \begin{array}{c}  N  \\ n_1,n_2,\cdots,n_k \end{array}  \right)^2 Z(n_1,\cdots,n_k).
\end{equation}
The division by $k!$ comes from the interchangeability of the the subscripts of $G_i$, $H_i$, $n_i$, and 
$\Delta_i$ (recall that each $n_i$ is non-zero). 
The basic idea behind the proof of Theorem \ref{thm.E4321} is to show
that this upper bound is asymptotically tight.  

\medskip

The asymptotic behaviour of the sum of equation (\ref{eq.4upbound}) \textit{without} the $Z$ terms was established in 2009 by Richmond and Shallit  \cite{RS}:

\begin{thm}[\cite{RS}]
   \label{thm.richmond}
Fix an integer $k\geq 2$.   Then as $N\rightarrow \infty$,
\[      \sum_{\substack{n_1,\ldots,n_k\geq 0 \\ n_1+\ldots+n_k=N} } \binom{N}{n_1,n_2,\cdots,n_k}^2
    \;\sim  \;  k^{2N+k/2} \,(4\pi N)^{(1-k)/2}   \,.
\]
\end{thm}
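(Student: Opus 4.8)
\textbf{Proof plan for Theorem~\ref{thm.richmond}.}

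The plan is to prove the asymptotic directly by the Laplace/saddle-point method applied to a multinomial sum. First I would introduce the squared-multinomial generating-function or integral representation. The cleanest route is to write $\binom{N}{n_1,\ldots,n_k}^2$ as the coefficient extraction
\[
\sum_{\substack{n_1,\ldots,n_k\ge 0\\ n_1+\cdots+n_k=N}} \binom{N}{n_1,\ldots,n_k}^2 \;=\; \sum_{\substack{n_1+\cdots+n_k=N}} \binom{N}{n_1,\ldots,n_k}\binom{N}{n_1,\ldots,n_k},
\]
which by the Cauchy--Binet / Vandermonde-type identity equals the number of pairs of words of length $N$ over a $k$-letter alphabet with the same letter-frequency vector. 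Equivalently, it is the diagonal coefficient
\[
[x_1^{N/k}\cdots] \quad\text{— better: } \sum_{\mathbf n} \binom{N}{\mathbf n}^2 \;=\; N!\,[z^N]\,\Bigl(\sum_{j\ge 0} \frac{z^j}{j!}\Bigr) \text{-type convolution},
\]
but the most transparent version is the probabilistic one: if $(X_1,\ldots,X_N)$ are i.i.d.\ uniform on $[k]$, then $\sum_{\mathbf n}\binom{N}{\mathbf n}^2 = k^{2N}\,\mathbb P(\mathbf X \text{ and } \mathbf X' \text{ have the same frequency vector})$ for two independent such samples, i.e.\ $k^{2N}\,\mathbb P(\mathbf{S}_N = \mathbf{S}'_N)$ where $\mathbf S_N$ is the (multinomial) count vector. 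So the whole problem reduces to the local central limit theorem for the multinomial distribution.

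The key steps, in order: (1) Reduce to $\mathbb{P}(\mathbf{S}_N=\mathbf{S}'_N)=\sum_{\mathbf n}\mathbb{P}(\mathbf{S}_N=\mathbf n)^2$, the collision probability (Rényi entropy exponential) of the multinomial law. (2) Apply the local CLT: the multinomial vector $\mathbf{S}_N$, restricted to its first $k-1$ coordinates (the last being determined), is asymptotically Gaussian with mean $(N/k,\ldots,N/k)$ and covariance matrix $N\,\Sigma$, where $\Sigma$ is the $(k-1)\times(k-1)$ matrix with $\Sigma_{ii}=\tfrac1k(1-\tfrac1k)$ and $\Sigma_{ij}=-\tfrac1{k^2}$; one checks $\det\Sigma = k^{-(k-1)}\cdot k \cdot \tfrac{1}{?}$ — concretely $\det(N\Sigma)$ must be computed, and this is where the factor $k^{k/2}$ and the $(4\pi N)^{(1-k)/2}$ come from. (3) Then
\[
\sum_{\mathbf n}\mathbb{P}(\mathbf{S}_N=\mathbf n)^2 \;\sim\; \int_{\mathbb R^{k-1}} \Bigl(\frac{e^{-\frac12 \mathbf{x}^\top(N\Sigma)^{-1}\mathbf{x}}}{(2\pi)^{(k-1)/2}\sqrt{\det(N\Sigma)}}\Bigr)^2 d\mathbf{x} \;=\; \frac{1}{(2\pi)^{(k-1)/2}\,2^{(k-1)/2}\sqrt{\det(N\Sigma)}},
\]
using the standard Gaussian identity $\int (\text{density})^2 = (2^{k-1})^{-1/2}(\det(2\pi N\Sigma))^{-1/2}$ — wait, more precisely $\int_{\mathbb R^d} \varphi_{0,\Sigma}(x)^2\,dx = \bigl(2^d (2\pi)^d \det\Sigma\bigr)^{-1/2}\cdot 2^{d/2}$, so one must be careful to get $(4\pi N)^{(1-k)/2}$ exactly. (4) Multiply back by $k^{2N}$ and identify $\det\Sigma^{-1/2}$ with $k^{k/2}$ (up to the remaining constant), yielding $k^{2N+k/2}(4\pi N)^{(1-k)/2}$.

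The main obstacle I expect is making the local limit theorem rigorous \emph{uniformly} enough to justify replacing the sum $\sum_{\mathbf n}\mathbb{P}(\mathbf S_N=\mathbf n)^2$ by the Gaussian integral — i.e.\ controlling the tails (large-deviation regime where $\mathbf n$ is far from $N/k\cdot\mathbf 1$, which contribute negligibly) and controlling the error in the local CLT near the center (the $O(N^{-1})$-type corrections to the Gaussian approximation summed over $\Theta(N^{(k-1)/2})$ lattice points). A clean way to handle this is via the Fourier/characteristic-function representation: write $\mathbb P(\mathbf S_N=\mathbf S'_N)$ as an integral over the torus $[0,2\pi]^{k-1}$ of $|\widehat{p}(\boldsymbol\theta)|^{2N}$ where $\widehat p(\boldsymbol\theta)=\tfrac1k\sum_{j=1}^k e^{i\theta_j}$ (with $\theta_k=0$), then do Laplace's method around $\boldsymbol\theta=0$ — the unique maximum of $|\widehat p|$ on the torus — expanding $\log|\widehat p(\boldsymbol\theta)|^2 = -\tfrac1k\bigl(\sum \theta_j^2 - \tfrac1k(\sum\theta_j)^2\bigr)+O(|\boldsymbol\theta|^4)$. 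The quadratic form here is exactly $\mathbf{x}^\top\Sigma\mathbf{x}$, the Gaussian integral $\int_{\mathbb R^{k-1}} e^{-N\,\mathbf{x}^\top\Sigma\mathbf x}\,d\mathbf{x} = (\pi/N)^{(k-1)/2}(\det\Sigma)^{-1/2}$ produces the stated constant after a short determinant computation ($\det\Sigma = k^{1-k}$, actually $\det\Sigma=k^{-(k-1)}$ via the matrix-determinant lemma on $\tfrac1k I - \tfrac1{k^2}\mathbf 1\mathbf 1^\top$ restricted appropriately — I would double-check this gives $(4\pi)^{(1-k)/2}k^{k/2}$), and the routine-but-necessary bookkeeping is bounding the contribution of $\boldsymbol\theta$ away from $0$ by $(\sup_{|\boldsymbol\theta|\ge\delta}|\widehat p|)^{2N}$, which is exponentially small. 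This is exactly the argument of Richmond and Shallit, so I would either reproduce this Fourier-Laplace computation or simply cite~\cite{RS}.
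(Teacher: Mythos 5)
Your sketch is, in substance, the Fourier/local central limit theorem argument that Richmond and Shallit actually give, and the paper itself does not reprove the result but simply cites \cite{RS}; so you are taking the same route as the paper, and the outline is sound. For the record, on the point you flagged for double-checking: the covariance matrix $\Sigma = \tfrac1k I_{k-1} - \tfrac1{k^2}\mathbf{1}\mathbf{1}^\top$ has $\det\Sigma = k^{-(k-1)}\bigl(1 - \tfrac{k-1}{k}\bigr) = k^{-k}$ (not $k^{-(k-1)}$ or $k^{1-k}$), so $(\det\Sigma)^{-1/2} = k^{k/2}$; and the Gaussian identity is $\int_{\mathbb R^{d}} \varphi_{0,C}(x)^2\,dx = (4\pi)^{-d/2}(\det C)^{-1/2}$, which with $d=k-1$ and $C=N\Sigma$ gives $(4\pi N)^{(1-k)/2}k^{k/2}$, and multiplying by $k^{2N}$ yields the stated $k^{2N+k/2}(4\pi N)^{(1-k)/2}$.
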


The dominant terms of the sum of equation (\ref{eq.4upbound}) are those for which all $n_i$'s are
approximately equal.  This is quantified in the following result, which is a straightforward application 
of a well-known bound on tail probabilities.

\begin{lemma}
    \label{lem.alpha3}
Fix an integer $k\geq 2$.
 Fix $\alpha\in (0,1/k)$.  Then for every $N$ we have
 \begin{equation}
    \label{eq.Hoeff}
            \sum_{\substack{n_1,\ldots,n_k\geq 0\;: \\ \; n_1+\cdots+n_k=N, \\ \left|n_i-\frac{N}{k}\right| > \alpha N
  \; \textup{for some }i  }} 
     \binom{N}{n_1,\cdots,n_k}^2   \;\leq \;  4\,k^{2N+2} e^{-4N\alpha^2}  \,.   
\end{equation}
In particular,  as $N\rightarrow \infty$,
 \begin{equation}
    \label{eq.Hoeff2}
        \sum_{\substack{n_1,\ldots,n_k\geq 1 \;: \\ \; n_1+\cdots+n_k=N, \\ \left|n_i-\frac{N}{k}\right| \leq \alpha N
  \; \textup{for all }i  }} 
     \binom{N}{n_1,\ldots,n_k}^2 
      \;\sim \;   
      \sum_{\substack{n_1,\ldots,n_k\geq 1\;: \\ \; n_1+\cdots+n_k=N} } \binom{N}{n_1,\ldots,n_k}^2 \,.
 \end{equation}
 \end{lemma}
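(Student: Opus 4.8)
The plan is to prove Lemma \ref{lem.alpha3} by interpreting the squared multinomial coefficients probabilistically and applying Hoeffding's inequality, then deducing \eqref{eq.Hoeff2} as a routine corollary.

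For the main bound \eqref{eq.Hoeff}, first I would note the elementary identity
\[
  \sum_{\substack{n_1,\ldots,n_k\geq 0 \\ n_1+\cdots+n_k=N}} \binom{N}{n_1,\ldots,n_k}^2 \;=\; k^{2N}\,\Pr\bigl[(X_1,\ldots,X_k)=(Y_1,\ldots,Y_k)\bigr]^{-1}\cdot(\text{something}),
\]
but more directly: $\binom{N}{n_1,\ldots,n_k}/k^N$ is the probability that a sequence of $N$ i.i.d.\ uniform draws from $[k]$ has exactly $n_i$ values equal to $i$, so $\binom{N}{n_1,\ldots,n_k}^2/k^{2N}$ is the probability that \emph{two} independent such sequences both have that composition. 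Summing over the ``bad'' set where $|n_i - N/k| > \alpha N$ for some $i$, the left side of \eqref{eq.Hoeff} equals $k^{2N}\Pr[\mathcal{B}]$, where $\mathcal{B}$ is the event that the composition vector $(X_1,\ldots,X_k)$ of the first sequence lies in the bad set (the second sequence is then unconstrained, contributing a factor $1$ after we sum it out — one has to be slightly careful and instead bound by $k^{2N}\Pr[\mathcal{B}_1]$ where $\mathcal{B}_1$ involves only the first sequence, using that $\sum_{\text{fixed composition}}\binom{N}{n_1,\ldots,n_k}\le k^N$). For a single multinomial sample, $X_i = \sum_{t=1}^N \mathbf{1}[\text{draw }t = i]$ is a sum of $N$ i.i.d.\ Bernoulli$(1/k)$ variables, so Hoeffding's inequality gives $\Pr[|X_i - N/k| > \alpha N] \le 2e^{-2N\alpha^2}$. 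A union bound over $i\in[k]$ gives $\Pr[\mathcal{B}_1] \le 2k\,e^{-2N\alpha^2}$, hence the left side of \eqref{eq.Hoeff} is at most $2k\,k^{2N} e^{-2N\alpha^2}$, which is comfortably below the claimed $4k^{2N+2}e^{-4N\alpha^2}$ once... wait — the exponents differ. To land on $e^{-4N\alpha^2}$ I should instead bound the squared sum directly: $\sum_{\text{bad}}\binom{N}{n_1,\ldots,n_k}^2 \le \bigl(\max_{\text{bad}}\binom{N}{n_1,\ldots,n_k}\bigr)\cdot\sum_{\text{bad}}\binom{N}{n_1,\ldots,n_k}$ is too lossy; better is to use that each term $\binom{N}{n_1,\ldots,n_k}^2 = k^{2N}\Pr[\text{comp}_1 = \text{comp}_2 = \vec n]$ and sum over bad $\vec n$ to get $k^{2N}\Pr[\text{comp}_1 = \text{comp}_2 \text{ and } \text{comp}_1 \in \text{bad}]$; since this event forces $\text{comp}_2$ into the bad set as well, bound it by $k^{2N}\Pr[\text{comp}_1\in\text{bad}]^{1/2}\Pr[\text{comp}_2\in\text{bad}]^{1/2}$ via Cauchy–Schwarz — no, the events aren't independent. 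The clean route giving the factor $4$: bound $\Pr[\text{comp}_1=\text{comp}_2\in\text{bad}] \le \Pr[\,|X_i^{(1)}-N/k|>\alpha N \text{ and } |X_i^{(2)}-N/k|>\alpha N \text{ for the same }i\,]$ summed over $i$, and since the two samples are independent this is $\le \sum_i \bigl(2e^{-2N\alpha^2}\bigr)^2 = 4k\,e^{-4N\alpha^2} \le 4k^2 e^{-4N\alpha^2}$. Multiplying by $k^{2N}$ gives exactly \eqref{eq.Hoeff}.

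For \eqref{eq.Hoeff2}, I would subtract: the left side of \eqref{eq.Hoeff2} equals the right side of \eqref{eq.Hoeff2} minus the sum over the bad set (intersected with all $n_i \ge 1$, which only removes more terms), and the latter is $O(k^{2N} e^{-4N\alpha^2})$ by \eqref{eq.Hoeff}. By Theorem \ref{thm.richmond}, the right side of \eqref{eq.Hoeff2} is $\sim k^{2N+k/2}(4\pi N)^{(1-k)/2}$, which decays only polynomially in $N$ relative to $k^{2N}$, so the exponentially small correction is negligible; hence the ratio of the two sides of \eqref{eq.Hoeff2} tends to $1$. (A minor point: Theorem \ref{thm.richmond} allows $n_i = 0$, but the terms with some $n_i=0$ form a bad-set contribution and are likewise exponentially negligible, so restricting to $n_i\ge 1$ does not affect the asymptotics.)

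The main obstacle is purely bookkeeping: getting the constant and the exponent in \eqref{eq.Hoeff} to match by correctly exploiting the \emph{two} independent multinomial samples — one must resist the temptation to bound via a single sample (which loses a factor of $2$ in the exponent) and instead observe that the squared coefficient corresponds to the joint event for both samples, so that the Hoeffding bound gets squared before the union bound over $i$. Everything else is standard.
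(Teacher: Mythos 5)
Your proposal is correct, and the final argument you settle on differs from the paper's in one small but interesting way. The paper first bounds the \emph{linear} sum over the bad set — establishing $\sum_{\text{bad}}\binom{N}{\vec n}\,k^{-N}=\Pr\bigl(|Y_i-N/k|>\alpha N\ \text{for some }i\bigr)\le 2k\,e^{-2N\alpha^2}$ via a union bound and Hoeffding for a single multinomial sample — and then deduces the claim by the elementary inequality $\sum a_i^2\le\bigl(\sum a_i\bigr)^2$ for nonnegative terms, which squares both the constant ($2k\to 4k^2$) and the exponential ($e^{-2N\alpha^2}\to e^{-4N\alpha^2}$). You instead interpret $\binom{N}{\vec n}^2 k^{-2N}$ directly as the probability that \emph{two} independent multinomial samples both realize composition $\vec n$, so the bad-set sum becomes a joint probability; then by a union bound over $i$ and independence of the two samples you get $\sum_i\bigl(2e^{-2N\alpha^2}\bigr)^2=4k\,e^{-4N\alpha^2}$, which you loosen to $4k^2e^{-4N\alpha^2}$ to match the statement. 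Both are valid and morally the same use of Hoeffding; your two-sample version is in fact a factor of $k$ tighter before the final relaxation, while the paper's is marginally shorter. Your deduction of \eqref{eq.Hoeff2} from \eqref{eq.Hoeff} and Theorem \ref{thm.richmond} is exactly what the paper does, and your parenthetical remark about $n_i=0$ being absorbed into the exponentially small bad set correctly addresses the small mismatch between the index set in Theorem \ref{thm.richmond} and the index set in \eqref{eq.Hoeff2}.

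One note on presentation: the intermediate false starts you left in the write-up (the attempt via $\max\cdot\sum$, the aborted Cauchy--Schwarz on dependent events) should be excised in a final version, since the route you land on is clean and self-contained.
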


\begin{proof}
Consider a sequence of $N$ independent random variables $X_1,\ldots,X_N$, where each $X_j$ is
chosen uniformly at random from the set $\{1,\ldots,k\}$.  For $i \in [k]$, let $Y_i$ be the 
number of $X_j$'s that are equal to $i$.  Then the joint distribution of $(Y_1,\ldots,Y_k)$ is 
multinomial with parameters $N$ and $p_1=\cdots=p_k=1/k$.  Also, the (marginal) distribution of each
$Y_i$ is binomial with parameters $N$ and $p=1/k$.
Thus we have
\begin{align*}
  \sum_{\substack{n_1,\ldots,n_k\;: \\ \; n_1+\cdots+n_k=N, \\ \left|n_i-\frac{N}{k}\right| > \alpha N
  \; \textup{for some }i  }} 
    \!\!\! \binom{N}{n_1,\cdots,n_k} \,k^{-N}  \; & =  \;
     \Pr\left(\left|Y_i-\frac{N}{k}\right|>\alpha N  \hbox{ for some }i\right)
   \\
   & \leq \;\sum_{i=1}^k   \Pr\left(\left|\frac{Y_i}{N}-\frac{1}{k}\right|>\alpha \right)
   \\
   & \leq 2\,k\, e^{-2N\alpha^2}   \,,
\end{align*}
where the last line uses Hoeffding's Inequality applied to the binomial distribution (Theorem 2 of \cite{Ho}).
The inequality (\ref{eq.Hoeff}) follows directly.

The inequality (\ref{eq.Hoeff2}) follows from (\ref{eq.Hoeff}) and Theorem \ref{thm.richmond}.
\end{proof}

We shall also need to understand the asymptotics of $Z(n_1,\cdots,n_k)$ when each $n_i$ is 
close to $N/k$.
We remark that $Z(n_1,\ldots,n_k)$ is the coefficient of $x^N$ (the middle term) in
$\prod_{i=1}^k (1+x+x^2+\cdots+x^{2n_i})$.   This connection was made 
in 1876 by D\'{e}sir\'{e} Andr\'{e}, when he proved the following result.

\begin{thm}
    \label{thm.andre}
For the case $n_i=n$ for every $i=1,\ldots,k$, we have 
\[    Z(n,\ldots,n)    \;=\;  k \sum_{j=0}^{\lfloor kn/(2n+1)\rfloor}  (-1)^j   
      \frac{(k+kn-j(2n+1)-1)!}{j!(k-j)!(kn-j(2n+1))!}   \,.
\]
\end{thm}

\begin{proof} This result is given in Remarks 60 and 61 of Andr\'{e} \cite{An}.  In the notation 
in that paper,  $Z(n,\ldots,n)  = (k,kn)_{2n}$.  The same result is presented with a different proof 
in \cite{CR}.
\end{proof}

\begin{cor}
   \label{cor.andre1}
For the case $n_i=n$ for every $i=1,\ldots,k$, we have   
\[   \lim_{n\rightarrow\infty}   \frac{Z(n,\ldots,n)}{n^{k-1}}   \;=\;  
        \frac{1}{(k-1)!}\sum_{j=0}^{\lfloor k/2 \rfloor}  (-1)^j  \binom{k}{j}\left(  k-2j\right)^{k-1} \;=\; Z^*_k.
\]
\end{cor}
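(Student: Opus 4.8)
The plan is to read the leading-order asymptotics straight off André's closed form (Theorem \ref{thm.andre}). First I would rewrite the general summand. Setting $M_j := kn - j(2n+1) = (k-2j)n - j$, the factorial ratio in that formula is
\[
\frac{\bigl(k + kn - j(2n+1) - 1\bigr)!}{\bigl(kn - j(2n+1)\bigr)!} \;=\; \frac{(M_j + k - 1)!}{M_j!} \;=\; \prod_{\ell=1}^{k-1}(M_j + \ell),
\]
which for each fixed $j$ is a monic polynomial in $M_j$ of degree $k-1$, hence a polynomial in $n$ of degree $\le k-1$ whose coefficient of $n^{k-1}$ equals $(k-2j)^{k-1}$ (this coefficient being $0$ exactly when $k-2j=0$). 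So André's formula becomes
\[
Z(n,\ldots,n) \;=\; k \sum_{j=0}^{\lfloor kn/(2n+1)\rfloor} \frac{(-1)^j}{j!\,(k-j)!}\,\prod_{\ell=1}^{k-1}\bigl((k-2j)n - j + \ell\bigr).
\]

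Next I would pin down the behaviour of the $n$-dependent upper limit. Since $kn/(2n+1) < k/2$ for every $n$ and $kn/(2n+1)\to k/2$, for all sufficiently large $n$ the bound $\lfloor kn/(2n+1)\rfloor$ is constant: it equals $\lfloor k/2\rfloor$ when $k$ is odd, and $k/2 - 1$ when $k$ is even. In the even case the single omitted index $j = k/2$ has $k - 2j = 0$, so its contribution to the $n^{k-1}$ coefficient is $0$; consequently I may run the sum up to $\lfloor k/2\rfloor$ in both parities without changing anything at order $n^{k-1}$. Note also that for $0\le j\le\lfloor k/2\rfloor$ we have $k-2j\ge 0$, so no sign subtleties arise from negative factors.

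Then, dividing by $n^{k-1}$, I take the limit term by term: for fixed $j$,
\[
\frac{1}{n^{k-1}}\prod_{\ell=1}^{k-1}\bigl((k-2j)n - j + \ell\bigr) \;=\; \prod_{\ell=1}^{k-1}\Bigl(k - 2j + \tfrac{\ell - j}{n}\Bigr) \;\xrightarrow[n\to\infty]{}\; (k-2j)^{k-1}.
\]
Since the number of summands is bounded independently of $n$ (once $n$ is large), I can interchange the limit with this finite sum to get
\[
\lim_{n\to\infty}\frac{Z(n,\ldots,n)}{n^{k-1}} \;=\; k\sum_{j=0}^{\lfloor k/2\rfloor}\frac{(-1)^j}{j!\,(k-j)!}\,(k-2j)^{k-1},
\]
and finally, using $\dfrac{k}{j!\,(k-j)!} = \dfrac{1}{(k-1)!}\dbinom{k}{j}$, this is precisely $\dfrac{1}{(k-1)!}\sum_{j=0}^{\lfloor k/2\rfloor}(-1)^j\binom{k}{j}(k-2j)^{k-1} = Z^*_k$.

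This is essentially a bookkeeping argument, so there is no real obstacle; the only point that needs a moment's care is the behaviour of the summation bound $\lfloor kn/(2n+1)\rfloor$ as $n\to\infty$ together with the observation that the possibly-dropped top term ($j=k/2$ when $k$ is even) contributes nothing at order $n^{k-1}$ — which is exactly what reconciles the limit with the stated closed form, whose upper index is $\lfloor k/2\rfloor$.
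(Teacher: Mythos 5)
Your proof is correct and takes essentially the same approach as the paper: the paper's proof is a one-liner asserting that the corollary ``follows directly from Theorem \ref{thm.andre}'' and noting that the $j=k/2$ summand vanishes when $k$ is even; you supply the details of that deduction (rewriting the factorial ratio as a degree-$(k-1)$ polynomial in $n$ with leading coefficient $(k-2j)^{k-1}$, stabilizing the summation bound, and passing to the limit term by term).
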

\noindent 
Recall that $Z^*_k$ was defined in Equation (\ref{eq.Zstardef}).

\begin{proof}  This follows directly from Theorem \ref{thm.andre}.  Notice that when $k$ is even, 
the summand for $j=k/2$ is 0.
\end{proof}


We are now ready to prove the asymptotic upper bound corresponding to Theorem \ref{thm.E4321}.

\begin{prop}
   \label{prop.4321upper}
\[    \limsup_{N\rightarrow\infty}  
     \frac{|  {\E}_N(\deck)|}{k^{2N}N^{(k-1)/2} }  \;\leq \; 
       \frac{Z^*_k}{k^{k/2}  \,(4\pi)^{(k-1)/2}\,(k-1)!} \,.
\]
\end{prop}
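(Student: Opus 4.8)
The plan is to bound the sum in Equation~(\ref{eq.4upbound}) from above by combining Theorem~\ref{thm.richmond}, Lemma~\ref{lem.alpha3}, and Corollary~\ref{cor.andre1}. The starting point is
\[
|{\E}_N(\deck)| \;\leq\; \frac{1}{k!}\sum_{\substack{n_1,\ldots,n_k\geq 1\\ n_1+\cdots+n_k=N}} \binom{N}{n_1,\ldots,n_k}^2 Z(n_1,\ldots,n_k),
\]
so it suffices to control the asymptotics of the right-hand side. The factor $\binom{N}{n_1,\ldots,n_k}^2$ is already understood: by Theorem~\ref{thm.richmond} its total sum over compositions is $\sim k^{2N+k/2}(4\pi N)^{(1-k)/2}$, which matches the target growth rate $k^{2N}N^{(k-1)/2}$ after we multiply in an extra factor of order $N^{k-1}$; that extra factor is exactly the size of $Z(n_1,\ldots,n_k)$ when all $n_i\approx N/k$, by Corollary~\ref{cor.andre1} (which gives $Z(n,\ldots,n)\sim Z^*_k\, n^{k-1}$ with $n\approx N/k$).

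The key step is to show that the terms with some $n_i$ far from $N/k$ are negligible, and that on the ``balanced'' terms we may replace $Z(n_1,\ldots,n_k)$ by $Z^*_k (N/k)^{k-1}(1+o(1))$ uniformly. For the first point: fix a small $\alpha\in(0,1/k)$; we need a crude upper bound on $Z(n_1,\ldots,n_k)$ valid for all compositions — something like $Z(n_1,\ldots,n_k)\leq (2\max_i n_i+1)^{k-1}\leq (2N+1)^{k-1}$, which follows since $Z$ is a coefficient of a product of $k$ polynomials each of degree $2n_i\le 2N$ (indeed the generating function $\prod(1+x+\cdots+x^{2n_i})$ has all coefficients bounded by $\prod(2n_i+1)/(2\max_i n_i+1)\cdot$ something, but the simple bound $\prod_{i}(2n_i+1)$ or even $(2N+1)^{k}$ is more than enough). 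Multiplying the Hoeffding tail bound~(\ref{eq.Hoeff}), which is exponentially small in $N$, by this polynomial factor still gives something exponentially small, hence negligible compared to $k^{2N}N^{(k-1)/2}$. For the second point: when $|n_i-N/k|\le \alpha N$ for all $i$, I need a bound of the form $Z(n_1,\ldots,n_k)\leq (1+\varepsilon(\alpha))Z^*_k (N/k)^{k-1}$ for large $N$, where $\varepsilon(\alpha)\to 0$ as $\alpha\to 0$. One clean way: since $Z$ is monotone in each $n_i$ (enlarging a polynomial factor $1+x+\cdots+x^{2n_i}$ can only increase the middle coefficient — this needs a short justification via unimodality/log-concavity of the coefficient sequence, or one can simply note that $Z$ counts lattice points in a box subject to a hyperplane and increasing a box dimension adds points), we have $Z(n_1,\ldots,n_k)\leq Z(n^+,\ldots,n^+)$ where $n^+=\lceil N/k+\alpha N\rceil$, and then Corollary~\ref{cor.andre1} gives $Z(n^+,\ldots,n^+)\sim Z^*_k (n^+)^{k-1} = Z^*_k (N/k)^{k-1}(1+k\alpha)^{k-1}(1+o(1))$.

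Putting these together, for any fixed $\alpha\in(0,1/k)$,
\[
\limsup_{N\to\infty}\frac{|{\E}_N(\deck)|}{k^{2N}N^{(k-1)/2}} \;\leq\; \frac{1}{k!}\cdot Z^*_k\, k^{-(k-1)}(1+k\alpha)^{k-1}\cdot \limsup_{N\to\infty}\frac{\sum \binom{N}{n_1,\ldots,n_k}^2}{k^{2N}N^{(k-1)/2}},
\]
where the remaining sum is over all positive compositions and, by Lemma~\ref{lem.alpha3}(\ref{eq.Hoeff2}) together with Theorem~\ref{thm.richmond}, equals $k^{2N+k/2}(4\pi)^{(1-k)/2}N^{(1-k)/2}(1+o(1))$, so the last $\limsup$ is $k^{k/2}(4\pi)^{(1-k)/2}$. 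Hence the bound is $\frac{Z^*_k}{k!}\,k^{-(k-1)}\,(1+k\alpha)^{k-1}\,k^{k/2}(4\pi)^{(1-k)/2} = \frac{Z^*_k}{k^{k/2}(4\pi)^{(k-1)/2}(k-1)!}\,(1+k\alpha)^{k-1}$, using $k!/k = (k-1)!$ and $k^{-(k-1)}k^{k/2}=k^{-k/2+1}$, wait — let me not grind the exponents here; the arithmetic collapses to the stated constant times $(1+k\alpha)^{k-1}$. Letting $\alpha\to 0$ finishes the proof. The main obstacle I anticipate is the uniformity in the replacement of $Z(n_1,\ldots,n_k)$ by its asymptotic value on the balanced region: I need the monotonicity (or unimodality) argument to be airtight so that the ``sandwich'' between $Z(n^-,\ldots,n^-)$ and $Z(n^+,\ldots,n^+)$ with $n^\pm = N/k \pm O(\alpha N)$ is genuinely valid for every balanced composition, not just asymptotically. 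Everything else is routine bookkeeping with the three cited results.
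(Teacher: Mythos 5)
Your proposal is correct and is essentially the same argument as the paper's: split the sum in Equation~(\ref{eq.4upbound}) at a threshold $\alpha N$ via Lemma~\ref{lem.alpha3}, discard the unbalanced terms using the crude polynomial bound on $Z$, bound $Z$ on the balanced region by its value at $n_i\approx N/k+\alpha N$ using monotonicity of $Z$ in each argument, invoke Corollary~\ref{cor.andre1} and Theorem~\ref{thm.richmond}, and let $\alpha\to 0$. The monotonicity worry you flag is not a real obstacle --- as you note yourself, $Z(n_1,\ldots,n_k)$ literally counts lattice points $\collec{\Delta}$ with $|\Delta_i|\le n_i$ and $\sum\Delta_i=0$, so enlarging any $n_i$ enlarges the set being counted --- and this is exactly what the paper relies on.
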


\begin{proof}
Let $\alpha\in (0,1/k)$.   The upper bound (\ref{eq.4upbound}) says that 
\[    
       |  {\E}_N(\deck)|   \;   \leq \;   \frac{1}{k!} \left({\sum}_{N,\leq}  \,+\,  {\sum}_{N,>} \right)  \,,  
\]
where 
\begin{align*}
    {\sum}_{N,\leq}  \; & = \;     
      \sum_{\substack{n_1,\ldots,n_k \ge 1\;: \\ \; n_1+\cdots+n_k=N, \\ \left|n_i-\frac{N}{k}\right| \leq \alpha N
  \; \textup{for all }i  }} 
     \binom{N}{n_1,\ldots,n_k}^2  Z(n_1,\ldots,n_k)    \hspace{5mm}  \hbox{and}
     \\
   {\sum}_{N,>}  \; & = \; 
   \sum_{\substack{n_1,\ldots,n_k \ge 1\;: \\ \; n_1+\cdots+n_k=N, \\ \left|n_i-\frac{N}{k}\right| > \alpha N
  \; \textup{for some }i  }} 
     \binom{N}{n_1,\cdots,n_k}^2  Z(n_1,\ldots,n_k)  \,.
\end{align*}
In view of the obvious bound $Z(n_1,\ldots,n_k)\leq (2N+1)^{k-1}$, we see from inequality 
(\ref{eq.Hoeff}) that ${\sum}_{N,>} \,=\, o(k^{2N})$ 
as $N\rightarrow\infty$.
We can now turn to the asymptotics of ${\sum}_{N,\leq}$. 

Let $u_N = \left\lfloor \frac{N}{k}+\alpha N\right\rfloor$.  In every term of ${\sum}_{N,\leq}$
we know that $n_i\leq u_N$ for each $i$, and hence $Z(n_1,\ldots,n_k)\leq Z(u_N,\ldots,u_N)$
(since obviously $Z$ is non-decreasing in each $n_i$).  Thus we have 
\begin{equation}
    \label{eq.sumupbd1}
     {\sum}_{N,\leq}  \;\leq \;   Z(u_N,\ldots,u_N)   
     \sum_{\substack{n_1,\ldots,n_k\geq 0\;: \\ \; n_1+\ldots+n_k=N} } \binom{N}{n_1,n_2,\cdots,n_k}^2 \,.
\end{equation}
By Corollary \ref{cor.andre1}, we have $Z(u_N,\ldots,u_N)\,\sim\, (u_N)^{k-1}Z^*_k$ 
as $N\rightarrow\infty$.  Combining this with Theorem \ref{thm.richmond}, we obtain
\[    
   {\sum}_{N,\leq}  \;\leq \; N^{k-1}\left(\frac{1}{k}+\alpha\right)^{k-1}Z^*_k \times
       k^{2N+k/2} \,(4\pi N)^{(1-k)/2}  \times (1+o(1)) \,.
\]
Since ${\sum}_{N,>} \,=\, o(k^{2N})$, it follows that
\[
   \limsup_{N\rightarrow\infty}  
     \frac{|  {\E}_N(\deck)|}{k^{2N}N^{(k-1)/2} }  \;\leq \; \frac{1}{k!}
     \,\left(\frac{1}{k}+\alpha\right)^{k-1}Z^*_k  
     \,k^{k/2}\,  \left( \frac{1}{4\pi}\right)^{(k-1)/2}  \,.  
\]
Since the positive number $\alpha$ can be made arbitrarily close to 0, the proposition follows.
\end{proof}

\subsection{The asymptotic lower bound}
   \label{sec.lowerbound}

We start with some notation.
For positive integers $w$ and $N$, recall that $\Seq(w;N)$ is the set
of all $w$-element subsets of $[N]$.  In this section, we shall 
write a member of $\Seq(w;N)$ as a $w$-element vector with
the entries in increasing order:  $\vec{x}=(x(1),x(2),\cdots,x(w))$, 
with $x(1)<\cdots < x(w)$.  

In applying the following definition, we shall want $\alpha$ to 
be small, and $A$ and $B$ to be large.

\begin{defi}
    \label{def.lower}
Fix $k\geq 2$.
Let $w,N\in \mathbb{N}$, and let $\alpha$, $A$, and $B$ 
be positive real numbers. 
\\    
(a) Define 
\[    \Seq^{*A}(w;N)  \;=\;  \left\{\vec{x}\in \Seq(w;N):
     \left|x(\ell)-\ell\,\frac{N}{w+1}\right|<A \hbox{ for all }\ell \in [w]
     \right\} 
\]
(Roughly speaking, a $w$-element subset of $[N]$ is in
$\Seq^{*A}(w;N)$ if its elements are within distance $A$ of 
a uniform spacing configuration over the interval $[0,N]$.) 
\\
(b) For $\collec{n}\in \mathbb{N}^k$ such that $n_1+\cdots+n_k=N$,
define
\begin{align*}   
   {\cal V}^{**A}_N(\collec{n})   \;=\; &  \left\{  (G_1,\ldots,G_k): \, \{G_1,\ldots,G_k\}   \mbox{ is a partition of }\{1,\ldots,N\}
       \right.
        \\ & \hspace{5mm}
         \left.  \text{with } G_i\in \Seq^{*A}(n_i;N) \text{ for each } i \in [k]  \right\}.  
\end{align*}
(c) Let $\mathfrak{N}(N,\alpha)$ be the set of 
$\collec{n}\in \mathbb{N}^k$ such that $n_1+\cdots+n_k=N$ and 
\begin{equation} \label{eqn:defNNa}  \left|n_i-\frac{N}{k}\right| \leq \alpha N
  \hspace{5mm}\hbox{for each } i\in [k] \,.
\end{equation}
(d) Let $\mathcal{D}_1(N,\alpha,A,B)$  be the set of all 
$(\collec{n},\collec{G},\collec{H},\collec{\Delta}) \in
\mathcal{D}_0(N)$ with the additional constraints that 
$\collec{n}\in \mathfrak{N}(N,\alpha)$, $|\Delta_i|<n_i-B$ for every
$i\in [k]$, and 
$\collec{G},\collec{H}\in {\cal V}^{**A}(\collec{n})$.
\end{defi}
  
The following lemma establishes some regularity properties of the 
(mildly) reduced domain $\mathcal{D}_1(N,\alpha,A,B)$ of $\Psi$.
  
\begin{lemma}
   \label{lem.V**}
Let $k\geq 2$.  Let $N\in \mathbb{N}$, let $A,B>0$, and let $\alpha\in(0,1/k)$.  
Let $\vec{v}=(\collec{n},\collec{G},\collec{H},\collec{\Delta})
\in \mathcal{D}_1(N,\alpha,A,B)$.
Let $\sigma=\Psi(\vec{v})$ be the affine permutation defined 
as in Equation (\ref{eq.sigma4def}).
Then for every $i\in[k]$ and $j\in \mathbb{Z}$, 
\begin{align}
    \label{eq.stripbound1}
     \left| g_{i,j}\,-\, j \frac{N}{n_i}\right| \; & <  \; 
     A+\frac{k}{1-k\alpha} \,,
    \\
    \label{eq.stripbound2}
     \left| h_{i,j}\,-\, j \frac{N}{n_i}\right| \; & <  \; 
     A+\frac{k}{1-k\alpha} \,,
    \\
    \label{eq.stripbound3}
    \left| \sigma(g_{i,j}) \,-\, \left(g_{i,j}\,+\, \Delta_i \frac{N}{n_i}\right)\right| \; & <  \; 
        2\left(A+\frac{k}{1-k\alpha}\right) \,,
    \\
    \hbox{and}   \hspace{11mm}
 \left| \sigma(g_{i,j}) \,-\, g_{i,j} \right|    \; & < \;   N\,-\, \frac{kB}{1+k\alpha}  \,+\,   2A+\frac{2k}{1-k\alpha} \,.
       \label{eq.stripbound4}
\end{align}
In particular,  $\sigma$ is a bounded affine permutation if 
$\frac{kB}{1+k\alpha}  \,\geq \,   2A+\frac{2k}{1-k\alpha}$.
\end{lemma}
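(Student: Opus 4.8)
The plan is to prove each of the four displayed inequalities in turn, since (\ref{eq.stripbound4}) follows by combining (\ref{eq.stripbound3}) with a bound on $\Delta_i N / n_i$, and the final ``in particular'' is just rearranging (\ref{eq.stripbound4}).

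\textbf{Step 1: bounding $n_i$ and hence $N/n_i$.} Since $\collec{n}\in\mathfrak{N}(N,\alpha)$ we have $|n_i - N/k|\le\alpha N$, so $(1/k-\alpha)N \le n_i \le (1/k+\alpha)N$. This gives $\frac{k}{1+k\alpha} \le \frac{N}{n_i} \le \frac{k}{1-k\alpha}$ (using $\alpha<1/k$ so all quantities are positive), and in particular $\left|\frac{N}{n_i}-k\right|$ is controlled; but what we really want is that $\frac{N}{n_i} < \frac{k}{1-k\alpha}$, which will be used repeatedly.

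\textbf{Step 2: proving (\ref{eq.stripbound1}) and (\ref{eq.stripbound2}).} First handle $j\in[n_i]$. Since $G_i\in\Seq^{*A}(n_i;N)$, Definition \ref{def.lower}(a) gives $\left|g_{i,j} - j\frac{N}{n_i+1}\right| < A$. Then I estimate the difference between the two ``ideal'' positions: $\left|j\frac{N}{n_i+1} - j\frac{N}{n_i}\right| = j\frac{N}{n_i(n_i+1)} \le n_i\cdot\frac{N}{n_i(n_i+1)} = \frac{N}{n_i+1} < \frac{N}{n_i} < \frac{k}{1-k\alpha}$, using Step 1. The triangle inequality then yields (\ref{eq.stripbound1}) for $j\in[n_i]$. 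For general $j\in\mathbb{Z}$, write $j = j_0 + tn_i$ with $j_0\in[n_i]$ and $t\in\mathbb{Z}$; by the periodicity (\ref{eq.hperiodic}), $g_{i,j} = g_{i,j_0} + tN$, and $j\frac{N}{n_i} = j_0\frac{N}{n_i} + tN$, so $g_{i,j} - j\frac{N}{n_i} = g_{i,j_0} - j_0\frac{N}{n_i}$, reducing to the already-proved case. The bound (\ref{eq.stripbound2}) for $h$ is identical, using $H_i\in\Seq^{*A}(n_i;N)$.

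\textbf{Step 3: proving (\ref{eq.stripbound3}).} By definition (\ref{eq.sigma4def}), $\sigma(g_{i,j}) = h_{i,j+\Delta_i}$. Applying (\ref{eq.stripbound2}) with $j$ replaced by $j+\Delta_i$ gives $\left|h_{i,j+\Delta_i} - (j+\Delta_i)\frac{N}{n_i}\right| < A + \frac{k}{1-k\alpha}$, while (\ref{eq.stripbound1}) gives $\left|g_{i,j} - j\frac{N}{n_i}\right| < A + \frac{k}{1-k\alpha}$. Subtracting, $\left(j+\Delta_i\right)\frac{N}{n_i} - j\frac{N}{n_i} = \Delta_i\frac{N}{n_i}$, so by the triangle inequality $\left|\sigma(g_{i,j}) - \left(g_{i,j} + \Delta_i\frac{N}{n_i}\right)\right| < 2\left(A + \frac{k}{1-k\alpha}\right)$, which is (\ref{eq.stripbound3}).

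\textbf{Step 4: proving (\ref{eq.stripbound4}) and the conclusion.} From (\ref{eq.stripbound3}), $\left|\sigma(g_{i,j}) - g_{i,j}\right| < \left|\Delta_i\right|\frac{N}{n_i} + 2\left(A+\frac{k}{1-k\alpha}\right)$. Now $\mathcal{D}_1$ requires $|\Delta_i| < n_i - B$, so $|\Delta_i|\frac{N}{n_i} < \left(n_i - B\right)\frac{N}{n_i} = N - \frac{BN}{n_i} \le N - \frac{kB}{1+k\alpha}$, using $\frac{N}{n_i}\ge\frac{k}{1+k\alpha}$ from Step 1. Combining gives (\ref{eq.stripbound4}). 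Finally, since every integer $x$ equals $g_{i,j}$ for a unique pair $(i,j)$, inequality (\ref{eq.stripbound4}) says $|\sigma(x)-x| < N - \frac{kB}{1+k\alpha} + 2A + \frac{2k}{1-k\alpha}$ for all $x$; when $\frac{kB}{1+k\alpha} \ge 2A + \frac{2k}{1-k\alpha}$ the right side is $\le N$, so in fact $|\sigma(x)-x| < N$ for all $x$ (the inequality is strict because the $A$-constraint in $\Seq^{*A}$ and the $\Delta$-constraint are strict), and $\sigma\in\E_N$ by Definition \ref{def.bounded}.

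I don't anticipate a serious obstacle here: the whole lemma is a bookkeeping exercise in the triangle inequality once Step 1 pins down $N/n_i$. The one point requiring a little care is the reduction to $j\in[n_i]$ via periodicity in Step 2 — one must check that the ``ideal'' point $j\frac{N}{n_i}$ shifts by exactly $tN$ when $j$ shifts by $tn_i$, which it does precisely because the spacing is $N/n_i$ rather than $N/(n_i+1)$; this is why the lemma is stated with $N/n_i$ even though the hypothesis $\Seq^{*A}$ is phrased with $N/(n_i+1)$. The only mild subtlety is making sure the final inequality is strict (so that $|\sigma(i)-i|<N$ rather than $\le N$), which follows because at least one of the defining inequalities of $\mathcal{D}_1$ is strict.
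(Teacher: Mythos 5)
Your proof is correct and follows essentially the same approach as the paper's: derive the key estimate $\bigl|\,\ell\frac{N}{n_i+1}-\ell\frac{N}{n_i}\bigr|<\frac{k}{1-k\alpha}$ for $\ell\in[n_i]$, combine it with the $\Seq^{*A}$ definition via the triangle inequality to get (\ref{eq.stripbound1})--(\ref{eq.stripbound2}), use periodicity to extend to all $j\in\mathbb{Z}$, then chain triangle inequalities and the $|\Delta_i|<n_i-B$ bound together with $N/n_i\ge k/(1+k\alpha)$ to get (\ref{eq.stripbound3})--(\ref{eq.stripbound4}). You have simply written out in full the steps the paper leaves to the reader.
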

  
\begin{proof}
First we observe that for every $\ell \in [n_i]$, 
\begin{equation}
    \label{eq.ellNbound}
       \left|   \ell \frac{N}{n_i+1}  \,-\, \ell \frac{N}{n_i}\right|   \;=\;   \frac{\ell N}{n_i(n_i+1)}  \;<\; \frac{N}{n_i}
           \;\leq \;  \frac{N}{\frac{N}{k}-\alpha N}    \;=\;  \frac{k}{1-k\alpha} \,.
\end{equation}
This bound and the definition of ${\cal V}^{**A}_N(\collec{n})$ 
imply Equations (\ref{eq.stripbound1}) and
(\ref{eq.stripbound2}) for $j\in [n_i]$, and the extension to all $j\in \mathbb{Z}$ follows from 
Equation (\ref{eq.hperiodic}).  Equation (\ref{eq.stripbound3}) follows from Equations (\ref{eq.sigma4def}),
(\ref{eq.stripbound1}), and (\ref{eq.stripbound2}).   Equation (\ref{eq.stripbound4}) follows from
Equation (\ref{eq.stripbound3}) and
\[    \left| \Delta_i \frac{N}{n_i}\right|   \;\leq \;  \frac{(n_i-B)N}{n_i}   \;\leq \; 
    \left(1-\frac{B}{\frac{N}{k}+\alpha N}\right)N     \;=\;  N\,-\, \frac{kB}{1+k\alpha}  
\]
(using Definition \ref{def.lower}(c,d)).
The final assertion of the lemma is a consequence of Equation (\ref{eq.stripbound4}).
\end{proof}

We shall now define the restricted domain $\Dom$ on which
$\Psi$ is $k!$-to-one.

\begin{defi}
   \label{def.Dom}
Let $N\in \mathbb{N}$, let $A,B>0$, and let $\alpha\in(0,1/k)$.
Let $\Dom = \Dom(N,\alpha,A,B)$ be the set of $(4k)$-tuples
$(\collec{n},\collec{G},\collec{H},\collec{\Delta})$ 
in $\mathcal{D}_1(N,\alpha,A,B)$  that also satisfy 
\begin{equation}
   \label{eq.Deltadiff}
    \left|   \frac{\Delta_iN}{n_i}  \,-\,  \frac{\Delta_{i'}N}{n_{i'}}\right|   \;>\;  4\left(2A+\frac{2k}{1-k\alpha}\right)
   \hspace{5mm}\hbox{whenever $i,i'\in [k]$ and $i\neq i'$}.
\end{equation}
\end{defi}

\medskip

\begin{lemma}
    \label{lem.V**strips}
Let $N\in \mathbb{N}$, $A,B>0$, and let $\alpha\in(0,1/k)$.
Then the restriction of the function $\Psi$ to $\Dom$ is exactly $k!$-to-1.
\end{lemma}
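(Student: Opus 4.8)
The plan is to show that $\Psi$ restricted to $\Dom$ is $k!$-to-1 by first verifying that the evident $S_k$-action on tuples (permuting the subscripts of $\collec n$, $\collec G$, $\collec H$, $\collec\Delta$ simultaneously) preserves $\Dom$ and fixes the value of $\Psi$, so that each fiber has \emph{at least} $k!$ elements; the real content is showing the fiber has \emph{at most} $k!$ elements, i.e.\ that any two tuples in $\Dom$ with the same image differ only by relabeling the $k$ indices. So suppose $\vec v=(\collec n,\collec G,\collec H,\collec\Delta)$ and $\vec v'=(\collec n',\collec G',\collec H',\collec\Delta')$ both lie in $\Dom$ with $\Psi(\vec v)=\Psi(\vec v')=\sigma$. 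First I would recover the unordered partition $\{\widetilde G_1,\dots,\widetilde G_k\}$ of $\Z$ intrinsically from $\sigma$: by Proposition~\ref{prop.unionincreasing} (really its proof), the rank function of $\sigma$ is determined by $\sigma$ alone, and the blocks $\widetilde G_i$ are the rank level sets. The point is that for a tuple in $\Dom$, the increasing-subsequence decomposition induced by $\collec G$ (namely the doubly-infinite increasing runs $\{\sigma(g_{i,j})\}_{j\in\Z}$ over each periodic class generated by $G_i$) must coincide, as an unordered partition of $\Z$, with the rank-based decomposition. This is where the geometric separation built into $\Dom$ is used.

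The crux, then, is: why is the decomposition into increasing periodic subsequences coming from $\collec G$ \emph{unique} up to reordering, for tuples in $\Dom$? The idea is that the $k$ periodic subsequences are confined to $k$ disjoint diagonal strips that are widely separated. Concretely, Lemma~\ref{lem.V**} (Equations~(\ref{eq.stripbound1})--(\ref{eq.stripbound3})) shows that the $i$-th subsequence lies within a bounded distance — at most $2A+\tfrac{2k}{1-k\alpha}$ — of the line $y = x + \Delta_i N/n_i$, while the separation hypothesis~(\ref{eq.Deltadiff}) in Definition~\ref{def.Dom} guarantees that the centers $\Delta_i N/n_i$ of distinct strips differ by more than $4(2A+\tfrac{2k}{1-k\alpha})$, i.e.\ by more than twice the strip half-width times $2$, so the strips are pairwise disjoint with a genuine gap between them. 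Therefore the value of $\sigma(g)-g$ at any point $g$ of the plot falls into exactly one of $k$ disjoint intervals, and this interval determines which block the point belongs to. Hence the unordered partition $\{\widetilde G_i\}$ of $\Z$ (equivalently the unordered partition $\{G_i\}$ of $[N]$, obtained by intersecting with $[N]$) is forced, and with it $\collec n$ as a multiset; the rank-function argument shows this forced partition is the same for $\vec v$ and $\vec v'$.

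Once the unordered partition $\{G_i\}$ is pinned down by $\sigma$, everything else follows: having fixed a common labeling of the blocks (there are $k!$ choices, which is exactly the multiplicity we want — with a small caveat below about whether the $n_i$ can coincide, which does not matter since we count labeled tuples), the sets $H_i$ are determined as the residues mod $N$ of $\{\sigma(g):g\in G_i\}$ intersected with $[N]$, hence $\collec H$ is determined; and then $\Delta_i$ is determined by $\sigma(g_{i,1}) = h_{i,1+\Delta_i}$ together with~(\ref{eq.hincr}), exactly as in the last paragraph of the proof of Lemma~\ref{lem.sigmaaffine}. So once a labeling is chosen, the tuple is unique, giving at most $k!$ preimages. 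Combined with the lower bound of $k!$ from the $S_k$-symmetry, $\Psi|_{\Dom}$ is exactly $k!$-to-1. The main obstacle I anticipate is the bookkeeping in the separation step: one must be careful that the strip half-width used ($2A+\tfrac{2k}{1-k\alpha}$ from~(\ref{eq.stripbound3})) together with the slack factor $4$ in~(\ref{eq.Deltadiff}) really does produce disjoint, separated intervals for the quantity $\sigma(g)-g$ over \emph{all} $g\in\Z$, not just $g\in[N]$ — but periodicity reduces the all-$g$ statement to the finitely many residues, and the inequalities are set up with exactly enough room.
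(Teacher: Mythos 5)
Your overall strategy — reduce to showing that the unordered partition of $[N]$ into the increasing subsequences is recoverable from $\sigma$ alone, then observe that $\collec H$ and $\collec \Delta$ are determined once the (labeled) partition is fixed, and count the $k!$ relabelings — matches the shape of the paper's argument, and your handling of the ``at least $k!$'' direction and the final recovery of $\collec H$, $\collec \Delta$ is fine. But there is a genuine gap in the recoverability step, and it sits exactly where you flag the ``crux.''

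You propose to identify the strip partition with the rank partition of Proposition~\ref{prop.unionincreasing}, so that its intrinsicality does the work. That identification is a real claim requiring proof: you would need to show that, after ordering the strips by their centers $\Delta_iN/n_i$, every point of the $i$-th strip has rank exactly $i$. This does hold, but verifying it needs a careful induction using both the strip separation \emph{and} the density of each $G_i$ guaranteed by $\Seq^{*A}$ (to find, near any point, a nearby point in the next lower strip with larger abscissa and smaller ordinate), plus a converse bound showing the rank cannot exceed $i$. You assert the coincidence and say ``this is where the geometric separation is used,'' but your second paragraph never carries it out: what you actually establish there is only that the $k$ strips associated to a fixed $\vec v$ are pairwise disjoint with a gap. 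That does not by itself make the partition intrinsic to $\sigma$, because the intervals you describe for $\sigma(g)-g$ are built from $\vec v$'s own data $(\collec n,\collec\Delta)$; a different preimage $\vec v'$ gives different intervals, and disjointness of $\vec v$'s strips says nothing about how $\vec v'$'s strips carve up the plot. The paper's proof closes precisely this gap without ever invoking ranks: it uses the factor $4$ in~(\ref{eq.Deltadiff}) to show the gap between adjacent strips exceeds the full width of a strip, so that \emph{any} strip of the standard half-width $2A+\tfrac{2k}{1-k\alpha}$ (in particular, any strip arising from another preimage in $\Dom$) can meet at most one of $\vec v$'s strips; since the $k$ strips of $\vec v'$ must cover $\textbf{Plot}[\sigma]$, they are forced to induce the same partition of the plot up to relabeling. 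To repair your write-up you should either make that ``any covering strip touches at most one cluster'' step explicit, or actually prove the rank--strip coincidence; as written, neither is done.
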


\begin{proof}
Let
$\vec{v}=(\collec{n},\collec{G},\collec{H},\collec{\Delta})\in \Dom$
and let $\sigma=\Psi(\vec{v})$.  
We also define the (truncated) plot of $\sigma$ to be 
\[        \textbf{Plot}[\sigma]  \;:=\;   \{ (i,\sigma(i))\,:\, i\in[N]\}  \,.   \]


For each real $b$, define 
\[      \textbf{Strip}[b]   \;:=  \;    \left\{ (x,y)\in \mathbb{R}^2:  |y-(x+b)| \,<\, 2A+\frac{2k}{1-k\alpha} \right\} \,,
\]
a diagonal strip shifted vertically by $b$.
For each $i\in[k]$, Equation (\ref{eq.stripbound3}) of Lemma \ref{lem.V**} tells us that
the points $(g_{i,j},\sigma(g_{i,j}))$ ($j=1,\ldots,n_i$) are all in $\textbf{Strip}[\Delta_iN/n_i]$.  Hence
\[     \textbf{Plot}[\sigma] \;\subseteq \;   \bigcup_{i=1}^k  \textbf{Strip}\left[\frac{\Delta_iN}{n_i}\right]  \,.
\]
The purpose of the condition (\ref{eq.Deltadiff}) is to ensure that the $k$ strips $\textbf{Strip}[\Delta_iN/n_i]$
are not only disjoint but also are
separated by at least the width of a strip.

For any real $b$, 
the strip $\textbf{Strip}[b]$ cannot intersect more than one of 
the $k$ strips $\textbf{Strip}[\Delta_iN/n_i]$.  Therefore for any choice of $b_1,\ldots,b_k$ such that
 $\textbf{Plot}[\sigma]$ is contained in $\cup_{i=1}^k \textbf{Strip}[b_i]$, the partition of
 the $N$ points of  $\textbf{Plot}[\sigma]$ into the $k$ parts $\textbf{Plot}[\sigma]\cap \textbf{Strip}[b_i]$
 ($i\in [k]$)  must be the same partition as the one given by $\textbf{Plot}[\sigma]\cap \textbf{Strip}[\Delta_iN/n_i]$ 
 ($i\in[k]$), up to permutation of the $k$ parts.   This partition determines  $G_i$ (the first coordinates
 of the points in the $i^{th}$ part) and $H_i$ (the second coordinates of the points, modulo $N$).
 Finally, $\Delta_i$ is determined by Equation (\ref{eq.sigma4def}). Thus the lemma is proved. \end{proof}
%

\begin{cor}
   \label{cor.Dombound}
Let  $N\in \mathbb{N}$, $A,B>0$, and let $\alpha\in(0,1/k)$.     
Assume $\frac{kB}{1+k\alpha}  \,\geq \,   2A+\frac{2k}{1-k\alpha}$.  Then the function
$\Psi$ maps  $\Dom(N,\alpha,A,B)$ into $\E_N(\deck)$, and
\[    |{\E}_N(\deck)|  \;\geq \; \frac{1}{k!}\,| \Dom(N,\alpha,A,B)| \,.
\]
\end{cor}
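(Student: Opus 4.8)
The plan is to assemble the corollary directly from the three preceding lemmas; there is essentially no new content, only bookkeeping. First I would observe the inclusion chain $\Dom(N,\alpha,A,B) \subseteq \mathcal{D}_1(N,\alpha,A,B) \subseteq \mathcal{D}_0(N)$, which is immediate from Definitions \ref{def.lower}(d) and \ref{def.Dom}. Hence every $\vec{v} \in \Dom(N,\alpha,A,B)$ lies in $\mathcal{D}_0(N)$, so Lemma \ref{lem.sigmaaffine} applies and $\sigma := \Psi(\vec{v})$ is an affine permutation of size $N$ that avoids $\deck$.

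Next I would invoke the boundedness. Since $\vec{v}\in \mathcal{D}_1(N,\alpha,A,B)$, the hypotheses of Lemma \ref{lem.V**} are met, and the final assertion of that lemma says precisely that $\sigma$ is a bounded affine permutation provided $\frac{kB}{1+k\alpha} \geq 2A+\frac{2k}{1-k\alpha}$ — which is exactly the standing assumption of the corollary. Therefore $\sigma \in \E_N$, and combined with the previous paragraph, $\sigma \in \E_N(\deck)$. This shows $\Psi$ maps $\Dom(N,\alpha,A,B)$ into $\E_N(\deck)$, which is the first claim.

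For the counting bound, I would apply Lemma \ref{lem.V**strips}: the restriction of $\Psi$ to $\Dom(N,\alpha,A,B)$ is exactly $k!$-to-$1$, so its image $\Psi(\Dom(N,\alpha,A,B))$ has cardinality $\frac{1}{k!}|\Dom(N,\alpha,A,B)|$. Since we have just shown $\Psi(\Dom(N,\alpha,A,B)) \subseteq \E_N(\deck)$, it follows that
\[
   |\E_N(\deck)| \;\geq\; |\Psi(\Dom(N,\alpha,A,B))| \;=\; \frac{1}{k!}\,|\Dom(N,\alpha,A,B)|,
\]
as desired. The only point requiring a moment of care — and the closest thing to an obstacle — is making sure the hypotheses line up: that $\Dom \subseteq \mathcal{D}_1$ so Lemma \ref{lem.V**}'s boundedness criterion is available, and that Lemma \ref{lem.V**strips} genuinely gives an \emph{exact} $k!$-to-$1$ map rather than merely a bound, so that the image size is computed correctly. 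Both are already established, so the proof is short.
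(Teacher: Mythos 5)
Your proof is correct and follows essentially the same route as the paper's: the first claim from Lemma \ref{lem.sigmaaffine} together with the final assertion of Lemma \ref{lem.V**}, and the counting bound from the exact $k!$-to-$1$ property in Lemma \ref{lem.V**strips}. You've merely spelled out the bookkeeping (the inclusion chain $\Dom\subseteq\mathcal{D}_1\subseteq\mathcal{D}_0$) that the paper's two-sentence proof leaves implicit.
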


\noindent
\begin{proof}  The first assertion follows from
Lemma \ref{lem.sigmaaffine} and
the last sentence in the statement of Lemma \ref{lem.V**}. 
The second assertion follows from Lemma \ref{lem.V**strips}.
\end{proof}

Our job now is to estimate the size of $\Dom(N,\alpha,A,B)$.  

\begin{lemma}
   \label{lem.W**}
Fix $k\geq 2$.
Fix $A>0$, $B>0$, and $\alpha\in(0,1/k)$.
For       
natural numbers $n_1,\ldots,n_k$, 
let $\mathcal{W}(\alpha,A,B,\collec{n})$
be the set of ordered $k$-tuples $(\Delta_1,\cdots,\Delta_k)$ of integers whose sum is 0
and which satisfy Equation (\ref{eq.Deltadiff}) (with $N=n_1+\cdots+n_k$) 
as well as $|\Delta_i|\leq n_i-B$ for each $i$.  
Then we have 
\begin{equation}
    \label{eq.W**0}
      |\Dom(N,\alpha,A,B)|   \;= \;    
   \sum_{\collec{n}\in \mathfrak{N}(N,\alpha)}
  |\mathcal{V}^{**A}(\collec{n})|^2\, | \mathcal{W}(\alpha,A,B,\collec{n})| \,.
\end{equation}
Moreover, let $t_N=\left\lfloor \frac{N}{k}-\alpha N-B\right\rfloor$
and $\Theta_N= 
8A+8k/(1-k\alpha)$ (which is the right-hand side of Equation (\ref{eq.Deltadiff})). 
Then for every $\collec{n}\in\mathfrak{N}(N,\alpha)$, we have
\begin{equation}
   \label{eq.W**1}
     | \mathcal{W}(\alpha,A,B,\collec{n})|  \;\geq \;  Z(t_N,\cdots,t_N)\,-\, \binom{k}{2}(2N)^{k-2}(2\Theta_N+1)\,.
\end{equation}
\end{lemma}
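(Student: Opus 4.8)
Proof proposal for Lemma \ref{lem.W}.**

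The plan is to verify the identity \eqref{eq.W**0} by unpacking Definition \ref{def.Dom}, and then to prove the lower bound \eqref{eq.W**1} by comparing $\mathcal{W}(\alpha,A,B,\collec{n})$ to a set counted exactly by $Z(t_N,\dots,t_N)$, subtracting an overcount for the pairs that violate the separation condition \eqref{eq.Deltadiff}. For \eqref{eq.W**0}: a tuple $(\collec{n},\collec{G},\collec{H},\collec{\Delta})$ lies in $\Dom(N,\alpha,A,B)$ exactly when $\collec{n}\in\mathfrak{N}(N,\alpha)$, $\collec{G}$ and $\collec{H}$ each lie in $\mathcal{V}^{**A}(\collec{n})$, and $\collec{\Delta}$ lies in $\mathcal{W}(\alpha,A,B,\collec{n})$ (the latter being the set of integer tuples summing to $0$ with $|\Delta_i|\le n_i-B$ that also satisfy \eqref{eq.Deltadiff}). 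Since the choices of $\collec{G}$, $\collec{H}$, and $\collec{\Delta}$ are independent once $\collec{n}$ is fixed, summing over $\collec{n}\in\mathfrak{N}(N,\alpha)$ and multiplying $|\mathcal{V}^{**A}(\collec{n})|\cdot|\mathcal{V}^{**A}(\collec{n})|\cdot|\mathcal{W}(\alpha,A,B,\collec{n})|$ gives the stated product. This step is essentially bookkeeping.

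For \eqref{eq.W**1}, fix $\collec{n}\in\mathfrak{N}(N,\alpha)$ and let $\mathcal{W}_0$ denote the set of integer tuples $\collec{\Delta}$ summing to $0$ with $|\Delta_i|\le t_N$ for each $i$, where $t_N=\lfloor N/k-\alpha N-B\rfloor$. Since $\collec{n}\in\mathfrak{N}(N,\alpha)$ we have $n_i-B\ge N/k-\alpha N-B\ge t_N$, so the constraint $|\Delta_i|\le t_N$ implies $|\Delta_i|\le n_i-B$; hence $\mathcal{W}_0$ consists of tuples satisfying all the constraints defining $\mathcal{W}(\alpha,A,B,\collec{n})$ \emph{except possibly} the separation condition \eqref{eq.Deltadiff}. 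By Definition \ref{def.domains}(b), $|\mathcal{W}_0|=Z(t_N,\dots,t_N)$. Therefore
\[
   |\mathcal{W}(\alpha,A,B,\collec{n})| \;\ge\; |\mathcal{W}_0| \;-\; \#\{\collec{\Delta}\in\mathcal{W}_0 : \collec{\Delta}\text{ violates \eqref{eq.Deltadiff}}\} \;=\; Z(t_N,\dots,t_N)\;-\;(\text{bad tuples}).
\]
It remains to bound the number of bad tuples by $\binom{k}{2}(2N)^{k-2}(2\Theta_N+1)$.

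A tuple is bad if there is some pair $i\ne i'$ with $|\Delta_i N/n_i - \Delta_{i'} N/n_{i'}|\le \Theta_N$ (using that $\Theta_N$ is the right-hand side of \eqref{eq.Deltadiff}). By a union bound over the $\binom{k}{2}$ pairs, it suffices to show that for each fixed pair $\{i,i'\}$, the number of bad tuples violating \eqref{eq.Deltadiff} for that pair is at most $(2N)^{k-2}(2\Theta_N+1)$. For this, I would choose the $k-2$ coordinates $\Delta_\ell$ with $\ell\notin\{i,i'\}$ freely — each ranges over at most $2N+1\le 2N$ values (actually $2t_N+1$, bounded crudely by $2N$) — which then fixes $\Delta_i+\Delta_{i'}$ via the sum-zero condition. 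The violating condition for the pair forces $\Delta_i N/n_i - \Delta_{i'} N/n_{i'}$ into an interval of length $2\Theta_N$; combined with the already-determined value of $\Delta_i+\Delta_{i'}$, this pins $\Delta_i$ (equivalently $\Delta_{i'}$) down to an interval of bounded length. The one point requiring a little care is converting the bound on the \emph{weighted} difference $\Delta_i N/n_i - \Delta_{i'} N/n_{i'}$ into a bound on $\Delta_i$ itself: since $n_i,n_{i'}\ge N/k-\alpha N>0$ and the map $(\Delta_i,\Delta_{i'})\mapsto(\Delta_i+\Delta_{i'},\, \Delta_i N/n_i-\Delta_{i'}N/n_{i'})$ is an invertible linear map with determinant $N/n_i+N/n_{i'}\ge 1$, the preimage of a set of "width" $2\Theta_N$ in the second coordinate (with the first coordinate fixed) has at most $\lfloor 2\Theta_N\rfloor+1\le 2\Theta_N+1$ integer points. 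Multiplying the $(2N)^{k-2}$ free choices by this $(2\Theta_N+1)$ and then by $\binom{k}{2}$ for the union bound yields the claimed estimate, completing the proof.

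The main obstacle is the last step: one must be slightly careful that fixing the $k-2$ "free" coordinates and the pair-sum $\Delta_i+\Delta_{i'}$ genuinely reduces the count of admissible $(\Delta_i,\Delta_{i'})$ to an interval of integer length at most $2\Theta_N+1$, i.e.\ that the change of variables does not inflate the count. Everything else — the decomposition \eqref{eq.W**0} and the containment $\mathcal{W}_0\subseteq\{\text{tuples satisfying the non-separation constraints}\}$ with $|\mathcal{W}_0|=Z(t_N,\dots,t_N)$ — is routine.
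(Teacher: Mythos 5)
Your proof is correct and follows essentially the same structure as the paper's: replace $\mathcal{W}(\alpha,A,B,\collec{n})$ by a set counted by (or bounded below by) $Z(t_N,\ldots,t_N)$, then subtract a union-bound count over pairs $\{i,i'\}$ of tuples violating the separation condition \eqref{eq.Deltadiff}, using the fact that once all but one of $\Delta_i,\Delta_{i'}$ is fixed the violating condition confines the remaining one to an interval with at most $2\Theta_N+1$ integers. The only presentational difference is the order of elimination: the paper picks $\Delta_i$ first and then bounds $\Delta_{i'}$ directly (since $n_{i'}\le N$, the interval for $\Delta_{i'}$ already has length $\le 2\Theta_N$), which sidesteps the change-of-variables step you correctly flag as the delicate point in your version.
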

\begin{proof}
Equation (\ref{eq.W**0}) follows from the definition of $\Dom$.

Let $\mathcal{W}^-$ be the set of ordered $k$-tuples
$\collec{\Delta}$ of integers whose 
sum is 0 and satisfy $|\Delta_i|\leq n_i-B$ for each $i$. 
By our assumptions, we have $n_i-B\geq t_N$ for each $i$, and hence 
$|\mathcal{W}^-|\,\geq \,Z(t_N,\ldots,t_N)$ (since $Z$ is nondecreasing in each argument). 
Now, for each two-element subset $\{i,i'\}$ of $[k]$, the number of $k$-tuples $\collec{\Delta}$ in $\mathcal{W}^-$
that violate Equation (\ref{eq.Deltadiff}) is at most $(2n_i)(2\Theta_N+1)(2N)^{k-3}$
(first choose $\Delta_i$, then $\Delta_{i'}$, then $\Delta_j$ for $k-3$ of the remaining indices $j$ in $[k]$;
the final $\Delta_j$ is determined because $\sum_j \Delta_j = 0$).   
Equation (\ref{eq.W**1}) follows.
\end{proof}

The main task that remains is to get a lower bound on $|\mathcal{V}^{**A}(\collec{n})|$.  This is
accomplished by the following lemma. 
It is an adaptation of part of Lemma 21 in \cite{MP}.

\medskip

\begin{lemma}
   \label{lem.sequnifB}
Fix $k\geq 2$. Let $A>0$ and let $\alpha\in(0,1/k)$.  Then there exist positive constants $C(\alpha)$ and $\tilde{N}(\alpha)$
such that 
\begin{equation*}
   \frac{|\mathcal{V}^{**A}(n_1,\ldots,n_k)|}{ \binom{N}{n_1,\cdots,n_k} }
      \;\geq \; \; 1-\frac{C(\alpha)N^{3/2}}{A^2}  
     \hspace{5mm} \hbox{ whenever } N\,\geq \,\tilde{N}(\alpha) 
     \hbox{ and }\collec{n}\in \mathfrak{N}(N,\alpha).
\end{equation*}
\end{lemma}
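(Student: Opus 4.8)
The plan is to estimate, for a fixed $\collec{n} \in \mathfrak{N}(N,\alpha)$, the probability that a uniformly random ordered set partition $(G_1,\ldots,G_k)$ of $[N]$ with $|G_i| = n_i$ has \emph{all} blocks lying in the ``nearly uniform spacing'' families $\Seq^{*A}(n_i;N)$; the number of such partitions is exactly $|\mathcal{V}^{**A}(\collec{n})|$ and the total number is the multinomial coefficient $\binom{N}{n_1,\ldots,n_k}$, so the ratio in question is precisely this probability. By a union bound over $i \in [k]$ it suffices to show that for each fixed $i$, a uniformly random $n_i$-element subset $G_i$ of $[N]$ fails to lie in $\Seq^{*A}(n_i;N)$ with probability at most $C'(\alpha) N^{3/2}/A^2$; summing over the $k$ (bounded) indices then yields the claimed bound with $C(\alpha) = kC'(\alpha)$.

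First I would fix $i$ and let $G_i = \{x(1) < \cdots < x(n_i)\}$ be a uniformly random $n_i$-subset of $[N]$. The event $G_i \notin \Seq^{*A}(n_i;N)$ is $\bigcup_{\ell=1}^{n_i} \bigl\{ |x(\ell) - \ell\tfrac{N}{n_i+1}| \geq A \bigr\}$. The key classical fact is that $x(\ell)$, the $\ell$-th order statistic of a random $n_i$-subset of $[N]$, has mean close to $\ell\tfrac{N}{n_i+1}$ and variance of order $N^2/n_i$ — one can see this by comparison with the Beta distribution, or by a direct computation using that the hypergeometric-type distribution of $x(\ell)$ has an explicit first and second moment. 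Concretely, $x(\ell)$ is distributed as a shifted negative-hypergeometric variable, and $\mathrm{Var}(x(\ell)) \leq c \cdot \ell(n_i - \ell + 1) N^2 / n_i^3 \leq c N^2/n_i$ for an absolute constant $c$. Since $\collec{n} \in \mathfrak{N}(N,\alpha)$ forces $n_i \geq (1/k - \alpha)N$, this gives $\mathrm{Var}(x(\ell)) \leq c k N/(1-k\alpha) =: c_1(\alpha) N$. By Chebyshev's inequality, $\Pr(|x(\ell) - \mathbb{E}x(\ell)| \geq A/2) \leq 4 c_1(\alpha) N / A^2$, and since the difference between $\mathbb{E}x(\ell)$ and $\ell\tfrac{N}{n_i+1}$ is $O(1)$ (indeed $O(N/n_i) = O(1/(1-k\alpha))$, which is $< A/2$ once $A$ exceeds a constant depending only on $\alpha$), we get $\Pr(|x(\ell) - \ell\tfrac{N}{n_i+1}| \geq A) \leq 4 c_1(\alpha) N/A^2$ for each $\ell$. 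Summing over the at most $N$ values of $\ell$ (and using a union bound again) gives $\Pr(G_i \notin \Seq^{*A}(n_i;N)) \leq 4 c_1(\alpha) N^2/A^2$.

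This last bound is $N^2/A^2$, not $N^{3/2}/A^2$, so a naive union bound over all $\ell$ is too lossy by a factor of $\sqrt N$. The main obstacle — and the step where I expect to import the refinement of Lemma 21 of \cite{MP} — is to replace the crude union bound over $\ell$ with a maximal-inequality argument. The point is that the sequence of deviations $\ell \mapsto x(\ell) - \ell\tfrac{N}{n_i+1}$ behaves like a (discrete bridge of a) random walk, so one should control $\max_\ell |x(\ell) - \ell\tfrac{N}{n_i+1}|$ directly rather than term by term. A Kolmogorov-type maximal inequality (or Doob's inequality applied to an appropriate martingale built from the order statistics, e.g.\ via the representation of $G_i$ through i.i.d.\ uniforms conditioned on their sum, or via exchangeable increments) bounds $\Pr\bigl(\max_{\ell \leq n_i} |x(\ell) - \mathbb{E}x(\ell)| \geq A/2\bigr)$ by $O(\mathrm{Var}(x(n_i))/A^2)$ up to the bridge correction, but the relevant quantity for the whole maximum scales like $N^{3/2}$ because the walk has $n_i \asymp N$ steps each of variance $\asymp 1$, and the maximum over a bridge of length $n_i$ of such increments concentrates at scale $\sqrt{n_i} \cdot \sqrt{N/n_i}$-type corrections; carrying this out carefully gives the $N^{3/2}/A^2$ rate claimed. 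I would follow \cite{MP} for the precise form of this maximal inequality, extract the constant $C(\alpha)$ from the $\alpha$-dependence of the variance bound and the threshold $\tilde N(\alpha)$ from the requirement that the $O(1)$ mean-shift be absorbed, and then conclude by the union bound over $i \in [k]$ as described above.
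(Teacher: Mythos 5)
Your plan gets the framing right: interpret the ratio as the probability that a random ordered partition has every block in $\Seq^{*A}$, union-bound over $i \in [k]$, and note that a naive Chebyshev-plus-union-over-$\ell$ gives only $O(N^2/A^2)$, so a maximal inequality is needed. Up to that point you match the paper.

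The gap is at the crucial step. You recognize that a Kolmogorov/Doob-type maximal inequality is the right tool, but you do not identify the mechanism the paper actually uses, and your own heuristic for the rate is internally inconsistent. You describe the increments of $\ell \mapsto x(\ell)$ as a bridge of $\asymp N$ steps with $O(1)$ increments, whose maximum deviation ``concentrates at scale $\sqrt{n_i}\cdot\sqrt{N/n_i}$'' --- but that is $\sqrt{N}$, and Chebyshev on a random variable of order $\sqrt{N}$ gives $O(N/A^2)$, not $O(N^{3/2}/A^2)$. You then assert that carrying this out gives $N^{3/2}/A^2$ without explaining where the extra $\sqrt{N}$ comes from. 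Moreover, you cannot simply apply Kolmogorov's inequality to the sequence $x(\ell)$ (or to the bridge increments), because conditioned on summing to $N+1$ the gaps are not a sum of independent increments; Kolmogorov requires independence, and Doob requires a martingale, neither of which you exhibit.

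The paper resolves both issues with a decoupling trick. It represents a uniformly random $w$-subset as the first $w$ partial sums $T_1,\ldots,T_w$ of i.i.d.\ geometrics with parameter $p=(w+1)/N$, \emph{conditioned} on $T_{w+1}=N+1$ (Eq.\ (\ref{eq.geomjointB})). Rather than try to apply a maximal inequality to the conditioned (dependent) sequence, it bounds the conditional probability by the unconditional one divided by $\Pr(T_{w+1}=N+1)$ (Eq.\ (\ref{eq.ratioprobB})). Now the numerator is the maximum of a genuine sum of independent variables, so Kolmogorov's inequality applies cleanly and gives $\mathrm{Var}(T_w)/A^2 = O(N/A^2)$; the denominator is $\asymp N^{-1/2}$ by a Stirling estimate (Eq.\ (\ref{eq.denomB})). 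That is precisely where the extra $\sqrt{N}$ in the lemma's $N^{3/2}/A^2$ rate originates --- it is a price paid for removing the conditioning, not an intrinsic feature of the bridge. (Also note that choosing $p=(w+1)/N$ makes $\ell/p = \ell N/(w+1)$, the exact centering in the definition of $\Seq^{*A}$, so the paper has no $O(1)$ mean-shift to absorb and hence no lower constraint on $A$ beyond what enters through $\tilde{N}(\alpha)$.) If you prefer to work with the bridge directly, you would need to construct an honest martingale from the exchangeable gaps and apply Doob there; that could plausibly give the sharper rate $O(N/A^2)$, which would still imply the lemma, but you would need to carry it out --- as written, the proposal neither supplies that martingale nor supplies the paper's decoupling.
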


\begin{proof}
We shall prove the lemma by converting it into a probabilistic statement.
Fix $N$, and choose $\collec{n}\in \mathfrak{N}(N,\alpha)$.
Now, choose $(\textbf{G}_1,\ldots,\textbf{G}_k)$ uniformly at random 
from the collection of all $\binom{N}{n_1,\cdots,n_k}$ 
partitions of $\{1,\ldots,N\}$ for which the $i^{th}$ part has size $n_i$.
For each $i$, by symmetry, the random set $\textbf{G}_i$ is 
uniformly distributed on the 
collection of all $n_i$-element subsets of $\{1,\ldots,N\}$.
It follows that
\begin{align*}
  1\;-\; \frac{|\mathcal{V}^{**A}(n_1,\ldots,n_k)|}{
    \binom{N}{n_1,\cdots,n_k} }
    \; & = \;   \Pr\left(\textbf{G}_i\not\in \Seq^{*A}(n_i,N) \hbox{ for some }i\right) 
       \\
    & \leq \;   \sum_{i=1}^k   \Pr\left(\textbf{G}_i\not\in \Seq^{*A}(n_i,N) \right)    
    \\
    & = \;   \sum_{i=1}^k  \left( 1\,-\,  \frac{|\Seq^{*A}(n_i;N)|}{\binom{N}{n_i} } \right)\,.
\end{align*}
We shall complete the proof by deriving an upper bound on $1-|\Seq^{*A}(w;N)|/\binom{N}{w}$, assuming that
$\frac{1}{k}-\alpha\leq \frac{w}{N}\leq \frac{1}{k}+\alpha$
(which is satisfied for $w=n_i$, since $\collec{n}\in \mathfrak{N}(N,\alpha)$).

Let $p\in (0,1)$.  Let $X_1,X_2,\ldots$ be a sequence of independent
random variables having the  geometric distribution with parameter $p$;
that is, $\Pr(X_i=\ell)=p(1-p)^{\ell-1}$ for $\ell=1,2,\ldots$.
Next, let $T_i=X_1+X_2+\cdots+X_i$ for each $i$.
These random variables have negative binomial distributions
\begin{equation}
   \label{eq.negbinB}
    \Pr(T_{j+1}=\ell+1)  \;=\;  \binom{\ell}{j}p^{j+1}(1-p)^{\ell-j}  
    \hspace{5mm}\hbox{for $\ell \geq j$}.
\end{equation}
Moreover, for any $\vec{x}\in \Seq(w;N)$ (writing $x(0)=0$ and $x(w+1)=N+1$),
\begin{align}
   \nonumber
   \Pr(T_{\ell}=x(\ell) \hbox{ for }\ell=1,\ldots,w \,|\, T_{w+1}=N{+}1)
    \; & =\;  \frac{ \prod_{\ell=1}^{w+1} p(1-p)^{x(\ell)-x(\ell-1)-1} }{
         \binom{N}{w}p^{w+1}(1-p)^{N-w} }
   \\  \label{eq.geomjointB}
     & = \;   \binom{N}{w}^{-1}  \,.
\end{align}  
Equation (\ref{eq.geomjointB}) says that
\textit{the conditional distribution of $(T_1,\ldots,T_w)$, given 
that $T_{w+1}=N+1$, is precisely the uniform distribution on $\Seq(w;N)$.}
This assertion is true for any $p$.  Let us now fix $p=(w+1)/N$; we shall
soon see why this is a convenient choice.

By Equation (\ref{eq.geomjointB}),  
\begin{equation}
    \nonumber
   \frac{ |\Seq^{*A}(w;N)|}{\binom{N}{w}}  \; = \;
     \Pr\left( |T_{\ell}-\ell/p| < A \hbox{ for }l=1,\ldots,w \,|\, 
       T_{w+1}=N+1  \right)   \,.
\end{equation}
and therefore
\begin{equation}
    \label{eq.ratioprobB}
   0\;\leq \; 1 \,-\, \frac{ |\Seq^{*A}(w;N)|}{\binom{N}{w}}  
   \; \leq  \; \frac{ \Pr(\max_{\ell=1,\ldots,w}|T_{\ell}-\ell/p| \geq  A )}{
        \Pr(T_{w+1}=N+1)} .
\end{equation} 

From Stirling's Formula $m!\sim \sqrt{2\pi m}\,(m/e)^m$, we see that there is a 
constant $C_s>0$  such that 
\[     \frac{1}{C_s}\,\frac{m^{m+1/2}}{e^m}  \;\leq \;   m!  \;\leq\;    C_s\, \frac{m^{m+1/2}}{e^m} 
    \hspace{5mm}\hbox{for every positive integer $m$.}
\]
It follows from these bounds and Equation (\ref{eq.negbinB}) that
\begin{align}
    \nonumber
   \Pr(T_{w+1}= & N+1)  \; =\;
     \frac{N!}{w!(N-w)!}  \frac{(w+1)^{w+1}(N-w-1)^{N-w}}{N^{N+1} }    \\
    \nonumber
    & \geq  \;  \frac{N^{N+1/2}}{ C_s^{3} w^{w+1/2}(N-w)^{N-w+1/2}}
      \frac{(w+1)^{w+1}(N-w-1)^{N-w}}{N^{N+1} } 
              \\
   &  \geq \;   \frac{\sqrt{w}}{C_s^3 \sqrt{N}\sqrt{N-w}} \left(\frac{w+1}{w}\right)^{w+1}
          \left(1-\frac{1}{N-w}\right)^{N-w}\,.
\label{eq.denom0B}
\end{align}
By calculus, one can show that $\left(1-\frac{1}{t}\right)^t \geq \frac{1}{4}$ whenever $t\geq 2$.
Therefore,  we conclude from (\ref{eq.denom0B}) that
\begin{align}
    \Pr(T_{w+1}=  N+1)        \; \geq \; & \frac{1}{C_s^3}  \,
    \sqrt{ \frac{ \frac{1}{k}-\alpha}{
  \left(1-\frac{1}{k}+\alpha\right)N}} \times 1 \times \frac{1}{4}  
         \label{eq.denomB}
         \\
         \nonumber
        & \hspace{5mm}\hbox{if $N-w\geq 2$ and 
        $\frac{1}{k}-\alpha  \;\leq \; \frac{w}{N}  \;\leq \;  \frac{1}{k}+\alpha$}. 
\end{align}
Observe that under the constraints on $w/N$,
the condition $N\geq 2/(1-\frac{1}{k}-\alpha)$ implies $N-w\geq 2$.

Since the random 
variables $X_i$ have mean $1/p$ and variance $(1-p)/p^2$, we also have
\[    E(T_{\ell}) \;=\; \frac{\ell}{p}    \hspace{5mm} \hbox{and}\hspace{5mm}
       \textup{Var}(T_{\ell})  \;=\;   \frac{ \ell (1-p)}{p^2} \,.   
\]
For the numerator of the right-hand side of Equation (\ref{eq.ratioprobB}),
we use Kolmogorov's Inequality (see for example section IX.7 of \cite{Fe}), 
which may be viewed as a strengthening
of Chebychev's Inequality that is applicable to sums of independent random variables.
\begin{align}
   \nonumber
  \Pr\left(\max_{\ell=1,\ldots,w}|T_{\ell}-\ell/p| \geq  A \right)
    \;& \leq \;  \frac{\textup{Var}(T_w)}{A^2}   \;=\;  \frac{w(1-p)/p^2}{A^2} \\
     & \leq   \;   \frac{w N^2}{(w+1)^2A^2}   \;<\;   \frac{N^2}{wA^2}   \nonumber  \\
     & \leq \;  \frac{N}{\left(\frac{1}{k}-\alpha\right)A^2} \,.
    \label{eq.numerB}
\end{align}
Applying  Equations (\ref{eq.denomB}) and (\ref{eq.numerB}) 
to Equation (\ref{eq.ratioprobB})  shows that 
\[    1 \,-\, \frac{ |\Seq^{*A}(w;N)|}{\binom{N}{w}}  \;\leq \;   
  \frac{4C_s^3\,N^{3/2}}{ \left(\frac{1}{k}-\alpha\right)A^2}
    \sqrt{  \frac{ 1-\frac{1}{k}+\alpha}{\frac{1}{k}-\alpha} } \,.
\]
Taking $\tilde{N}(\alpha)=2/\left(1-\frac{1}{k}-\alpha\right)$ and 
$C(\alpha)= 4kC_s^3\sqrt{\left(1-\frac{1}{k}+\alpha\right) \left(\frac{1}{k}-\alpha\right)^{-3}}$ , 
the proof of Lemma \ref{lem.sequnifB} is now complete.
\end{proof}

We can now complete the proof of Theorem \ref{thm.E4321}.

\begin{prop}
   \label{prop.4321lower}
Fix $k\geq 2$.  Then   
\[    \liminf_{N\rightarrow\infty}  
     \frac{|  {\E}_N(\deck)|}{k^{2N}N^{(k-1)/2} }  \;\geq \; 
       \frac{Z^*_k}{k^{k/2}  \,(4\pi)^{(k-1)/2}\,(k-1)!} \,.
\]
\end{prop}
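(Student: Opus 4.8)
The plan is to combine the lower bound $|\E_N(\deck)| \ge \frac{1}{k!}|\Dom(N,\alpha,A,B)|$ from Corollary \ref{cor.Dombound} with the counting formula for $|\Dom(N,\alpha,A,B)|$ from Lemma \ref{lem.W**}, then choose the parameters $\alpha$, $A$, $B$ to depend on $N$ in a way that makes all the error terms negligible while the main term matches the target. Concretely, from Equation (\ref{eq.W**0}) we have
\[
|\Dom(N,\alpha,A,B)| \;=\; \sum_{\collec{n}\in\mathfrak{N}(N,\alpha)} |\mathcal{V}^{**A}(\collec{n})|^2\,|\mathcal{W}(\alpha,A,B,\collec{n})|.
\]
I would bound $|\mathcal{V}^{**A}(\collec{n})|^2 \ge \binom{N}{n_1,\ldots,n_k}^2\bigl(1-C(\alpha)N^{3/2}/A^2\bigr)^2$ using Lemma \ref{lem.sequnifB}, and $|\mathcal{W}(\alpha,A,B,\collec{n})| \ge Z(t_N,\ldots,t_N) - \binom{k}{2}(2N)^{k-2}(2\Theta_N+1)$ using Equation (\ref{eq.W**1}), where $t_N = \lfloor N/k - \alpha N - B\rfloor$ and $\Theta_N = 8A + 8k/(1-k\alpha)$.

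The parameter choice is the crux. I would take, say, $A = N^{4/5}$ (so $N^{3/2}/A^2 = N^{-1/10}\to0$, killing the $\mathcal{V}^{**}$ error), $B = N^{4/5}$ or a constant multiple thereof so that the inequality $kB/(1+k\alpha) \ge 2A + 2k/(1-k\alpha)$ required by Corollary \ref{cor.Dombound} holds for large $N$, and $\alpha = \alpha_N \to 0$ slowly, e.g. $\alpha_N = N^{-1/10}$. With these choices: (i) $t_N = N/k - o(N)$, so by Corollary \ref{cor.andre1}, $Z(t_N,\ldots,t_N) \sim t_N^{k-1} Z^*_k \sim (N/k)^{k-1}Z^*_k$ — more carefully, $Z(t_N,\ldots,t_N)/(N/k)^{k-1} \to Z^*_k$ since $t_N/(N/k)\to1$; (ii) the subtracted term $\binom{k}{2}(2N)^{k-2}(2\Theta_N+1) = O(N^{k-2}\cdot N^{4/5}) = O(N^{k-2+4/5}) = o(N^{k-1})$, so $|\mathcal{W}| \ge (N/k)^{k-1}Z^*_k(1-o(1))$; (iii) the $\mathcal{V}^{**}$ correction factor is $1-o(1)$; and (iv) $\mathfrak{N}(N,\alpha_N)$ still contains the bulk of the multinomial mass, so by Lemma \ref{lem.alpha3} (Equation (\ref{eq.Hoeff2}) — though I must check its proof works with $\alpha$ varying slowly, which it does since Hoeffding gives $2ke^{-2N\alpha_N^2} = 2ke^{-2N^{4/5}}\to0$) and Theorem \ref{thm.richmond},
\[
\sum_{\collec{n}\in\mathfrak{N}(N,\alpha_N)}\binom{N}{n_1,\ldots,n_k}^2 \;\sim\; k^{2N+k/2}(4\pi N)^{(1-k)/2}.
\]

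Assembling these: since $|\mathcal{W}(\alpha_N,A,B,\collec{n})| \ge (N/k)^{k-1}Z^*_k(1-o(1))$ uniformly over $\collec{n}\in\mathfrak{N}(N,\alpha_N)$, and pulling this lower bound out of the sum,
\[
|\Dom| \;\ge\; (N/k)^{k-1}Z^*_k(1-o(1))\sum_{\collec{n}\in\mathfrak{N}(N,\alpha_N)}\binom{N}{n_1,\ldots,n_k}^2(1-o(1)),
\]
which is asymptotically $(N/k)^{k-1}Z^*_k \cdot k^{2N+k/2}(4\pi N)^{(1-k)/2} = Z^*_k k^{2N} k^{k/2} k^{-(k-1)} N^{k-1}(4\pi N)^{(1-k)/2} = Z^*_k k^{2N} k^{k/2 - k + 1}(4\pi)^{(1-k)/2}N^{(k-1)/2}$. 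Dividing by $k!$ and simplifying the power of $k$ (namely $k^{k/2-k+1}/k! = k^{k/2}\cdot k^{1-k}/k! = k^{k/2}/(k^{k-1}k!)$; and $k^{k-1}k! $ — hmm, I need $k^{k/2}/(k-1)!$, so I should double-check: $k^{k/2-k+1} = k^{k/2}\cdot k^{1-k}$ and $k^{1-k}/k! \cdot k^{?}$... actually $k\cdot(k-1)! = k!$ so $k^{1-k}\cdot k/k! = k^{2-k}/k!$, this needs care but is pure bookkeeping) gives exactly $\frac{Z^*_k}{k^{k/2}(4\pi)^{(k-1)/2}(k-1)!}$ as the liminf lower bound.

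The main obstacle I anticipate is verifying the two-sided control needed when $\alpha = \alpha_N$ varies with $N$: specifically, confirming that Lemma \ref{lem.alpha3} and Theorem \ref{thm.richmond} still combine to give the exact asymptotic $\sum_{\collec{n}\in\mathfrak{N}(N,\alpha_N)}\binom{N}{\collec{n}}^2 \sim k^{2N+k/2}(4\pi N)^{(1-k)/2}$ — the proof of Lemma \ref{lem.alpha3} uses a fixed $\alpha$, so I would need $\alpha_N \to 0$ slowly enough that $e^{-2N\alpha_N^2}$ still decays (hence $\alpha_N \gg N^{-1/2}$, e.g. $N^{-1/10}$ works) but the constant $C(\alpha_N)$ from Lemma \ref{lem.sequnifB} — which blows up as $\alpha \to 1/k$ but stays bounded as $\alpha\to 0$ — and the denominator bound (\ref{eq.denomB}) remain harmless. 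A cleaner alternative that sidesteps the varying-$\alpha$ issue: fix $\alpha$ small, take $A = A_N \to\infty$ and $B = B_N\to\infty$ both $o(N)$ (e.g. $A_N = B_N = N^{2/3}$), prove the liminf is at least $\frac{1}{k!}(1/k+\alpha)^{-(k-1)}\cdot(\text{something})$... no — in fact with $\alpha$ fixed, $t_N = \lfloor N/k - \alpha N - B_N\rfloor \sim (1/k - \alpha)N$, so $Z(t_N,\ldots,t_N) \sim (1/k-\alpha)^{k-1}N^{k-1}Z^*_k$, giving a liminf bound with an extra factor $(1 - k\alpha)^{k-1}$; then let $\alpha \downarrow 0$ at the end. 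This is the safer route and mirrors exactly how Proposition \ref{prop.4321upper} handled the corresponding $(1/k+\alpha)^{k-1}$ factor, so I would present the proof this way: fix $\alpha$, derive the bound with the $(1-k\alpha)^{k-1}$ penalty, and conclude by taking $\alpha\to0$.
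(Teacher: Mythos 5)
Your proposal follows the paper's proof strategy exactly: the same chain Corollary \ref{cor.Dombound} $\to$ Equations (\ref{eq.W**0}) and (\ref{eq.W**1}) $\to$ Lemma \ref{lem.sequnifB} $\to$ Corollary \ref{cor.andre1}, Lemma \ref{lem.alpha3}, and Theorem \ref{thm.richmond}, and the same device of deriving a lower bound with an $\alpha$-dependent penalty and then sending $\alpha\downarrow 0$. The only structural difference is the choice of $A$ and $B$: the paper takes both linear in $N$ ($A=\alpha N$, $B=2\alpha N$), whereas your final recommendation takes them sublinear.

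That final recommendation, however, contains two concrete errors. First, $A_N=N^{2/3}$ is too small for Lemma \ref{lem.sequnifB}: the relative error there is $C(\alpha)N^{3/2}/A^2 = C(\alpha)N^{1/6}\to\infty$, so the lower bound on $|\mathcal{V}^{**A}(\collec{n})|$ degenerates. You need $A\gg N^{3/4}$, so your earlier suggestion $N^{4/5}$ works but $N^{2/3}$ does not. Second, $A=B$ violates the hypothesis $\frac{kB}{1+k\alpha}\geq 2A+\frac{2k}{1-k\alpha}$ of Corollary \ref{cor.Dombound} when $k=2$: for large $A$ this would require $\frac{k}{1+k\alpha}\geq 2$, i.e.\ $k\geq 2+2k\alpha$, which is false for $k=2$ and any $\alpha>0$. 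The paper's choice $B=2A$ avoids this, since then the large-$A$ reduction is $\frac{2k}{1+k\alpha}\geq 2$, which holds for all $\alpha<(k-1)/k$. With those two repairs (say $A=N^{4/5}$, $B=2A$, fixed small $\alpha$, then $\alpha\downarrow 0$) your argument closes and, as you anticipated, mirrors the handling of the $\alpha$-penalty in Proposition \ref{prop.4321upper}. The paper's linear choice $A=\alpha N$, $B=2\alpha N$ simply trades an $o(N^{k-1})$ error in (\ref{eq.W**1}) for an $O(\alpha N^{k-1})$ one, which is equally harmless once $\alpha\downarrow 0$.
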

\begin{proof} 
Let $\alpha \in (0,1/(3k))$ (we are really interested in the limit as $\alpha$ decreases to 0).  
Let $A=\alpha N$ and $B=2\alpha N$.   
For each $\collec{n}\in \mathfrak{N}(N,\alpha)$,
the inequality of Equation (\ref{eq.W**1}) becomes
\begin{equation}
   \label{eq.W**11}
   | \mathcal{W}(\alpha,A,B,\collec{n})|  \;\geq \;  t_N^{k-1}Z^*_k(1+o(1)) \,-\, \binom{k}{2}\left[(2N)^{k-2}(16\alpha N)+O(N^{k-2})\right]  
\end{equation}
with the help of Corollary \ref{cor.andre1}.  Recalling that $t_N=\lfloor (N/k)-\alpha N-B\rfloor$, we obtain 
from Equation (\ref{eq.W**0}) and Lemma \ref{lem.sequnifB} that 
\begin{align}
   \nonumber
     |\Dom (N,\alpha,A,B)|  \; \;\geq  \;  
     &  \sum_{\collec{n}\in \mathfrak{N}(N,\alpha)} 
    \binom{N}{n_1,\cdots,n_k}^2 \left(1-\frac{C(\alpha)}{\alpha^2\sqrt{N}}\right)^2  \times 
   \\    
    & \hspace{22mm} N^{k-1}\left[  
     \left(\frac{1}{k}-3\alpha\right)^{k-1} Z_k^*-
       \binom{k}{2}2^{k+2}\alpha +o(1)\right]  \,.
       \label{eq.W**01}
 \end{align}
 Observe that the assumption of Corollary \ref{cor.Dombound} holds for large $N$ because $B=2A$.
 Therefore, applying Corollary \ref{cor.Dombound}, Equation  (\ref{eq.Hoeff2}), and Theorem \ref{thm.richmond} to Equation
 (\ref{eq.W**01}) gives 
\begin{eqnarray}
   \nonumber
 \liminf_{N\rightarrow\infty}  
    \frac{| {\E}_N(\deck)|}{N^{k-1} k^{2N+k/2}(4\pi N)^{(1-k)/2} } 
    & \geq & 
    \liminf_{N\rightarrow\infty}  
    \frac{| \Dom(N,\alpha,\alpha N, 2\alpha N)|
      }{k! \,N^{k-1} k^{2N+k/2}(4\pi N)^{(1-k)/2} } 
    \\
    \label{eq.SNDomsqueeze}
    &\geq &  \frac{1}{k!}
     \left[   \left(\frac{1}{k}-3\alpha\right)^{k-1} Z^*_k-\binom{k}{2}2^{k+2}\alpha\right]  \,.
 \end{eqnarray}
Since this inequality holds for arbitrarily small positive $\alpha$, 
the proposition follows.
\end{proof}

Finally, Theorem \ref{thm.E4321} follows immediately from
Propositions \ref{prop.4321upper} and \ref{prop.4321lower}.

\section{Weak convergence}
   \label{sec.weak}

In this section we present a convergence result for 
${\E}_N(\deck)$ in the spirit of permutons.  Section
\ref{sec.weakintro} describes the measure-theoretic
framework that we use, including an introduction to 
the Wasserstein distance, and presents the formal statement 
of the main theorem of this section along with the 
strategy of its proof.  Section \ref{sec.Wass}
presents some basic properties of Wasserstein distances 
that we shall need, particularly in the context of mixtures
(i.e.\ convex combinations) of probability measures.
Section \ref{sec.wassproof} proves the main result, 
following the strategy described in Section \ref{sec.weakintro}.

\subsection{Overview and statement of the main result}
    \label{sec.weakintro}

We start with some terminology and notation about measures.
We denote the set of all probability measures on a set $\chi$
by PM$(\chi)$.  (We should refer to the set of all probability 
measures on a measurable space, but the $\sigma$-algebra 
associated with $\chi$ will always be implicit and unambiguous.)
For $x\in \chi$, let $\delta_{x}$ be the measure on subsets of $\chi$
that assign value 1 to every set containing the point $x$ and value 0 to every other set.
We call $\delta_{x}$ the ``point mass at $x$.''

For a permutation $\sigma$ of size $N$, the ``empirical measure of $\sigma$'' is the 
measure $\mu_{\sigma}$ on $\R^2$ defined by
\begin{equation}
      \label{eq.musigma}
  \mu_{\sigma}   \; :=\;   \frac{1}{N}\sum_{i=1}^N  \delta_{(i,\sigma(i))}   \,.    
\end{equation}
Observe that $\mu_{\sigma}$ has total mass 1, i.e.\ it is a probability measure.  We can think of $\mu_{\sigma}$ as describing
the selection of one point of the plot of $\sigma$ uniformly at
random.
We also define the scaled empirical  measure $\hat{\mu}_{\sigma}$ by scaling $[0,N]^2$ down to 
the unit square:
\begin{equation}
     \label{eq.muhatsigma}
   \hat{\mu}_{\sigma}   \; :=\;   \frac{1}{N}\sum_{i=1}^N  \delta_{(i/N,\,\sigma(i)/N)}   \,.
\end{equation}
A permuton is a probability measure on $[0,1]^2$ whose marginal distributions are each
the uniform measure on $[0,1]$ (in the sense of 
equation (\ref{eq.marginal}) below).  
It is of interest to find (possibly random) 
permutons corresponding to weak limits of probability
measures on interesting classes of permutations
(see for example \cite{Glebov, Hoppen, KKRW, PS}).

We shall also use equations (\ref{eq.musigma}) and (\ref{eq.muhatsigma}) 
to define $\mu_{\sigma}$ and $\hat{\mu}_{\sigma}$ for 
bounded affine permutations $\sigma$ in ${\E}_N$.  
(We only use the point masses $(i,\sigma(i))$ for $i\in[N]$.)
We will be interested in weak limits of these
measures, but the limits cannot be permutons because they are not restricted to the unit square.  Rather, $\hat{\mu}_{\sigma}$
and the limits will be measures on the parallelogram 
\[  \pargram   \;=\; \{(x,y)\in\R^2 \,:\, 0\leq x\leq 1, \, |y-x|\leq 1 \}.  
\]
For $\sigma\in\E_N$, we have $\hat{\mu}_{\sigma}\in \hbox{PM}(\pargram)$.

\begin{rem}
Rather than having marginal distributions 
that are both uniform, our weak limits $\mu$
of scaled empirical measures of bounded
affine permutations will have the property that 
for every Borel subset $B$ of $[0,1]$, 
the values of $\mu(B\times \mathbb{R})$ and
$\mu\left([0,1]\times 
\cup_{j\in \mathbb{Z}}(B+j)\right)$ both equal 
the Lebesgue measure of $B$.  
This is because for every affine permutation
$\sigma$ of size $N$, the function on $[N]$ 
defined by $i\mapsto 1+(\sigma(i)\mod N)$ is an 
ordinary permutation.
\end{rem}

Our situation is more complicated than this, because we need to think
in terms of \textit{random measures}.  In particular, we are 
interested in what $\hat{\mu}_{\sigma}$ typically looks like 
for a randomly chosen $\sigma$ in $\E_N(\deck)$.  We formalize this 
by considering probability measures on the set of probability 
measures; that is, our random measures will be members of PM(PM($\pargram$)).  
As $\delta_x$ is in $\hbox{PM}(\pargram)$ when 
$x\in \pargram$, so we have that $\delta_x$ is in 
$\hbox{PM}(\hbox{PM}(\pargram))$ when
$x\in \hbox{PM}(\pargram)$.   For example, 
the measure
$\nu=0.5\delta_{\hat{\mu}_{\oplus 132}}+
0.5\delta_{\hat{\mu}_{\oplus 321}}$
is the random measure that is equally likely to produce the
scaled empirical measure of either $\oplus 132$ or $\oplus 321$.
If $\mathcal{A}$ is a (nonempty finite) set of bounded affine permutations, we
shall write $\hat{\mu}[\mathcal{A}]$ to denote the scaled 
empirical measure $\hat{\mu}_{\sigma}$ where $\sigma$ is chosen 
uniformly at random from $\mathcal{A}$.  That is,
\[
     \hat{\mu}[\mathcal{A}]  \; :=
     \frac{1}{|\mathcal{A}|} \;\sum_{\sigma\in\mathcal{A}} \delta_{\hat{\mu}_{\sigma}} \,.
\]
Observe that $\hat{\mu}[\mathcal{A}]$ is in $\hbox{PM}(\hbox{PM}(\pargram))$, whereas 
$|\mathcal{A}|^{-1} \sum_{\sigma\in\mathcal{A}} \hat{\mu}_{\sigma}$ 
is in PM($\pargram$).

We shall use Wasserstein distance to show weak convergence.  Wasserstein distance is 
a metric on probability measures (on a given metric space) that corresponds to the topology of weak convergence 
(provided that the underlying metric 
space is bounded); see for example  Theorem 5.6 of \cite{Chen}.
It is defined as follows.  Let $(\chi,\rho)$ be a metric space. 
(In this paper, we shall need the example that $\chi$ is 
the parallelogram $\pargram$ and $\rho$ is Euclidean distance; and we shall also need the example that 
$\chi$ is PM($\pargram$), with $\rho$ being the 
Wasserstein distance on PM($\pargram$).)  
Let $\nu_1$ and $\nu_2$ be two probability
measures on $\chi$.  Let Joint$(\nu_1,\nu_2)$ be the set of all probability measures ${\cal J}$
on $\chi\times \chi$ whose marginal distributions are $\nu_1$ and $\nu_2$, i.e.
\begin{equation}
  \label{eq.marginal}
 \nu_1(B)  \,=\,  \int_{B\times \chi} {\cal J}(dx,dy)     \hspace{5mm}\hbox{and}\hspace{5mm}
      \nu_2(B)  \,=\,  \int_{\chi \times B} {\cal J}(dx,dy) 
      \hspace{5mm}\hbox{for every Borel set $B\subset \chi$}\,.
\end{equation}
Then the Wasserstein distance between $\nu_1$ and $\nu_2$ is defined to be
\begin{equation} 
   \label{eq.wassinf}
      \wass(\nu_1,\nu_2)    \;=\;    \inf\left\{   \int_{\chi}\int_{\chi} \rho(x,y)\,{\cal J}(dx,dy) \;:\;{\cal J}\in 
       \hbox{Joint}(\nu_1,\nu_2)\right\}  \,.
\end{equation}
That is, $\wass(\nu_1,\nu_2)$ is the infimum of $E(\rho(X_1,X_2))$ over all jointly distributed
pairs of random variables
$(X_1,X_2)$ on $\chi\times\chi$
where $X_1$ and $X_2$ have distributions $\nu_1$ and $\nu_2$
respectively.  
It is known that this infimum is always attained by some joint 
distribution ${\cal J}$  (e.g.\ Lemma 5.2 of \cite{Chen}).
We shall also use the following convention:  if $Y$ and $Z$
are two $\chi$-valued random variables with respective probability 
distributions $\nu_Y$ and $\nu_Z$, then we may write 
$\wass(Y,Z)$ to denote $\wass(\nu_Y,\nu_Z)$.

We shall use the following abbreviating notation.
We shall write PM$_{1}$ for PM($\pargram$), and 
PM$_2$ for PM(PM($\pargram$)).  Correspondingly,
we shall write Wass$_1$ to denote the 
Wasserstein distance on PM$_1$ (determined by the Euclidean metric on $\pargram$), and Wass$_2$ for the Wasserstein distance on PM$_2$
(determined by the $\wass_1$ metric on PM($\pargram$)).

\begin{rem}
   \label{rem.wass}
In general, defining weak convergence in 
PM($X$) requires specifying a topology
on $X$, or equivalently specifying which functions on $X$ are
continuous.  When $X$ is PM($Y$) for some set $Y$, we need to 
specify the topology of convergence of measures on $Y$.  
Weak convergence, corresponding to Wasserstein metric on $X=\hbox{PM}(Y)$, is a standard choice, and this is our choice. 
But there are other possibilities, such as Total Variation.
The choice of topology on PM($Y$) is a separate decision 
from the choice of topology on $Y$.
\end{rem}

\begin{defi} 
    \label{def.uniform}
Let $\textup{Unif}({\cal H})$ 
denote the uniform distribution on a set ${\cal H}$, and
let $P_{\textup{Unif}({\cal H})}$ denote the 
corresponding probability measure.
The set ${\cal H}$ will always be bounded, and it will be 
of one of two kinds:  a discrete set (i.e.\ a finite set), or a continuous set of dimension $r$ (i.e.\ a
Borel subset of an $r$-dimensional affine subset of 
$\R^d$ ($0<r\leq d)$
that has non-zero $r$-dimensional Lebesgue measure).  
If ${\cal H}$ is continuous, then the ``uniform distribution'' on ${\cal H}$ 
refers to the normalized restriction of $r$-dimensional 
Lebesgue measure to ${\cal H}$.
\end{defi}

Now we shall define the random measure $\lambda^{Q_0}$,
which, as we shall see,
is the weak limit of $\hat{\mu}[\E_N(\deck)]$ as $N\rightarrow\infty$.

\begin{defi} 
   \label{def.lammix}
(a) Given $z\in[-1,1]$, let $\lambda_z \in \textup{PM}_1$ be the probability 
measure on $\pargram$ that is uniformly distributed on the line
segment from $(0,z)$ to $(1,1+z)$.
(Observe that the union of all such line segments is $\pargram$.)
\\
(b) Given $\collec{z}=(z_1,\ldots,z_k)\in [-1,1]^k$, define $\lambda\langle z \rangle \in \textup{PM}_1$ by
\begin{equation}
    \label{eq.lamcollz}
    \lambda\collec{z} \;:=\; \frac{1}{k}\sum_{i=1}^k \lambda_{z_i}\,,
\end{equation}
the probability measure uniformly distributed on the $k$ line 
segments
in $\pargram$ of slope 1 with $y$-intercepts $z_1,\ldots,z_k$.
\\
(c) Given a set $Q\subset [-1,1]^k$  (discrete or continuous),
let $\lambda^{Q} \in \textup{PM}_2$ be the random measure given by
$\lambda\collec{\beta}$ where $(\beta_1,\ldots,\beta_k)$ is 
uniformly distributed on $Q$.  That is, in particular,
\begin{equation}  
   \nonumber    
    \lambda^{Q}  \;=\;  \frac{1}{|Q|} \sum_{\collec{z}\in Q}
    \delta_{\lambda\collec{z}}
    \hspace{5mm}\hbox{if $Q$ is discrete.}
\end{equation}
(d) Define $Q_0:=\left\{\collec{x}\in [-1,1]^k:\sum_{i=1}^kx_i=0\right\}$.
\end{defi}

The main result of this section, 
Theorem \ref{thm.wassmain} below, states that 
$\hat{\mu}[\E_N(\deck)]$
converges weakly to $\lambda^{Q_0}$. Intuitively, 
this result means that 
the plot of 
a random element of $\E_n(\deck)$ 
(scaled down to the unit square) 
looks like 
the support of
$\lambda^{Q_0}$, 
which consists of $k$ lines of slope $1$ with $y$-intercepts 
chosen randomly
from $[-1,1]$ subject to the constraint that their sum is $0$.
We shall prove this using Wasserstein distances. 

\begin{thm}  
   \label{thm.wassmain}
Wass$_2(\hat{\mu}[\E_N(\deck)],\,\lambda^{Q_0})$ converges
to 0 as $N\rightarrow\infty$.   That is, the sequence of random measures $\hat{\mu}[\E_N(\deck)]$ converges weakly to 
$\lambda^{Q_0}$, with respect to the topology of weak convergence 
on PM$(\pargram)$.
\end{thm}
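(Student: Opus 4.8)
The plan is to prove Theorem \ref{thm.wassmain} by a double limit, letting $N\to\infty$ first for a fixed small $\alpha$ and then letting $\alpha\downarrow 0$, and to route the comparison through the restricted domain $\Dom(N,\alpha,A,B)$ with $A=\alpha N$ and $B=2\alpha N$. On that domain $\Psi$ is exactly $k!$-to-$1$ into $\E_N(\deck)$ (Lemma \ref{lem.V**strips} and Corollary \ref{cor.Dombound}), so if $\vec V$ is chosen uniformly from $\Dom(N,\alpha,\alpha N,2\alpha N)$ then $\Psi(\vec V)$ is uniform on $\Psi(\Dom)$ and $\hat\mu[\Psi(\Dom)]$ is the law of $\hat\mu_{\Psi(\vec V)}$. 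I will bound $\wass_2(\hat\mu[\E_N(\deck)],\lambda^{Q_0})$ by the triangle inequality through the intermediate measures $\hat\mu[\Psi(\Dom(N,\alpha,\alpha N,2\alpha N))]$, the law of $\lambda\collec{z(\vec V)}$ where $z(\vec v):=(\Delta_1/n_1,\ldots,\Delta_k/n_k)$, and $\lambda^{Q^{(\alpha)}}$ for a compact set $Q^{(\alpha)}\subseteq Q_0$ with $Q^{(\alpha)}\uparrow Q_0$ as $\alpha\downarrow 0$. Throughout I will use that $\pargram$ is bounded, so the $\wass_1$-diameter of $\mathrm{PM}_1$ and the $\wass_2$-diameter of $\mathrm{PM}_2$ are finite, and the elementary mixture inequalities for Wasserstein distance proved in Section \ref{sec.Wass}.

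\emph{Step 1: discard the atypical permutations.} Running Propositions \ref{prop.4321upper} and \ref{prop.4321lower} with $A=\alpha N$ and $B=2\alpha N$ shows that $|\Psi(\Dom(N,\alpha,\alpha N,2\alpha N))|/|\E_N(\deck)|=\tfrac1{k!}|\Dom(N,\alpha,\alpha N,2\alpha N)|/|\E_N(\deck)|$ has liminf, as $N\to\infty$, at least $1-\eta(\alpha)$ with $\eta(\alpha)\to 0$ as $\alpha\downarrow 0$. Writing $\hat\mu[\E_N(\deck)]$ as a mixture of $\hat\mu[\Psi(\Dom)]$ and the scaled empirical measure of the remaining permutations and applying the mixture bound, $\wass_2(\hat\mu[\E_N(\deck)],\hat\mu[\Psi(\Dom(N,\alpha,\alpha N,2\alpha N))])$ is at most $\mathrm{diam}(\pargram)\,(1-|\Psi(\Dom)|/|\E_N(\deck)|)$, whose limsup over $N$ is $\le \mathrm{diam}(\pargram)\,\eta(\alpha)$.

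\emph{Step 2: each typical plot is close to a union of $k$ segments.} Fix $\vec v=(\collec n,\collec G,\collec H,\collec\Delta)\in\Dom(N,\alpha,\alpha N,2\alpha N)$, put $\sigma=\Psi(\vec v)$ and $z_i=\Delta_i/n_i\in(-1,1)$. By Equations \eqref{eq.stripbound1}--\eqref{eq.stripbound2} of Lemma \ref{lem.V**} (with $A=\alpha N$), for each $i$ and $j\in[n_i]$ the scaled point $(g_{i,j}/N,\sigma(g_{i,j})/N)=(g_{i,j}/N,h_{i,j+\Delta_i}/N)$ lies within Euclidean distance $O(\alpha)+O(1/N)$ of the point $(j/n_i,\,j/n_i+z_i)$, which sits at parameter $j/n_i$ on the segment from $(0,z_i)$ to $(1,1+z_i)$; and as $j$ runs over $[n_i]$ the values $j/n_i$ form the equally-spaced discretization of $(0,1]$. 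Since $\sigma$ contributes mass $n_i/N$ to the $i$-th strip with $|n_i/N-1/k|\le\alpha$, combining the $j\mapsto j$ matching coupling inside each strip, the $O(1/n_i)$ gap between the discretized uniform measure on the segment and $\lambda_{z_i}$, the mixture bound for reweighting the strip masses from $n_i/N$ to $1/k$, and subadditivity of $\wass_1$ over mixtures, I get $\wass_1(\hat\mu_\sigma,\lambda\collec{z(\vec v)})\le C(k)\alpha+O(1/N)$ uniformly in $\vec v\in\Dom(N,\alpha,\alpha N,2\alpha N)$. Reading $\vec V\mapsto(\hat\mu_{\Psi(\vec V)},\lambda\collec{z(\vec V)})$ as a coupling of laws gives $\wass_2(\hat\mu[\Psi(\Dom)],\ \mathrm{law}\,\lambda\collec{z(\vec V)})\le C(k)\alpha+O(1/N)$. (The order-$\alpha$ discrepancy between $\sum_i z_i$ and $0$, coming from $n_i\neq N/k$, is absorbed here since $z\mapsto\lambda\collec{z}$ is Lipschitz, using $\wass_1(\lambda_z,\lambda_{z'})\le|z-z'|$.)

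\emph{Step 3: the intercept vector becomes uniform on $Q_0$, and conclusion.} Since $\lambda\collec{z}$ depends on $\vec v$ only through $z(\vec v)$, and $z\mapsto\lambda\collec{z}$ is Lipschitz from $[-1,1]^k$ to $(\mathrm{PM}_1,\wass_1)$, it suffices (by the continuous mapping theorem plus $\wass_2(\lambda^{Q^{(\alpha)}},\lambda^{Q_0})\to 0$ as $\alpha\downarrow0$) to show that the law of $z(\vec V)$ converges weakly, as $N\to\infty$, to $\mathrm{Unif}(Q^{(\alpha)})$ for a set $Q^{(\alpha)}\subseteq Q_0$ increasing to $Q_0$. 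For $\vec V$ uniform on $\Dom(N,\alpha,\alpha N,2\alpha N)$, the pair $(\collec n,\collec\Delta)$ has law proportional to $|\mathcal V^{**A}(\collec n)|^2\,\mathbf 1[\collec\Delta\in\mathcal W(\alpha,\alpha N,2\alpha N,\collec n)]$; by Lemma \ref{lem.sequnifB}, $|\mathcal V^{**A}(\collec n)|=\binom{N}{\collec n}(1+o(1))$ uniformly over $\collec n\in\mathfrak N(N,\alpha)$, so up to $1+o(1)$ the $\collec n$-marginal is proportional to $\binom{N}{\collec n}^2|\mathcal W(\dots,\collec n)|$, and a local central limit theorem for the multinomial distribution confines $\collec n/N$ to an $O(N^{-1/2})$ neighborhood of $(1/k,\dots,1/k)$, over which $|\mathcal W(\dots,\collec n)|$ changes by a factor $1+o(1)$. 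Conditionally on such $\collec n$, $\collec\Delta$ is uniform on the lattice points of $\{\sum\Delta_i=0,\ |\Delta_i|< n_i-2\alpha N\}$ after deleting the $O(N^{k-2})$ points (out of $\Theta(N^{k-1})$, by the count in the proof of Lemma \ref{lem.W**}) that violate \eqref{eq.Deltadiff}; hence $z(\vec V)=(\Delta_i/n_i)_i$ converges weakly, with respect to $(k{-}1)$-dimensional Lebesgue measure, to the uniform law on $Q^{(\alpha)}=\{x:\ \textstyle\sum_i x_i=0,\ |x_i|\le 1-2k\alpha\ \forall i\}$. Finally, given $\varepsilon>0$, choose $\alpha$ so small that $\mathrm{diam}(\pargram)\eta(\alpha)$, $C(k)\alpha$, and $\wass_2(\lambda^{Q^{(\alpha)}},\lambda^{Q_0})$ are each $<\varepsilon$; then for all large $N$ the residual $o(1)$ terms and the Step 3 weak-convergence error are also $<\varepsilon$, so the triangle inequality yields $\wass_2(\hat\mu[\E_N(\deck)],\lambda^{Q_0})<4\varepsilon$, proving the theorem.

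The main obstacle is Step 3: one must show that the (nonuniform) weighting of $\collec n$ by $|\mathcal V^{**A}(\collec n)|^2|\mathcal W(\dots,\collec n)|$ and the conditional uniformity of $\collec\Delta$ on a $\collec n$-dependent lattice polytope together produce, after dividing by $n_i$, genuine convergence to the continuous uniform law on $Q^{(\alpha)}$ — this is where Lemma \ref{lem.sequnifB} and a local limit theorem do the real work, and where the separation constraint \eqref{eq.Deltadiff} must be checked to be asymptotically negligible. Steps 1 and 2 are comparatively routine bookkeeping once the mixture properties of Wasserstein distance from Section \ref{sec.Wass} and the strip estimates of Lemma \ref{lem.V**} are available.
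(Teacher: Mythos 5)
Your Steps 1 and 2 essentially recapitulate the paper's own strategy: Step 1 is Proposition \ref{prop.wassSIm} combined with Lemma \ref{lem.unif}, and Step 2 is Proposition \ref{prop.domwass1} (whose proof also reweights $n_i/N\to 1/k$ via Lemma \ref{lem.wass2}). The genuine divergence is in Step 3. The paper proves, in Proposition \ref{prop.wassnn}, that $\wass_2(\lambda^{\widehat{\mathcal W}(\collec n)},\lambda^{Q_0})\le C(\alpha+(A+B+1)/N)$ \emph{uniformly over every} $\collec n\in\mathfrak N(N,\alpha)$; then Lemma \ref{lem.wass1} lets one mix over $\collec n$ with whatever weights arise, so the marginal distribution of $\collec n$ never needs to be understood. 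You instead invoke a local CLT to argue that the $\collec n$-marginal (weighted by $|\mathcal V^{**A}(\collec n)|^2|\mathcal W(\collec n)|$) concentrates at scale $N^{-1/2}$ near $(1/k,\ldots,1/k)$ and then argue conditional weak convergence to $\mathrm{Unif}(Q^{(\alpha)})$. This route is plausible and would likely work, but it introduces machinery the paper deliberately avoids, and the two substantive steps it relies on are left as assertions: (i) an LCLT for the squared multinomial weights multiplied by the slowly varying factor $|\mathcal W(\collec n)|$ (you must check $|\mathcal W|$ is $1+o(1)$-constant over the $O(\sqrt N)$ window, and that the $|\mathcal V^{**A}|^2/\binom{N}{\collec n}^2$ correction from Lemma \ref{lem.sequnifB} is uniformly $1+o(1)$), and (ii) that the scaled lattice uniform on the $\collec n$-dependent polytope, after deleting the $O(N^{k-2})$ points violating \eqref{eq.Deltadiff}, converges in $\wass_1$ to the continuous uniform on $Q^{(\alpha)}$. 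Point (ii) is exactly the content of the paper's geometric argument (the $\textup{Cube}\collec{z}$ tiling, the homothety bound \eqref{eq.lebscalet}, and Lemma \ref{lem.unif}); by asserting it you are implicitly assuming the hardest part of Proposition \ref{prop.wassnn}. The paper's uniform-over-$\collec n$ approach is cleaner and more quantitative precisely because it never needs a local CLT; your version trades that for probabilistic intuition but would ultimately require filling in essentially the same lattice-to-continuum estimate plus an additional concentration argument for $\collec n$ that the paper shows to be unnecessary.
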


\begin{rem}
   \label{rem.thmwassmain}
With reference to Remark \ref{rem.wass}, the weak convergence of
$\hat{\mu}[\E_N(\deck)]$ to $\lambda^{Q_0}$ does not hold 
with respect to the Total Variation topology on PM$(\pargram)$.  
This is because the set of points in $\pargram$ with rational 
coordinates has probability 1 under every $\hat{\mu}_{\sigma}$
but has probability 0 under every $\lambda_{z}$, and hence the 
total variation distance between $\hat{\mu}_{\sigma}$ and 
$\lambda\collec{z}$  is always 1.
\end{rem}

Here, in brief, are the main parts of the strategy of the proof.
\\
\underline{Step 1}.
First, recall the set
$\Dom\equiv\Dom(N,\alpha,A,B)$ from 
Definition \ref{def.Dom},
and the $k!$-to-one map $\Psi:\Dom\rightarrow\E_N(\deck)$ 
whose image is most of $\E_N(\deck)$.  
We shall show that the random measures 
$\hat{\mu}[\E_N(\deck)]$ and $\hat{\mu}[\Psi(\Dom)]$
are close in Wass$_2$ distance.
\\
\underline{Step 2}.  Fix $\vec{v}=(\collec{n},\collec{\Delta},\collec{G},\collec{H})\in \Dom$ and let $\sigma=\Psi(\vec{v})$ be 
its associated affine permutation.  
We shall show that
$\hat{\mu}_{\sigma}$ is close to 
$\frac{1}{k}\sum_{i=1}^k\lambda_{(\Delta_i/n_i)}$ 
in Wass$_1$ distance.
\\
\underline{Step 3}.  
Let $\collec{n}\in \mathfrak{N}(N,\alpha)$.
Let $\mathcal{W}\equiv\mathcal{W}(\alpha,A,B,\collec{n})$ be 
as defined in the statement of Lemma \ref{lem.W**},  namely 
the set of $\collec{\Delta}$ such that $(\collec{n},\collec{\Delta},
\collec{G},\collec{H})$ is in $\Dom$ for some $\collec{G}$
and $\collec{H}$.  Then we 
show that the (continuous) set $Q_0$ is well approximated by 
the discrete set 
\begin{equation*}
 \widehat{\mathcal{W}} \;:=  \;   
 \left\{\left(\frac{\Delta_1}{n_1},\ldots,\frac{\Delta_k}{n_k}\right):
    \collec{\Delta}\in\mathcal{W}\right\},
\end{equation*}
which is a scaled down version of $\mathcal{W}$.
We shall show that for every $\collec{n}\in\mathfrak{N}(N,\alpha)$,
$\lambda^{\widehat{\mathcal{W}}}$  is close to $\lambda^{Q_0}$ 
(and close to $\hat{\mu}[\Psi(\Dom)]$, by Step 2) 
in Wass$_2$ distance.

\subsection{Mixtures and Wasserstein Distance}
   \label{sec.Wass}

Throughout this subsection, we assume that 
$\chi$ is a set with metric $\rho$.  We define the diameter 
of $\chi$ to be
\[ \textup{diam}(\chi)  \;:=  \;  \sup\{  \rho(a,b) : a,b\in \chi\}.
\]
In the rest of this paper, we assume the diameter of $\chi$ is finite.
We shall use the Borel sigma-algebra of $\chi$, with open sets 
determined by the metric $\rho$.

Given $m\in \mathbb{N}$,
let $\nu_1,\ldots,\nu_m$ be probability measures on $\chi$,
and let $a_1,\ldots,a_m$ be real numbers in $[0,1]$ whose sum is 1.
Let $\nu=\sum_{i=1}^m a_i\nu_i$.  Then $\nu$ is also a probability 
measure.  In other words, every convex combination of probability 
measures (on a given measurable space) is a probability measure.

In statistical terminology, the measure $\nu$ in the preceding
paragraph is also called a ``mixture'' of $\nu_1,\ldots,\nu_m$.
It may be interpreted with the following construction.
\begin{verse}
   \underline{Randomized Algorithm MIX}
   \\
   (a)  Let $\vec{X}=(X_1,\ldots,X_m)$ be a random vector
   (taking values in $\chi^d$)  such that
   the component $X_i$ has distribution $\nu_i$ for each $i\in[m]$.  
   (We do not require that the components be independent.)
   \\
   (b)  Let $J$ be a random variable, independent of $\vec{X}$, 
   such that $\Pr(J=i)=a_i$ for each $i\in[m]$.
   \\
   (c)  Let $Y=X_J$.  (That is, we assign $Y$ to be $X_1$ with probability $a_1$, to be $X_2$ with probability $a_2$, and 
   so on.)  Then the distribution of $Y$ is $\nu$.
\end{verse}
To prove the conclusion of (c), let $D$ be a measurable subset of $\chi$.  Then
\begin{eqnarray*}
\Pr(X_J\in D)  \;=\;  \sum_{i=1}^m\Pr(J=i \hbox{ and }X_i\in D)
    & = & \sum_{i=1}^m \Pr(J=i)\Pr(X_i\in D) \\
    & = &   
    \sum_{i=1}^m a_i\nu_i(D) \;=\;  \nu(D) \,.
\end{eqnarray*}
The above construction leads directly to the following lemmas.
   
\begin{lemma}
   \label{lem.wass1}
Let $\nu_1,\ldots,\nu_m,\omega_1,\ldots,
\omega_m\in\textup{PM}(\chi)$.  
Let $a_1,\ldots,a_m$ be nonnegative real numbers that add up to 1.
Then 
\[  \wass\left(\sum_{i=1}^ma_i\nu_i,
   \sum_{i=1}^ma_i\omega_i\right)  \;\leq \; 
   \sum_{i=1}^ma_i\wass(\nu_i,\omega_i)\,.
\]
\end{lemma}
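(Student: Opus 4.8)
The plan is to build an explicit coupling of the two mixtures out of optimal couplings of the individual pairs, and to read off the bound from its transport cost. Concretely, for each $i\in[m]$ invoke the fact recalled just after equation (\ref{eq.wassinf}) (Lemma 5.2 of \cite{Chen}) that the infimum defining $\wass(\nu_i,\omega_i)$ is attained, and choose $\mathcal{J}_i\in\textup{Joint}(\nu_i,\omega_i)$ with $\int_\chi\int_\chi \rho(x,y)\,\mathcal{J}_i(dx,dy)=\wass(\nu_i,\omega_i)$. (When $a_i=0$ the choice of $\mathcal{J}_i$ does not matter.)

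Next I would form the mixture $\mathcal{J}:=\sum_{i=1}^m a_i\mathcal{J}_i$, a probability measure on $\chi\times\chi$. By equation (\ref{eq.marginal}) and linearity of the integral in the measure, the first marginal of $\mathcal{J}$ is $\sum_i a_i\nu_i$ and the second is $\sum_i a_i\omega_i$, so $\mathcal{J}\in\textup{Joint}\big(\sum_i a_i\nu_i,\sum_i a_i\omega_i\big)$; this is exactly the coupling produced by Randomized Algorithm MIX applied to pairs $(X_i,Y_i)\sim\mathcal{J}_i$ together with an independent selector $J$ with $\Pr(J=i)=a_i$, giving the pair $Y=(X_J,Y_J)$. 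Using this particular $\mathcal{J}$ as a (generally non-optimal) competitor in the infimum of equation (\ref{eq.wassinf}), and again using linearity of the integral in the measure, yields
\begin{align*}
\wass\!\left(\sum_{i=1}^m a_i\nu_i,\; \sum_{i=1}^m a_i\omega_i\right)
&\;\le\; \int_\chi\int_\chi \rho(x,y)\,\mathcal{J}(dx,dy) \\
&\;=\; \sum_{i=1}^m a_i\int_\chi\int_\chi \rho(x,y)\,\mathcal{J}_i(dx,dy)
\;=\; \sum_{i=1}^m a_i\,\wass(\nu_i,\omega_i),
\end{align*}
which is the asserted inequality.

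There is essentially no hard step here: the only points needing a word of care are that the infimum in the definition of Wasserstein distance is attained (quoted from \cite{Chen}), and that the marginals of a finite convex combination of couplings are the corresponding convex combinations of the marginals (immediate from equation (\ref{eq.marginal})). Since $\textup{diam}(\chi)<\infty$ is assumed throughout this subsection, $\rho$ is bounded and all the integrals above are finite, so no integrability subtlety arises; one could also phrase the whole argument purely in the language of Algorithm MIX, computing $E[\rho(X_J,Y_J)]=\sum_i a_i\,E[\rho(X_i,Y_i)]$ by conditioning on $J$.
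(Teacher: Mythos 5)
Your proof is correct and follows essentially the same route as the paper: both select optimal couplings $\mathcal{J}_i$ for each pair $(\nu_i,\omega_i)$, form the mixture coupling $\sum_i a_i\mathcal{J}_i$ (the paper phrases this via the random vector $(X_J,Z_J)$ from Algorithm MIX, which you also note), and bound the Wasserstein distance by the transport cost of this competitor. The only cosmetic difference is that you write the argument in the language of measures and integrals while the paper works with random variables and expectations; the underlying coupling and estimate are identical.
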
 

\begin{proof}
Let $\nu=\sum_{i=1}^ma_i\nu_i$ and $\omega=\sum_{i=1}^ma_i\omega_i$.
For each $i\in [m]$, let $(X_i,Z_i)$ be a $\chi{\times}\chi$-valued 
random vector such that $E(\rho(X_i,Z_i))=\wass(\nu_i,\omega_i)$ 
(we know that such a random vector exists because the infimum in 
Equation (\ref{eq.wassinf}) is always attained).
Also let
the $m$ random vectors $(X_i,Z_i)$ ($i=1,\ldots,m)$ be independent.
Lastly, 
let $J$ be a random variable, independent of the $(X_i,Z_i)$'s,
such that $\Pr(J=i)=a_i$ for each $i$.
Then the $\chi{\times}\chi$-valued random vector $(X_J,Z_J)$ has
marginal distributions $\nu$ and $\omega$.
Therefore 
\begin{align*}
   \textup{Wass}(\nu,\omega) & \leq E\left(\rho(X_J,Z_J)\right) \\
   & = \sum_{i=1}^m \Pr(J=i)E(\rho(X_i,Z_i)) \\
   & = \sum_{i=1}^m a_i \wass(\nu_i,\omega_i)\,. \qedhere
\end{align*}
\end{proof}

\begin{lemma}
   \label{lem.wass2}
Let $\nu_1,\ldots,\nu_m\in\textup{PM}(\chi)$.  
Let $a_1,\ldots,a_m,b_1,\ldots,b_m$ be nonnegative real numbers 
such that $\sum_{i=1}^ma_i=1=\sum_{i=1}^mb_i$.
Then 
\[  \wass\left(\sum_{i=1}^ma_i\nu_i,
   \sum_{i=1}^mb_i\nu_i\right)  \;\leq \; 
  \hbox{diam}(\chi) \sum_{i=1}^m|a_i-b_i| \,.
\]
\end{lemma}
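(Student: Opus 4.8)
The plan is to prove the stronger inequality $\wass\!\left(\sum_i a_i\nu_i,\sum_i b_i\nu_i\right)\le\tfrac12\,\textup{diam}(\chi)\sum_i|a_i-b_i|$ by exhibiting an explicit coupling, in exactly the spirit of the Randomized Algorithm MIX used for Lemma~\ref{lem.wass1}. First I would couple the \emph{index} choices as tightly as possible: let $(J,K)$ be an $[m]^2$-valued random vector whose marginal laws are $(a_i)_i$ and $(b_i)_i$, and which puts mass $\min(a_i,b_i)$ on each diagonal pair $(i,i)$; the leftover mass $\sum_i(a_i-b_i)^+$ on the $J$-side and $\sum_i(b_i-a_i)^+$ on the $K$-side is then distributed over off-diagonal pairs $(i,j)$ in any fashion consistent with those marginals (such a distribution exists because the two leftover totals are equal, since $\sum_i a_i=\sum_i b_i$). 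This is the maximal coupling, and a short computation using $\sum_i(a_i-b_i)^+=\tfrac12\sum_i|a_i-b_i|$ gives $\Pr(J\neq K)=\tfrac12\sum_i|a_i-b_i|$.

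Next I would lift $(J,K)$ to a coupling of the measures themselves. Conditionally on $(J,K)=(i,i)$, sample one point $X\sim\nu_i$ and output $(X,X)$; conditionally on $(J,K)=(i,j)$ with $i\neq j$, sample $X\sim\nu_i$ and $Z\sim\nu_j$ independently and output $(X,Z)$. The key verification is that the resulting pair $(X,Z)$ has the correct marginals: conditionally on $J=i$ the first coordinate is distributed as $\nu_i$ regardless of the value of $K$, so the first coordinate has law $\sum_i a_i\nu_i$, and symmetrically the second has law $\sum_i b_i\nu_i$. Since $\rho(X,Z)=0$ whenever $J=K$, the definition of Wasserstein distance then yields
\[
  \wass\!\left(\sum_i a_i\nu_i,\ \sum_i b_i\nu_i\right)
  \;\le\; E\big(\rho(X,Z)\big)
  \;=\; E\big(\rho(X,Z)\,\mathbf{1}\{J\neq K\}\big)
  \;\le\; \textup{diam}(\chi)\,\Pr(J\neq K)
  \;=\; \tfrac12\,\textup{diam}(\chi)\sum_i|a_i-b_i|,
\]
and the stated bound follows a fortiori.

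I do not anticipate a genuine obstacle: the whole argument is routine, with the only mildly delicate points being the construction of the off-diagonal part of the index coupling and the marginal check for $(X,Z)$. If one prefers to avoid invoking maximal couplings explicitly, an essentially equivalent route is available through Lemma~\ref{lem.wass1}: set $s=\sum_i(a_i-b_i)^+=\tfrac12\sum_i|a_i-b_i|$; if $s<1$, write $\sum_i a_i\nu_i=(1-s)P+s\mu_a$ and $\sum_i b_i\nu_i=(1-s)P+s\mu_b$ with the common part $P$ proportional to $\sum_i\min(a_i,b_i)\nu_i$, so that Lemma~\ref{lem.wass1} applied with weights $(1-s,s)$ gives $\wass\le s\,\wass(\mu_a,\mu_b)\le s\,\textup{diam}(\chi)$; and the remaining case $s\ge 1$ is immediate since then $\sum_i|a_i-b_i|\ge 2$ while the Wasserstein distance never exceeds $\textup{diam}(\chi)$. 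I would include whichever of the two presentations is shorter given the surrounding material.
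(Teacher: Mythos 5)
Your proposal is correct and follows essentially the same strategy as the paper: construct a coupling $(J,K)$ of the two index distributions, then lift it to a coupling of the two mixture measures by assigning both coordinates the \emph{same} sample of $\nu_i$ when $J=K=i$, so that $\rho$ vanishes on the diagonal and the Wasserstein distance is controlled by $\textup{diam}(\chi)\Pr(J\neq K)$. The one substantive difference is that you carry out the maximal coupling explicitly and therefore obtain the sharper constant $\tfrac12\sum_i|a_i-b_i|$ for $\Pr(J\neq K)$, whereas the paper's proof invokes \cite{LPW} and asserts $\Pr(J\neq K)=\sum_i|a_i-b_i|$; the LPW propositions cited actually give the maximal-coupling value $\tfrac12\sum_i|a_i-b_i|$ (and the paper's stated equality is impossible when $\sum_i|a_i-b_i|>1$), so your version both corrects a small inaccuracy and tightens the bound by a factor of two. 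Your alternative route through Lemma~\ref{lem.wass2}'s companion (Lemma~\ref{lem.wass1}), writing each mixture as $(1-s)P+s\mu_{\bullet}$ with a common part $P$, is a genuinely different and slightly slicker argument that avoids constructing any joint law of $(J,K)$ by hand; it gives the same improved constant. Either version is fine; the second is probably shorter given that Lemma~\ref{lem.wass1} has already been proved.
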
 

\begin{proof}
We first assert that there exists a random vector $(J,K)$ such that 
$\Pr(J=i)=a_i$ and $\Pr(K=i)=b_i$ for each $i$ and $\Pr(J\neq K)=
\sum_{i=1}^m|a_i-b_i|$.  This true by Propositions 4.2 and 4.7 and
Remark 4.8 of \cite{LPW}.

Next, let $(X_1,\ldots,X_m)$ be a random vector, independent of
$(J,K)$, such that
the component $X_i$ has distribution $\nu_i$ for each $i$.  
Then we have
\begin{align*}
  \wass\left(\sum_{i=1}^ma_i\nu_i,
   \sum_{i=1}^mb_i\nu_i\right) &\leq  E(\rho(X_J,X_K)) \\
     &\leq  
     \textup{diam}(\chi)\;\Pr\left(\rho(X_J,X_K)\neq 0\right)  \\
     & \leq \textup{diam}(\chi) \,\Pr(J\neq K) \,. \qedhere
\end{align*}
\end{proof}

The following lemma shows that the Wasserstein distance between
two uniform distributions is small when their support sets have 
large overlap.  
Recall the terminology of Definition \ref{def.uniform}.  

\begin{lemma}
   \label{lem.unif}    
Let $A\subset B\subset \chi$. 
Assume that $A$ and $B$ are either both nonempty finite sets,
or both continuous sets in the sense of Definition
\ref{def.uniform}.  Then
we have
\[    \wass(\textup{Unif}(A),\textup{Unif}(B))  \;\leq \;
     \textup{diam}(B)\,P_{\textup{Unif}(B)}(B\setminus A) \,.
\]
\end{lemma}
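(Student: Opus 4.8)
The plan is to exhibit a single coupling of $\textup{Unif}(A)$ and $\textup{Unif}(B)$ that keeps the two samples equal whenever the $\textup{Unif}(B)$-sample lands in $A$, and pays at most $\textup{diam}(B)$ otherwise. Write $p := P_{\textup{Unif}(B)}(A)$, which is $|A|/|B|$ in the finite case and the ratio of the relevant $r$-dimensional Lebesgue measures in the continuous case, so that $P_{\textup{Unif}(B)}(B\setminus A) = 1-p$. If $p = 1$ --- that is, $A = B$ in the finite case, or $B\setminus A$ has zero $r$-dimensional measure in the continuous case --- then $\textup{Unif}(A) = \textup{Unif}(B)$ and both sides of the desired inequality are $0$, so we may assume $p < 1$. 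In that case $B\setminus A$ is a legitimate set on which $\textup{Unif}(B\setminus A)$ is defined, and there is the mixture identity
\[ \textup{Unif}(B) \;=\; p\,\textup{Unif}(A) \;+\; (1-p)\,\textup{Unif}(B\setminus A). \]

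The key step is to apply Lemma \ref{lem.wass1} with $m=2$ and weights $a_1=p$, $a_2=1-p$, comparing the mixture above with the trivial representation $\textup{Unif}(A) = p\,\textup{Unif}(A) + (1-p)\,\textup{Unif}(A)$. This yields
\[ \wass\!\left(\textup{Unif}(A),\textup{Unif}(B)\right) \;\leq\; p\,\wass\!\left(\textup{Unif}(A),\textup{Unif}(A)\right) \;+\; (1-p)\,\wass\!\left(\textup{Unif}(A),\textup{Unif}(B\setminus A)\right). \]
The first term vanishes. For the second, both $\textup{Unif}(A)$ and $\textup{Unif}(B\setminus A)$ are supported inside $B$, so the product (independent) coupling of the two gives $\wass(\textup{Unif}(A),\textup{Unif}(B\setminus A)) \leq \textup{diam}(B)$. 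Combining, $\wass(\textup{Unif}(A),\textup{Unif}(B)) \leq (1-p)\,\textup{diam}(B) = \textup{diam}(B)\,P_{\textup{Unif}(B)}(B\setminus A)$, which is the claim.

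There is no real obstacle here; the only point that needs care is to split off the degenerate case where $B\setminus A$ is empty or Lebesgue-null before one writes down $\textup{Unif}(B\setminus A)$. As an alternative that sidesteps Lemma \ref{lem.wass1}, one can argue by a direct coupling: sample $Y\sim\textup{Unif}(B)$, set $X := Y$ if $Y\in A$, and otherwise let $X$ be an independent draw from $\textup{Unif}(A)$; a one-line computation with measurable sets $D$ shows the marginal law of $X$ is $\textup{Unif}(A)$, while $E[\rho(X,Y)] = E[\rho(X,Y)\,\mathbf{1}_{\{Y\notin A\}}] \leq \textup{diam}(B)\,P_{\textup{Unif}(B)}(B\setminus A)$. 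Either route is routine.
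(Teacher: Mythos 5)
Your proof is correct, and in fact you've given two valid routes, one of which coincides with the paper's. Your primary argument decomposes $\textup{Unif}(B)$ as the mixture $p\,\textup{Unif}(A)+(1-p)\,\textup{Unif}(B\setminus A)$ and invokes Lemma \ref{lem.wass1} against the trivial decomposition $\textup{Unif}(A)=p\,\textup{Unif}(A)+(1-p)\,\textup{Unif}(A)$, which is clean and handles all the cases (the $p=0$ case, where $A$ has $P_{\textup{Unif}(B)}$-measure zero, goes through harmlessly since the first summand then carries weight zero). The paper instead writes down a bare-hands coupling: independently sample $V_A\sim\textup{Unif}(A)$ and $V_B\sim\textup{Unif}(B)$, set $X:=V_B$, and set $Y:=V_B$ on the event $\{V_B\in A\}$ and $Y:=V_A$ otherwise, then checks directly that $Y\sim\textup{Unif}(A)$ and that $\rho(X,Y)\le\textup{diam}(\chi)\,\mathbb{I}_{\{V_B\notin A\}}$. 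That is precisely the ``alternative'' coupling you describe at the end. Unwinding your mixture argument (use the diagonal coupling on the weight-$p$ pair and the independent product coupling on the weight-$(1-p)$ pair) produces exactly the paper's coupling, so the two proofs are really the same object viewed at different levels of abstraction: yours is shorter because Lemma \ref{lem.wass1} absorbs the bookkeeping, while the paper's is self-contained and avoids introducing the conditional measure $\textup{Unif}(B\setminus A)$.
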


\begin{proof}
Let $V_A$ and $V_B$ be independent random variables,
with the Unif$(A)$ and Unif$(B)$ distributions respectively.
Define the random variables $X$ and $Y$ by 
\[
     X\;:=\;V_B  \hspace{5mm}\hbox{and}\hspace{5mm}
         Y \;:=\; \begin{cases}
                V_B   & \hbox{if } V_B\in A \,, \\
                V_A  & \hbox{if } V_B\not\in A \,.
                \end{cases}
\]
It is routine to check that $Y$ has the Unif$(A)$ distribution.
Since $X$ has the Unif$(B)$ distribution, we have 
$\wass(\textup{Unif}(A),\textup{Unif}(B)) \leq E(\rho(Y,X))$.
The result now follows from the fact that 
$\rho(Y,X)\leq \textup{diam}(\chi) \,\mathbb{I}_{(V_B\not\in A)}$ , 
where $\mathbb{I}_{\cal E}$ is the indicator random variable that
equals 1 or 0 according to whether the event 
${\cal E}$ occurs or not.
\end{proof}

We remark that this lemma is not useful when $A$ is 
discrete and $B$ is continuous, nor when $A$ and $B$ are 
continuous sets with different dimensions, since in such 
cases $P_{\textup{Unif}(B)}(B\setminus A)=1$.

\subsection{Proofs of Wasserstein Approximations}
   \label{sec.wassproof}
   
In this section, we fix $k\geq 2$.

Recall the set $\Dom(N,\alpha,A,B)$ from 
Definition \ref{def.Dom}, as well as the function 
$\Psi$ defined on this set by Equations (\ref{eq.hperiodic}) and (\ref{eq.sigma4def}), for suitable values of
$N$, $\alpha$, $A$, and $B$.
We shall first handle Step 2 in the proof strategy outlined at 
the end of Section \ref{sec.weakintro}.

\begin{prop}
   \label{prop.domwass1}
Let $N\in \mathbb{N}$, let $A$ and $B$ be
positive real numbers, and let 
$\alpha$ be a real number in $(0,1/k)$.
Let $\vec{v}=(\collec{n},\collec{G},\collec{H},\collec{\Delta}) 
\in \Dom(N,\alpha,A,B)$, and let $\sigma=\Psi(\vec{v})$.
As in Equation (\ref{eq.lamcollz}), 
let $\lambda{\collec{\Delta/n}}$ be the probability measure on 
$\pargram$ defined by
\[  \lambda{\collec{\Delta/n}} \;:=\;  \frac{1}{k}\sum_{i=1}^k
       \lambda_{\Delta_i/n_i}.
\]
Then 
\begin{equation}
    \label{eq.wass1bd}
    \wass_1\left( \hat{\mu}_{\sigma} , \lambda{\collec{\Delta/n}} \right)   \;\leq \;  
    \frac{1}{N} \left(2A+\frac{4 k}{1-k\alpha}     \right)
      \;+\; 4k\alpha \,.
\end{equation}
\end{prop}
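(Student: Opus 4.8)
The plan is to compare the two measures $\hat\mu_\sigma$ and $\lambda\collec{\Delta/n}$ by building an explicit coupling (joint distribution) of the two and bounding the expected Euclidean distance between the coupled points. The key structural fact is that both measures decompose as $\frac1k$-weighted mixtures over the $k$ increasing subsequences: on the one side, $\hat\mu_\sigma = \frac1N\sum_{i=1}^k\sum_{j=1}^{n_i}\delta_{(g_{i,j}/N,\,\sigma(g_{i,j})/N)}$, which, since each $n_i$ is within $\alpha N$ of $N/k$, is close in $\textup{Wass}_1$ to the mixture $\frac1k\sum_{i=1}^k \hat\mu_\sigma^{(i)}$ where $\hat\mu_\sigma^{(i)} = \frac1{n_i}\sum_{j=1}^{n_i}\delta_{(g_{i,j}/N,\,\sigma(g_{i,j})/N)}$ is the empirical measure of the $i$th subsequence; on the other side, $\lambda\collec{\Delta/n} = \frac1k\sum_{i=1}^k\lambda_{\Delta_i/n_i}$ by definition. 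By Lemma \ref{lem.wass2} the re-weighting from $\{n_i/N\}$ to $\{1/k\}$ costs at most $\textup{diam}(\pargram)\sum_i|n_i/N - 1/k| \le \textup{diam}(\pargram)\cdot k\alpha$, and one checks $\textup{diam}(\pargram)\le$ a small absolute constant (at most $\sqrt{5}< 3$, or one can be cruder), which is where the $4k\alpha$ term will come from. Then by Lemma \ref{lem.wass1} it suffices to bound $\textup{Wass}_1(\hat\mu_\sigma^{(i)},\lambda_{\Delta_i/n_i})$ for each fixed $i$.

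For the per-subsequence bound, I would construct the coupling as follows: let $U$ be uniform on $\{1,\dots,n_i\}$, and match the atom of $\hat\mu_\sigma^{(i)}$ at $(g_{i,U}/N,\,\sigma(g_{i,U})/N)$ with the point on the segment $\lambda_{\Delta_i/n_i}$ obtained by the natural "uniform spacing" parametrization — e.g.\ the point at arc-length fraction roughly $U/n_i$, or equivalently the point of the segment $y=x+\Delta_i/n_i$ with $x$-coordinate equal to the "ideal" location $U\cdot\frac{1}{n_i}\cdot\frac{1}{\cdots}$ rescaled appropriately. (One must be slightly careful to make this an exact coupling with $\lambda_{\Delta_i/n_i}$ as the second marginal; the clean way is to push forward the uniform measure on the segment, partitioning it into $n_i$ equal sub-segments and pairing the $j$th with the $j$th atom.) The distance between the $j$th matched pair is then controlled by Lemma \ref{lem.V**}: Equation (\ref{eq.stripbound1}) says $|g_{i,j} - jN/n_i| < A + k/(1-k\alpha)$, and Equation (\ref{eq.stripbound3}) (together with the fact that $\sigma(g_{i,j})$ lies on the strip $\textbf{Strip}[\Delta_iN/n_i]$) says $\sigma(g_{i,j})$ is within $2(A+k/(1-k\alpha))$ of $g_{i,j}+\Delta_i N/n_i$. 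Dividing by $N$, both coordinates of the matched pair differ by at most $\frac1N(2A + \frac{4k}{1-k\alpha})$ or so; taking the Euclidean norm (and absorbing the factor $\sqrt2$ by being generous with constants) gives the first term on the right-hand side of (\ref{eq.wass1bd}). Averaging over $j$ via $U$ preserves this bound.

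The main obstacle, and the step requiring the most care, is making the per-subsequence coupling genuinely exact — i.e.\ verifying that the second marginal of the constructed joint distribution is exactly $\lambda_{\Delta_i/n_i}$ (uniform on a line segment, hence a continuous measure) while the first marginal is a purely atomic measure with $n_i$ atoms. This is a discrete-to-continuous matching, so one cannot use Lemma \ref{lem.unif}; instead I would parametrize the segment $\lambda_{\Delta_i/n_i}$ by a uniform random variable $R\in[0,1]$, set $U=\lceil n_i R\rceil$, match $R$'s point on the segment with the $U$th atom, and then bound the distance pointwise: for $R\in((j-1)/n_i,\,j/n_i]$ the segment-point has $x$-coordinate within $1/n_i\le \frac{k}{(1-k\alpha)N}\cdot\frac{?}{}$ — actually within $1/n_i \le \frac{k}{(1-k\alpha)}\cdot\frac1N$ is not quite right since $1/n_i$ is already $O(1/N)$ — of $jN/(n_i)\cdot\frac1N = j/n_i$, so the extra slack from $R$ not being exactly $j/n_i$ is at most $O(1/N)$, comfortably absorbed. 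The rest is the routine triangle-inequality bookkeeping (strip width, re-weighting, $\textup{diam}(\pargram)$) that I would not grind through here. Everything else — Lemma \ref{lem.wass1}, Lemma \ref{lem.wass2}, Lemma \ref{lem.V**}, and Definition \ref{def.Dom} — is quoted directly from earlier in the paper.
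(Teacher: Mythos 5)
Your strategy is the paper's strategy: decompose both sides into a $\frac1k$-mixture over the $k$ strips, pay the re-weighting cost via Lemma \ref{lem.wass2}, and couple each strip's atomic measure to the corresponding line segment by partitioning the segment into $n_i$ equal sub-intervals indexed by $\lceil n_i U\rceil$. The paper organizes the triangle inequality through an intermediate discrete measure $\Lambda=\frac1N\sum_i\sum_j\delta_{(j/n_i,(j+\Delta_i)/n_i)}$ and re-weights at the end, while you re-weight at the start and then couple each $\hat\mu_\sigma^{(i)}$ directly to $\lambda_{\Delta_i/n_i}$; these are the same computation in a different order, and both give exactly the stated bound.

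Two numerical slips, neither fatal. First, $\textup{diam}(\pargram)=\sqrt{10}$ (the parallelogram has vertices $(0,-1),(0,1),(1,0),(1,2)$, and $(0,-1)$ to $(1,2)$ is $\sqrt{10}$), not $\sqrt{5}$; your conclusion $\textup{diam}(\pargram)\cdot k\alpha\le 4k\alpha$ still holds since $\sqrt{10}<4$. Second, routing the $y$-coordinate estimate through Equation (\ref{eq.stripbound3}) is wasteful: since $\sigma(g_{i,j})=h_{i,j+\Delta_i}$, Equation (\ref{eq.stripbound2}) applied at index $j+\Delta_i$ gives $\bigl|\sigma(g_{i,j})/N-(j+\Delta_i)/n_i\bigr|<\frac1N\bigl(A+\frac{k}{1-k\alpha}\bigr)$ in one step, whereas chaining (\ref{eq.stripbound3}) with (\ref{eq.stripbound1}) produces an extra factor that overshoots $\frac1N\bigl(2A+\frac{4k}{1-k\alpha}\bigr)$. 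Using (\ref{eq.stripbound2}) directly, together with the $\le 2/n_i\le\frac{2k}{(1-k\alpha)N}$ contribution from rounding within each sub-interval, the per-strip coupling cost is $\frac{2}{N}\bigl(A+\frac{k}{1-k\alpha}\bigr)+\frac{2k}{(1-k\alpha)N}=\frac1N\bigl(2A+\frac{4k}{1-k\alpha}\bigr)$, which combined with the $4k\alpha$ re-weighting term is exactly (\ref{eq.wass1bd}).
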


\begin{proof}
According to the definition of the function $\Psi$ preceding Lemma
\ref{lem.sigmaaffine}, we can write
\[  \hat{\mu}_{\sigma} \;=\; \frac{1}{N}\sum_{i=1}^k \sum_{j=1}^{n_i}
    \delta_{\left(g_{ij}/N\,,\,h_{i,(j+\Delta_i)}/N \right)}\,.
\]
Define the probability measure
\[    \Lambda \;=\;  \frac{1}{N}\sum_{i=1}^k \sum_{j=1}^{n_i}
    \delta_{\left(j/n_i\,,\,(j+\Delta_i)/n_i \right)}\,. 
\]
By Lemma \ref{lem.wass1} and the general property that 
$\wass(\delta_x,\delta_v)=\rho(x,v)$, we have
\begin{eqnarray}
   \nonumber
   \wass_1\left(\hat{\mu}_{\sigma},\Lambda\right)  & \leq & 
     \frac{1}{N}\sum_{i=1}^k \sum_{j=1}^{n_i} \left(
    \left|\frac{g_{ij}}{N}-\frac{j}{n_i} \right| \,+\,
    \left| \frac{h_{i,(j+\Delta_i)}}{N}-\frac{j+\Delta_i}{n_i} \right|
     \right)
     \\
     \nonumber
       & \leq & 
     \frac{1}{N}\sum_{i=1}^k \sum_{j=1}^{n_i} \frac{2}{N}
       \left( A+\frac{k}{1-k\alpha}\right)   \hspace{7mm}
       \hbox{(by Lemma \ref{lem.V**})}
       \\
      \label{eq.muLam} 
       & = & \frac{2}{N}  \left( A+\frac{k}{1-k\alpha}\right) \,.
\end{eqnarray}

Let $U$ be a random variable with uniform distribution on the
interval $(0,1)$.
For $i\in [k]$, define the two $\R^2$-valued random vectors 
\[    \vec{\gamma}\;:=\; \left( U, U+\frac{\Delta_i}{n_i} \right)
    \hspace{5mm}\hbox{and}\hspace{5mm}
    \vec{\kappa} \;:=\;  \left(  \frac{\lceil n_iU\rceil}{n_i}\,,\,
   \frac{\lceil n_iU\rceil}{n_i} +\frac{\Delta_i}{n_i} \right)
\]
(here, $\lceil \cdot \rceil$ is the ceiling function).
Then $\vec{\gamma}$ has distribution $\lambda_{\Delta_i/n_i}$ and
$\vec{\kappa}$ has distribution $\Lambda_i$, where
\[    \Lambda_i \:=\; \frac{1}{n_i}\sum_{j=1}^{n_i}    
    \delta_{\left(j/n_i\,,\,(j+\Delta_i)/n_i \right)}\,.
\]
Since 
 $\left|u-\frac{\lceil n_i u\rceil}{n_i}\right| \leq \frac{1}{n_i}$
for every real $u$, we see that 
$\rho(\vec{\gamma},\vec{\kappa})\leq 2/n_i$ with probability $1$, 
and hence $\wass_1(\lambda_{\Delta_i/n_i},\Lambda_i)\leq 2/n_i$. 
Noting that $\Lambda=\sum_{i=1}^k \frac{n_i}{N}\,\Lambda_i$, 
we deduce from Lemma \ref{lem.wass1} that 
\begin{equation}
     \label{eq.wasslam}
     \wass_1\left(\sum_{i=1}^k \frac{n_i}{N}\lambda_{\Delta_i/n_i}\,,\,\Lambda\right) \;\leq\;
  \sum_{i=1}^k \frac{n_i}{N}\,\frac{2}{n_i} \;=\; \frac{2k}{N} \,.
\end{equation}

Next, since $\textup{diam}(\pargram)=\sqrt{10}$, 
Lemma \ref{lem.wass2} tells us that
\begin{equation}
    \label{eq.wasslam2}
    \wass_1\left(\lambda\collec{\Delta/n}\,,\,
    \sum_{i=1}^k \frac{n_i}{N}\lambda_{\Delta_i/n_i} \right)
    \;\leq \;
    \sqrt{10}\,\sum_{i=1}^k \left| \frac{1}{k} -\frac{n_i}{N} \right|
    \;< \;  4k\alpha
\end{equation}
(where the second inequality uses the definition of
\Dom, specifically Equation \eqref{eqn:defNNa}).  Finally, the proposition follows from the triangle 
inequality and Equations (\ref{eq.muLam}--\ref{eq.wasslam2}).
\end{proof}

To prepare us for Step 3, we first prove a lemma about 
random measures of the form $\lambda^Q$ as defined in 
Definition \ref{def.lammix}(c).

\begin{lemma}
    \label{lem.lambdaQ}
Let $QA$ and $QB$ be two (discrete or continuous)
subsets of $[-1,1]^k$.  Then 
\[   \wass_2\left(\lambda^{QA},\lambda^{QB}\right)  \;\leq\;
     \wass_1\left(\textup{Unif}(QA),\textup{Unif}(QB)\right).
\]
\end{lemma}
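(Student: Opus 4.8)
The plan is to produce a coupling between the random measures $\lambda^{QA}$ and $\lambda^{QB}$ (which live in $\textup{PM}_2$) whose transport cost is bounded by $\wass_1(\textup{Unif}(QA),\textup{Unif}(QB))$, and then invoke the definition of $\wass_2$ as an infimum over couplings. The key observation is that $\lambda^{Q}$ is, by Definition \ref{def.lammix}(c), the pushforward of $\textup{Unif}(Q)$ under the (deterministic) map $T\colon [-1,1]^k \to \textup{PM}_1$ given by $T(\collec{z}) := \lambda\collec{z}$. So if we can control how $\wass_1(\lambda\collec{z},\lambda\collec{z'})$ compares to the Euclidean distance $\|\collec{z}-\collec{z'}\|$ on $[-1,1]^k$, we are done by a change-of-variables argument.

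First I would establish the pointwise Lipschitz-type bound: for any $\collec{z},\collec{z'}\in[-1,1]^k$,
\[
   \wass_1(\lambda\collec{z},\lambda\collec{z'}) \;\le\; \frac{1}{k}\sum_{i=1}^k \wass_1(\lambda_{z_i},\lambda_{z'_i}) \;\le\; \frac{1}{k}\sum_{i=1}^k |z_i - z'_i| \;\le\; \|\collec{z}-\collec{z'}\|,
\]
where the first inequality is Lemma \ref{lem.wass1} and the middle inequality holds because $\lambda_{z}$ and $\lambda_{z'}$ are both uniform on parallel unit-length segments of slope $1$, so the obvious vertical-shift coupling $U\mapsto(U,U+z)$ versus $U\mapsto(U,U+z')$ realizes transport cost exactly $|z-z'|$. (Actually the inequality $\tfrac1k\sum|z_i-z_i'|\le\|\collec z-\collec z'\|$ needs only Cauchy--Schwarz, so even $\tfrac1k\sum|z_i-z_i'|\le\tfrac1{\sqrt k}\|\collec z-\collec z'\|\le\|\collec z-\collec z'\|$.)

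Next I would take an optimal coupling $\mathcal{J}$ on $[-1,1]^k\times[-1,1]^k$ achieving $\wass_1(\textup{Unif}(QA),\textup{Unif}(QB))$ (which exists by Lemma 5.2 of \cite{Chen}, cited in the excerpt), and push it forward under $T\times T$ to obtain a coupling $\widetilde{\mathcal{J}}$ of $\lambda^{QA}$ and $\lambda^{QB}$ on $\textup{PM}_1\times\textup{PM}_1$; its marginals are correct precisely because $\lambda^{Q}$ is the pushforward of $\textup{Unif}(Q)$ under $T$. Then
\[
   \wass_2(\lambda^{QA},\lambda^{QB}) \;\le\; \int \wass_1(\mu,\mu')\,\widetilde{\mathcal{J}}(d\mu,d\mu') \;=\; \int \wass_1(\lambda\collec{z},\lambda\collec{z'})\,\mathcal{J}(d\collec{z},d\collec{z'}) \;\le\; \int \|\collec{z}-\collec{z'}\|\,\mathcal{J}(d\collec{z},d\collec{z'}),
\]
and the last integral equals $\wass_1(\textup{Unif}(QA),\textup{Unif}(QB))$ by the choice of $\mathcal{J}$. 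One small technical point to check is measurability of $T$ (so that the pushforward is well-defined and the first displayed inequality applies) — this follows since $\collec{z}\mapsto\lambda\collec{z}$ is itself continuous in the $\wass_1$ metric by the Lipschitz bound above.

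The only real obstacle is bookkeeping: making sure the pushforward-of-a-coupling construction is airtight when $QA$ and $QB$ may be of mixed type (one discrete, one continuous) or of different dimensions. But since the argument never uses anything about $QA,QB$ beyond their being Borel subsets carrying well-defined uniform distributions, the same coupling construction goes through uniformly; no case analysis on the type of $QA$, $QB$ is needed. Thus the Lipschitz estimate on $T$ is really the whole content, and it is elementary.
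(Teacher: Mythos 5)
Your proof is correct and follows essentially the same route as the paper's: establish the pointwise bound $\wass_1(\lambda\collec{z},\lambda\collec{z'})\le\|\collec{z}-\collec{z'}\|$ (the paper does this via the direct coupling $(U,U+z_J)\mapsto(U,U+z'_J)$, which is the same computation you reach via Lemma~\ref{lem.wass1} plus the vertical-shift coupling), then push an optimal coupling of $\textup{Unif}(QA)$ and $\textup{Unif}(QB)$ forward through $\collec{z}\mapsto\lambda\collec{z}$. The Cauchy--Schwarz and measurability remarks are harmless but not needed; the paper simply uses $\tfrac1k\sum_i|z_i-z'_i|\le\|\collec{z}-\collec{z'}\|$ termwise and leaves measurability implicit.
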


\begin{proof}
Let $U$ be a uniformly distributed random variable on $[0,1]$,
and let $J$ be a uniformly distributed random variable on 
$\{1,\ldots,k\}$, independent of $U$.  
For $\collec{x}=(x_1,\ldots,x_k)\in[-1,1]^k$, the random 
point  $(U,U+x_J)$ has distribution $\lambda\collec{x}$.  
If also $\collec{v}\in [-1,1]^k$, then
\begin{eqnarray}
    \nonumber 
    \wass_1(\lambda\collec{x},\lambda\collec{v})  & \leq & 
      E|| (U,U+x_J)-(U,U+v_J)||   \\
      \nonumber 
      & = &  E|x_J-v_J|   \\
      \nonumber
      & = & \sum_{i=1}^k \frac{1}{k} |x_i-v_i| \\
      \label{eq.lamdif}
      & \leq & ||\collec{x}-\collec{v}||\,.
\end{eqnarray}

Let $(\collec{\beta^A},\collec{\beta^B})$ be an $(\R^k\times\R^k)$-valued
random vector 
such that $\collec{\beta^A}$ is uniformly distributed on $QA$,
$\collec{\beta^B}$ is uniformly distributed on $QB$,
and 
\[ E||\collec{\beta^A}-\collec{\beta^B}|| \,=\,
\wass_1\left(\textup{Unif}(QA),\textup{Unif}(QB)\right).
\]
Since the random measures $\lambda\collec{\beta^A}$ 
and $\lambda\collec{\beta^B}$ have distributions $\lambda^{QA}$ and
$\lambda^{QB}$ respectively, we obtain
\begin{align*}
  \wass_2\left(\lambda^{QA},\lambda^{QB}\right) & \leq 
  E\left(  \wass_1\left( \lambda\collec{\beta^A}\,,\,
    \lambda\collec{\beta^A} \right)\,\right)  \\ 
       & \leq      
             E||\collec{\beta^A}-\collec{\beta^B}||  
             \hspace{25mm}\hbox{(by Equation (\ref{eq.lamdif}))} \\
     & = \wass_1\left(\textup{Unif}(QA),\textup{Unif}(QB)\right)\,. \qedhere
\end{align*}
\end{proof}

Recall the definitions of $\mathfrak{N}(N,\alpha)$ and
$\mathcal{W}(\alpha,A,B,\collec{n})$ from Definition
\ref{def.lower}(c) and Lemma \ref{lem.W**} respectively.

\medskip

\begin{prop}
  \label{prop.wassnn}
Fix $k\geq 2$.  There is a  positive constant $C$, depending only
on $k$, such that the following holds.
Let $N$ be a natural number, let $A$ and $B$ be positive real numbers, and let $\alpha\in(0,1/4k)$.
Let $\collec{n}\in \mathfrak{N}(N,\alpha)$ and write
$\mathcal{W}$ for $\mathcal{W}(\alpha,A,B,\collec{n})$.
Let
\begin{equation}
  \label{eq.lambdawide}
  \lambda^{\widehat{\mathcal{W}}} \;=\;  \frac{1}{|\mathcal{W}|}
     \sum_{\collec{\Delta}\in\mathcal{W}}
     \delta_{\lambda\collec{\Delta/n}} \,.
\end{equation}
Then  
\begin{equation} 
  \label{eq.wasslamwidebound}
  \wass_2\left(\lambda^{\widehat{\mathcal{W}}}\,,\,
  \lambda^{Q_0}\right)  \;\leq \;  C\left( \alpha +\frac{A+B+1}{N}\right) \,.
\end{equation}
\end{prop}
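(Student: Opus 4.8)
The plan is to compare the discrete set $\widehat{\mathcal{W}}$ with the continuous simplex-like region $Q_0$ via Lemma \ref{lem.lambdaQ}, which reduces the problem to bounding $\wass_1(\textup{Unif}(\widehat{\mathcal{W}}),\textup{Unif}(Q_0))$ in $[-1,1]^k$. There are three sources of discrepancy to control: first, the points of $\widehat{\mathcal{W}}$ are $\{(\Delta_1/n_1,\ldots,\Delta_k/n_k):\collec{\Delta}\in\mathcal{W}\}$ rather than $\{(\Delta_1/n,\ldots,\Delta_k/n)\}$ for a common denominator, but since $|n_i/N-1/k|\le\alpha$ and $|\Delta_i|\le n_i$, replacing $\Delta_i/n_i$ by a rescaled version costs only $O(\alpha)$ per coordinate; second, $\mathcal{W}$ lives on a lattice of mesh $O(1/N)$, which contributes $O(1/N)$ to the Wasserstein distance; third, $\mathcal{W}$ is not quite the full set of lattice points of $\{|\Delta_i|\le n_i-B, \sum\Delta_i=0\}$ because we removed the points violating the separation condition \eqref{eq.Deltadiff}, and we also shrank each box from $n_i$ to $n_i-B$, each of which perturbs the region by an amount $O((A+B)/N)$ after rescaling.

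First I would introduce the ``idealized'' continuous set $Q_0^{\collec{n}} := \{\collec{x}\in\R^k : |x_i|\le (n_i-B)/n_i,\ \sum_i (n_i/N)x_i \cdot (\text{appropriate normalization})\}$ — more precisely, the image under $\collec{\Delta}\mapsto(\Delta_i/n_i)$ of the continuous box $\{|\Delta_i|\le n_i-B,\ \sum\Delta_i=0\}$, which is a $(k{-}1)$-dimensional polytope sitting inside a bounded hyperplane. Using the standard fact that a uniform measure on the lattice points of a ``fat'' convex body is close in Wasserstein distance to the uniform measure on the body itself, with error on the order of (mesh size)/(linear size), I would show $\wass_1(\textup{Unif}(\widehat{\mathcal{W}_{\mathrm{full}}}),\textup{Unif}(Q_0^{\collec{n}})) = O(1/N)$, where $\mathcal{W}_{\mathrm{full}}$ is $\mathcal{W}$ before deleting the separation-violating tuples. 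Then I would handle the deletion: by Lemma \ref{lem.W**}, Equation \eqref{eq.W**1}, the number of deleted tuples is $O(N^{k-2}\Theta_N) = O(N^{k-2}(A+1))$ out of $|\mathcal{W}_{\mathrm{full}}|\ge Z(t_N,\ldots,t_N) \sim t_N^{k-1}Z_k^* = \Theta(N^{k-1})$, so by Lemma \ref{lem.unif} the cost of passing from $\textup{Unif}(\widehat{\mathcal{W}_{\mathrm{full}}})$ to $\textup{Unif}(\widehat{\mathcal{W}})$ is at most $\textup{diam}\cdot O((A+1)/N) = O((A+1)/N)$. Finally, comparing $Q_0^{\collec{n}}$ to $Q_0$ itself is a matter of affine distortion: the defining inequalities differ by replacing $(n_i-B)/n_i$ with $1$ (a shift of size $O(B/N)$ in each of $k$ coordinates) and replacing the constraint $\sum\Delta_i=0$ by $\sum (n_i/N) x_i$-type weights that agree with $\frac{1}{k}\sum x_i$ up to $O(\alpha)$; a bounded convex region whose defining data is perturbed by $\varepsilon$ has uniform measure at Wasserstein distance $O(\varepsilon)$ from the original, giving $\wass_1(\textup{Unif}(Q_0^{\collec{n}}),\textup{Unif}(Q_0)) = O(\alpha + B/N)$.

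Assembling these three bounds by the triangle inequality for $\wass_1$ and then invoking Lemma \ref{lem.lambdaQ} yields $\wass_2(\lambda^{\widehat{\mathcal{W}}},\lambda^{Q_0}) = O(\alpha + (A+B+1)/N)$, which is exactly \eqref{eq.wasslamwidebound} for a suitable $C=C(k)$. The main obstacle I anticipate is making rigorous and quantitative the claim that the uniform measure on the lattice points inside a fat convex polytope is $\wass_1$-close to the uniform measure on the polytope, and more generally that small perturbations of the polytope's facets move the uniform measure only a little: one wants a clean lemma (e.g.\ via an explicit coupling that rounds each lattice point to a nearby sub-cube of the polytope, or that transports mass across the thin symmetric difference) with constants depending only on $k$ and on a lower bound for the polytope's inradius relative to its diameter. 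Since $t_N = \lfloor N/k - \alpha N - B\rfloor$ is $\Theta(N)$ and the polytope $Q_0^{\collec{n}}$ is genuinely $(k{-}1)$-dimensional with all the needed uniform fatness (because $\collec{n}\in\mathfrak{N}(N,\alpha)$ and $B$ will be taken $o(N)$), this fatness hypothesis is satisfied, but writing the coupling carefully — especially near the boundary, where the lattice structure interacts with the facets — is where the real work lies; everything else is bookkeeping with the triangle inequality and the already-established cardinality estimate from Lemma \ref{lem.W**}.
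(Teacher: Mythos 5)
Your approach is essentially the paper's: reduce via Lemma \ref{lem.lambdaQ} to a $\wass_1$ bound between uniform distributions on $[-1,1]^k$, then triangulate through intermediate measures using Lemmas \ref{lem.wass1} and \ref{lem.unif}. You correctly identify all three error sources (differing denominators $n_i$ vs.\ $N/k$, lattice mesh $O(1/N)$, deletion of separation-violating tuples and the $B$-shrinkage). However, the step you flag as ``where the real work lies'' --- a quantitative comparison between the uniform measure on the lattice points of a fat polytope and the uniform measure on the polytope itself --- is a genuine gap in the proposal as written: you invoke a ``standard fact'' without a precise statement or proof, and Lemma \ref{lem.unif} cannot bridge a discrete set to a continuous one (as the paper remarks after that lemma, $P_{\textup{Unif}(B)}(B\setminus A)=1$ when $A$ is discrete and $B$ continuous).

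The paper fills this gap by a completely explicit construction rather than a black-box lemma. After rescaling to $\mathcal{W}^*=\{\collec{\Delta^*}: \Delta^*_i=k\Delta_i/N\}$ (controlling the denominator discrepancy at cost $O(k^2\alpha)$ via Lemma \ref{lem.wass1}), it assigns to each $\collec{\Delta}\in\mathcal{W}$ a half-open $(k{-}1)$-cube $\textup{Cube}\collec{\Delta^*}$ of side $k/N$ inside the hyperplane $\mathcal{P}_0$, and sets $\mathcal{Y}=\bigcup_{\collec{\Delta}\in\mathcal{W}}\textup{Cube}\collec{\Delta^*}$. Because these cubes partition $\mathcal{P}_0$ as $\collec{\Delta}$ ranges over $\mathcal{P}_0\cap\Z^k$, $\textup{Unif}(\mathcal{Y})$ is exactly the equal-weight mixture of the $\textup{Unif}(\textup{Cube}\collec{\Delta^*})$, so Lemma \ref{lem.wass1} plus a diameter bound on each cube gives $\wass_1(\textup{Unif}(\mathcal{Y}),\textup{Unif}(\mathcal{W}^*))\le 2k(k-1)/N$ --- precisely the ``explicit coupling that rounds each lattice point to a nearby sub-cube'' you conjectured would be needed. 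The paper also packages the deletion and $B$-shrinkage differently from your proposal: rather than two separate triangle-inequality legs ($\widehat{\mathcal{W}}\to\widehat{\mathcal{W}_{\mathrm{full}}}\to Q_0^{\langle n\rangle}\to Q_0$), it shows $\mathcal{Y}\subset Q_1:=(1+k\alpha+k^2/N)Q_0$ and bounds $\textup{Leb}_{k-1}(Q_1\setminus\mathcal{Y})$ directly --- the deleted tuples contribute thin slabs $\mathcal{D}_{ij}(r)$ of width $O(\alpha+A/N)$, the $B$-shrinkage contributes an annulus of relative volume $O(\alpha+B/N)$ --- then applies Lemma \ref{lem.unif} twice, each time to a continuous-to-continuous pair. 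Either bookkeeping should work once you have the cube construction; you should also dispatch the degenerate regime $B\ge N/2k$ (where $t_N$ may be nonpositive and the cardinality lower bound $Z(t_N,\ldots,t_N)$ is unavailable) by noting the bound \eqref{eq.wasslamwidebound} is trivial there, as the paper does at the outset of its proof.
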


\begin{proof}
We begin by setting some notation.
For $\collec{x}=(x_1,\ldots,x_k)\in \mathbb{Z}^k$, 
let $\collec{\widehat{x}}$ and $\collec{x^*}$
be the rescaled vectors
\[     \collec{\widehat{x}}   \;=\; \left( \frac{x_1}{n_1},
    \cdots,\frac{x_k}{n_k}\right) \hspace{5mm}\hbox{and}\hspace{5mm}
   \collec{x^*}   \;=\; \left( \frac{k\,x_1}{N},
    \cdots,\frac{k\,x_k}{N}\right)  \,,
\]
and let the corresponding sets of rescaled $\collec{\Delta}$ vectors be
\[   \widehat{\mathcal{W}}  \;=\;  \left\{ \collec{\widehat{\Delta}}
    \,:\, \collec{\Delta}\in \mathcal{W}  \right\}
    \hspace{5mm}\hbox{and} \hspace{5mm} 
   \mathcal{W}^*  \;=\;  \left\{ \collec{\Delta^*}
    \,:\, \collec{\Delta}\in \mathcal{W}  \right\} \,.
\]
With this notation, the definition of $\lambda^{\widehat{W}}$
in Equation (\ref{eq.lambdawide}) is consistent with 
the definition given in Definition \ref{def.lammix}(c).

If $B \geq N/2k$, then the bound (\ref{eq.wasslamwidebound}) 
holds whenever $C\geq 2k \,\textup{diam}(\pargram)$.  Thus, without
loss of generality, we can and shall assume $B<N/2k$ in this proof.
Similarly, we shall assume that $N>4k^2$.

By Lemma \ref{lem.lambdaQ}, it suffices to prove the desired
upper bound for 
$\wass_1\!\left(\textup{Unif}(\widehat{\mathcal{W}}),\,
\textup{Unif}(Q_0)\right)$.
To do this, we shall define an intermediate continuous set 
$\mathcal{Y}$ of dimension $k-1$, and show that 
$\wass_1(\textup{Unif}(\widehat{\mathcal{W}}),
\textup{Unif}(\mathcal{W}^*))$, 
$\wass_1(\textup{Unif}(\mathcal{W}^*), \,
\textup{Unif}(\mathcal{Y}))$,
and $\wass_1(\textup{Unif}(\mathcal{Y}), \textup{Unif}(Q_0))$
are all small.  The third term will be handled with 
Lemma \ref{lem.unif}, while the other two will be treated directly.  

First we show that Unif$(\widehat{\mathcal{W}})$ is close to 
Unif$(\mathcal{W}^*)$.  For each $\collec{\Delta}\in \mathcal{W}$,
we have
\begin{eqnarray*}
   \wass_1\left(\delta_{\collec{\Delta^*}}\,,\,
        \delta_{\collec{\widehat{\Delta} } } \right)  \;=\;
    ||\collec{\Delta^*}-\collec{\widehat{\Delta}}||  & \leq & 
      \sum_{i=1}^k \left|  \frac{k\Delta_i}{N}-\frac{\Delta_i}{n_i}
       \right|    \\
       & = &   \sum_{i=1}^k  \frac{|\Delta_i|}{n_i}\,\frac{k}{N} \,
         \left|  n_i-\frac{N}{k}\right|   \\
        & <  &   \sum_{i=1}^k 1\cdot \frac{k}{N} \cdot\alpha N \\
        & = & k^2 \alpha \,.
\end{eqnarray*}
Using this bound together with Lemma \ref{lem.wass1} shows that
\begin{equation}
    \label{eq.wassbdWW}
     \wass_1\left(\textup{Unif}(\mathcal{W}^*)\,,\,
        \textup{Unif}(\widehat{\mathcal{W}})\right)  \;\leq \;
        k^2 \alpha \,.
\end{equation}

Next we define a continuous set $\mathcal{Y}$ of dimension $k-1$
that approximates the discrete set $\mathcal{W}^*$.
Let $\mathcal{P}_0$ be the hyperplane
\[    \mathcal{P}_0  \; :=\;  \{  (x_1,\ldots,x_k)\in \R^k \,:\,
     x_1+\cdots+x_k=0 \,\} \,.
\]
For each $\collec{z}\in \mathcal{P}_0$, let Cube$\collec{z}$ be the 
intersection of $\mathcal{P}_0$ with translation by $\collec{z}$
of the ``hypercubical tube'' $[0,k/N)^{k-1}\times \R$, i.e.
\begin{equation} 
  \label{eq.Cubedef}
  \textup{Cube}\collec{z} \;:=\;  \left\{ (x_1,\ldots,x_k)\in \mathcal{P}_0\,: \, z_i\leq x_i < z_i+\frac{k}{N}, \,i=1,\ldots,k-1 
    \right\} \,.
\end{equation}
Notice that for $x\in \textup{Cube}\collec{z}$, the relations
$x_k=-\sum_{i=1}^{k-1}x_i$ and $z_k=-\sum_{i=1}^{k-1}z_i$ imply that
\begin{equation}
    \label{eq.kcoord}
        z_k-\frac{k(k-1)}{N}  \;<\;  x_k  \;\leq \; z_k \,.
\end{equation}
It is important to observe that the collection of sets $\{\textup{Cube}\collec{z^*}\,:\,
\collec{z}\in \mathcal{P}_0 \cap \mathbb{Z}^k\}$ is a partition of
$\mathcal{P}_0$.

Let $\collec{\Delta}\in \mathcal{W}$ and let $\collec{x}\in\textup{Cube}\collec{\Delta^*}$.
Then $||\collec{x}-\collec{\Delta^*}||<2k(k-1)/N$ by Equations
(\ref{eq.Cubedef}) and (\ref{eq.kcoord}).  Therefore
\begin{equation} 
  \label{eq.wassCubedelta}
 \wass_1\left(\textup{Unif}({\textup{Cube}\collec{\Delta^*}}),
  \, \delta_{\collec{\Delta^*}} \right) \;\leq \; \frac{2k(k-1)}{N}\,.
\end{equation}
We now define the subset $\mathcal{Y}$ of $\mathcal{P}_0$ 
to be the union of Cube$\collec{z}$ over all 
$z\in \mathcal{W}^*$, i.e.
\[  \mathcal{Y} \;:=\;  \bigcup_{\Delta\in \mathcal{W}}
    \textup{Cube}\collec{\Delta^*} \,.  
\]
Since the sets Cube$\collec{\Delta^*}$ are all translates of one
another, we see that the uniform distribution on $\mathcal{Y}$
is the uniform mixture of the uniform distributions on its
constituent Cube sets:
\begin{equation}  
  \label{eq.unifYmix}
   P_{\textup{Unif}(\mathcal{Y})}   \;=\;
    \frac{1}{|\mathcal{W}|}\sum_{\collec{\Delta}\in \mathcal{W}}
    P_{\textup{Unif}(\textup{Cube}\collec{\Delta^*})}
\end{equation}
By Lemma \ref{lem.wass1} and Equations (\ref{eq.wassCubedelta}) and
(\ref{eq.unifYmix}), we see that
\begin{equation}
    \label{eq.wassUYUW}
     \wass_1\left(\textup{Unif}(\mathcal{Y}),\,
    \textup{Unif}(\mathcal{W}^*)\right) \;\leq \;\frac{2k(k-1)}{N} \,.
\end{equation}

Now we need to show that $\mathcal{Y}$ is a good approximation of 
$Q_0$.  First we claim
\begin{equation}
    \label{eq.YinQ0}
     \mathcal{Y} \;\subset \; Q_1, \hspace{5mm}\hbox{where we define}
     \hspace{3mm}Q_1\;:=\; \left(1+k\alpha+\frac{k^2}{N}\right) Q_0\,,
\end{equation}
using the standard notation for homothety:  
for positive $t$, $tQ_0=\{t\collec{x}:\collec{x}\in Q_0\}$.
Let $\collec{x}\in\mathcal{Y}$.  Then $\collec{x}\in\mathcal{P}_0$,
and $\collec{x}\in \textup{Cube}\collec{\Delta^*}$ for some
$\collec{\Delta}\in \mathcal{W}$.  Thus for each $i\in[k]$ we have
\[   |\Delta^*_i| \;=\; \frac{k|\Delta_i|}{N}  \;\leq \;
    \frac{k}{N}\left( \frac{N}{k}+\alpha N\right)   \;=\;  1+k\alpha\,.
\]
Since $|x_i|\leq (|\Delta_i^*|+(k-1))\frac{k}{N}$ by Equations 
(\ref{eq.Cubedef}) and (\ref{eq.kcoord}), we obtain
Equation (\ref{eq.YinQ0}).


Next we shall show that $Q_1\setminus \mathcal{Y}$ has small 
measure compared to $Q_1$.  
Let $\collec{x}\in Q_1\setminus\mathcal{Y}$, and define the point
$\collec{D}\in\mathbb{Z}^k$ by
\[    D_i \;=\; \left\lfloor \frac{N\,x_i}{k}\right\rfloor 
  \quad (i\in [k-1])
    \hspace{5mm}\hbox{and}\hspace{5mm}  D_k \;=\;
    -\sum_{i=1}^{k-1}D_i \,.
\]
Then $\collec{D}\in\mathcal{P}_0$ and $\collec{x}\in\textup{Cube}\collec{D^*}$.  
Since $\collec{x}\not\in\mathcal{Y}$, the point $\collec{D}$ cannot 
be in $\mathcal{W}$.  This means that one of two inequalities hold:
either
\\
(I)  $|D_i|>n_i-B$  for some $i\in [k]$, or 
\\
(II)  $\left| \frac{D_iN}{n_i}-\frac{D_{j}N}{n_{j}}  \right|
 \,\leq \, 8A+8k/(1-k\alpha)$
for some $i,j\in [k]$ with $i\neq j$.
\\
On the one hand, if (I) holds, then 
\begin{equation*}
  |D_i^*|  \;=\;  \frac{k}{N}\,|D_i|  \;>\;
   \frac{k}{N}\left(\frac{N}{k}-\alpha N -B \right)
   \;=\; 1-k\alpha  -\frac{kB}{N}
   \hspace{5mm}  \hbox{for all }i\in[ k];
\end{equation*}
hence $|x_i| > 1-k\alpha-kB/N -k(k-1)/N$ for all $i$ 
(by Equations (\ref{eq.Cubedef}) and (\ref{eq.kcoord})).
Therefore,
\begin{equation}
    \label{eq.caseIresult}
  \collec{x}\;\in\;  Q_1\setminus 
  \left(1-k\alpha-\frac{kB}{N}-\frac{k^2}{N}\right)Q_0
  \hspace{5mm}\hbox{in case (I).}
\end{equation} 
(Notice that $1-k\alpha-kB/N-k^2/N>0$, due to our assumptions 
that $\alpha<1/4k$, $B\leq N/2k$, and $N>4k^2$ from the 
beginning of the proof.)
On the other hand, if (II) holds for given $i$ and $j$, 
and (I) does not hold, then
\begin{eqnarray*}
  |D_i^*-D_j^*|  & = & \left| \frac{kD_i}{N}-\frac{kD_j}{N}\right|
  \\
   & \leq &  \left| \frac{kD_i}{N}- \frac{D_i}{n_i}\right|\, +\,
   \left| \frac{D_i}{n_i}-\frac{D_j}{n_j}\right|  +
   \left|\frac{D_j}{n_j}-  \frac{kD_j}{N}\right|
   \\
   & \leq &  
   \frac{k}{N}\,\frac{|D_i|}{n_i}\,\left| n_i-\frac{N}{k}\right|+
   \frac{8}{N}\left(A+\frac{k}{1-k\alpha}\right)  \,+\,
    \frac{k}{N}\,\frac{|D_j|}{n_j}\,\left|\frac{N}{k}-n_j\right|
    \\
    & \leq & R    \hspace{11mm}\hbox{where} \hspace{4mm}  
       R\,=\, 2k\alpha \,+\,\frac{8}{N}\left(A+\frac{k}{1-k\alpha}\right) \,,
\end{eqnarray*}
and hence $|x_i-x_j|\, \leq \, R+\frac{k}{N}+\frac{k(k-1)}{N}$
(again, using Equations (\ref{eq.Cubedef}) and (\ref{eq.kcoord})).

Summarizing the results of the preceding paragraph, we have 
shown 
\begin{equation}
    \label{eq.QYsubsetof}
    Q_1\setminus \mathcal{Y} \; \;\subset \;\;  
    \left[ Q_1\setminus  \left(1-k\alpha-\frac{kB}{N} 
    -\frac{k^2}{N}\right)Q_0  \right]
     \;\; \cup \;  \bigcup_{1\leq i<j\leq k} 
     \mathcal{D}_{ij}\left(R+\frac{k^2}{N}\right)
\end{equation}
where we define
\[  \mathcal{D}_{ij}(r) \;:=\;  \{\collec{x}\in\mathcal{P}_0 \cap 
  [-1,1]^k\,:\,  |x_i-x_j|\, \, \leq r \} \,.
\]
Write Leb$_{k-1}$ for $(k-1)$-dimensional Lebesgue measure.
Then we have
\begin{equation}
    \label{eq.lebscalet}
      \textup{Leb}_{k-1}(tQ_0)  \;=\;  t^{k-1}\textup{Leb}_{k-1}(Q_0)
      \hspace{5mm} \hbox{ for any $t>0$}, 
\end{equation}
from which it follows that
\begin{equation}
    \label{eq.lebscale}
        \textup{Leb}_{k-1}\left( Q_1\setminus  
        \left(1-k\alpha-\frac{kB}{N}-\frac{k^2}{N}\right)Q_0 \right)
        \;=\;
        \left[1- \left(\frac{1-k\alpha-\frac{kB}{N}-\frac{k^2}{N} }{
         1+ k\alpha+ \frac{k^2}{N} }\right)^{k-1}\right]
        \textup{Leb}_{k-1}(Q_1) \,.
\end{equation}
Next, we make three observations for $i,j\in[k]$ with $i\neq j$.
\\
(\textit{a}) A set of the form $\mathcal{D}_{ij}(r)$ lies
between two parallel hyperplanes $x_i-x_j=\pm r$, which are distance
$\sqrt{2}\,r$ apart.  
\\ 
(\textit{b})  The normal vector to any hyperplane 
$x_i-x_j=$ \textit{Constant} is perpendicular to the normal vector of 
$\mathcal{P}_0$; and
\\
(\textit{c}) The diameter of $[-1,1]^k$ is $2\sqrt{k}$. 
\\
By observation (\textit{b}), we can choose an orthonormal 
basis $\{\collec{e^{(\ell)}}:\ell\in[k]\}$ such that
$\collec{e^{(1)}}$ is orthogonal to hyperplanes 
$x_i-x_j=$ \textit{Constant} and
$\collec{e^{(k)}}$ is orthogonal to $\mathcal{P}_0$.
Let $\textbf{H}$ be the set of all vectors in $\R^k$ of the form 
$\sum_{\ell=1}^{k-1}t_{\ell}\collec{e^{(\ell)}}$ such that 
$|t_{\ell}|\leq \sqrt{k}$ for every $\ell\in [k{-}1]$.
Then $\textbf{H}$ is a $(k{-}1)$-dimensional 
hypercube of side length
$2\sqrt{k}$ centered at the origin, contained in $\mathcal{P}_0$,
with two of its faces contained in the two hyperplanes 
$x_i-x_j=\pm \sqrt{2k}$.
By (\textit{c}), this hypercube $\textbf{H}$ contains $Q_0$.
By (\textit{a}), Leb$_{k-1}(\mathcal{D}_{ij}(r))\,\leq \, 
\sqrt{2}\,r\times (2\sqrt{k})^{k-2}$.
Inserting this and Equation (\ref{eq.lebscale}) into Equation 
(\ref{eq.QYsubsetof}) yields
\begin{eqnarray}
    \nonumber 
    \textup{Leb}_{k-1}(Q_1\setminus \mathcal{Y})
        &\leq &
       \left[1- \left(\frac{1-k\alpha-\frac{kB}{N}-\frac{k^2}{N} }{
         1+ k\alpha+ \frac{k^2}{N} }\right)^{k-1}\right]
      \textup{Leb}_{k-1}(Q_1) 
      \\
       \label{eq.QYprob}   
    & & \hspace{4mm}  +\,\;
   \binom{k}{2} \sqrt{2}\left( 2k\alpha+
      \frac{1}{N}\left(8A+k^2+\frac{8k}{1-k\alpha}\right)
      \right) (2\sqrt{k})^{k-2}
\end{eqnarray}

Now we can put the pieces together.  
\begin{align*}
   \wass_2\left(\lambda^{\widehat{\mathcal{W}}}\,,\,
  \lambda^{Q_0}\right)  & \leq
    \wass_1\left(\textup{Unif}(\mathcal{W})\,,\,
     \textup{Unif}(Q_0)\right) 
     \hspace{16mm}\hbox{(by Lemma \ref{lem.lambdaQ})}
     \\
   & \leq 
   \wass_1\left(\textup{Unif}(\mathcal{\widehat{W}})\,,\,
     \textup{Unif}(\mathcal{W^*}) \right) \,+\,
     \wass_1\left( \textup{Unif}(\mathcal{W^*}) \,,\,
        \textup{Unif}(\mathcal{Y}) \right) 
        \\
      & \quad  \,+\,
       \wass_1\left( \textup{Unif}(\mathcal{Y})\,,\,
         \textup{Unif}(Q_1)\right) \,+\,
       \wass_1\left( \textup{Unif}(Q_1)\,,\,
         \textup{Unif}(Q_0)\right)  
         \\
         & \leq k^2\alpha \,+\, \frac{2k^2}{N}  
      \, +\,\textup{diam}(Q_1)\left[
          \frac{ \textup{Leb}_{k-1}(Q_1\setminus\mathcal{Y})}{
          \textup{Leb}_{k-1}(Q_1)} \,+\,
         \frac{ \textup{Leb}_{k-1}(Q_1\setminus Q_0)}{
          \textup{Leb}_{k-1}(Q_1)}  \right]
          \\
          & \hspace{5mm}\hbox{(by Equations (\ref{eq.wassbdWW})
          and (\ref{eq.wassUYUW}), and Lemma \ref{lem.unif})}
          \\
          & = O\left(\alpha +\frac{A+B+1}{N}\right) 
          \\
        & \hspace{5mm}\hbox{(by Equations (\ref{eq.QYprob}),
         (\ref{eq.lebscalet}), and (\ref{eq.YinQ0}) ).} \qedhere
\end{align*}
\end{proof}

\begin{prop}
   \label{prop.wassSIm}
Given $\alpha\in (0,1/(3k))$, 
let $\Im(N,\alpha)$ be the image     
of $\Dom(N,\alpha, \alpha N,2\alpha N)$ under $\Psi$.  
\\
(a) For sufficiently large $N$, we have $\Im(N,\alpha)\subset \E_N(\deck)$ and the restriction of $\Psi$ to
$\Dom(N,\alpha, \alpha N,2\alpha N)$ is $k!$-to-one.
\\
(b) Moreover, for any $\epsilon>0$, there exists an 
$\alpha_{\epsilon}\in (0,1/4k)$ such that
\begin{equation} 
  \label{eq.wasseps}
  \limsup_{N\to\infty}\wass_2\left(\hat{\mu}[\E_N(\deck)] \,,\,
  \hat{\mu}[\Im(N,\alpha)]\right)  \;<\; \epsilon
   \hspace{5mm}\hbox{whenever $0<\alpha<\alpha_{\epsilon}$}.
\end{equation}
\end{prop}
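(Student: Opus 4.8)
The plan is to read off part~(a) from results already established, and to reduce part~(b) to the counting estimates of Section~\ref{sec.lowerbound} by way of Lemma~\ref{lem.unif}.

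\emph{Part (a).} With $A=\alpha N$, $B=2\alpha N$, and $\alpha\in(0,1/(3k))$ we have $\alpha<1/k$, so Lemma~\ref{lem.V**strips} already gives that $\Psi$ restricted to $\Dom(N,\alpha,\alpha N,2\alpha N)$ is exactly $k!$-to-one. For the inclusion $\Im(N,\alpha)\subset\E_N(\deck)$ I would invoke Corollary~\ref{cor.Dombound}; its hypothesis $\frac{kB}{1+k\alpha}\ge 2A+\frac{2k}{1-k\alpha}$ becomes here $2\alpha N\cdot\frac{k-1-k\alpha}{1+k\alpha}\ge\frac{2k}{1-k\alpha}$, and since $k\ge2$ and $\alpha<1/(3k)$ make $k-1-k\alpha>0$, the left side grows linearly in $N$ while the right side is constant, so the hypothesis holds for all sufficiently large $N$ --- exactly the remark already used in the proof of Proposition~\ref{prop.4321lower}.

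\emph{Part (b).} Fix $\alpha\in(0,1/4k)$ and restrict to $N$ large enough that part~(a) applies, so $\Im(N,\alpha)\subseteq\E_N(\deck)$. Because the map $\sigma\mapsto\hat{\mu}_\sigma$ is injective on affine permutations of size $N$ (the atoms of $\hat{\mu}_\sigma$ determine $\sigma$ on $[N]$, hence determine $\sigma$), the random measures $\hat{\mu}[\Im(N,\alpha)]$ and $\hat{\mu}[\E_N(\deck)]$ are the uniform distributions on the nested finite sets $\{\hat{\mu}_\sigma:\sigma\in\Im(N,\alpha)\}\subseteq\{\hat{\mu}_\sigma:\sigma\in\E_N(\deck)\}$ of $\textup{PM}_1$. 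Applying Lemma~\ref{lem.unif} with $\chi=\textup{PM}_1$ and $\rho=\wass_1$, and using $\textup{diam}(\textup{PM}_1)\le\textup{diam}(\pargram)=\sqrt{10}$, yields
\[
   \wass_2\bigl(\hat{\mu}[\Im(N,\alpha)],\,\hat{\mu}[\E_N(\deck)]\bigr)
   \;\le\;\sqrt{10}\left(1-\frac{|\Im(N,\alpha)|}{|\E_N(\deck)|}\right),
\]
so the problem reduces to bounding $1-|\Im(N,\alpha)|/|\E_N(\deck)|$ by something that, after $\limsup_{N\to\infty}$, vanishes as $\alpha\to0^+$. By part~(a), $|\Im(N,\alpha)|=\frac1{k!}|\Dom(N,\alpha,\alpha N,2\alpha N)|$; the chain of estimates in the proof of Proposition~\ref{prop.4321lower} (Equation~(\ref{eq.W**01}) with Equation~(\ref{eq.Hoeff2}) and Theorem~\ref{thm.richmond}) gives
\[
  \liminf_{N\to\infty}\frac{|\Im(N,\alpha)|}{N^{k-1}k^{2N+k/2}(4\pi N)^{(1-k)/2}}
  \;\ge\;\frac1{k!}\left[\left(\frac1k-3\alpha\right)^{k-1}Z^*_k-\binom{k}{2}2^{k+2}\alpha\right],
\]
while Theorem~\ref{thm.E4321}, after rewriting its normalizing factor, gives
\[
  \lim_{N\to\infty}\frac{|\E_N(\deck)|}{N^{k-1}k^{2N+k/2}(4\pi N)^{(1-k)/2}}
  \;=\;\frac{Z^*_k}{k^k(k-1)!}\;=\;\frac1{k!}\cdot\frac{Z^*_k}{k^{k-1}}.
\]
Dividing, $\limsup_{N\to\infty}\bigl(1-|\Im(N,\alpha)|/|\E_N(\deck)|\bigr)\le 1-(1-3k\alpha)^{k-1}+\frac{k^{k-1}\binom{k}{2}2^{k+2}}{Z^*_k}\,\alpha$, which tends to $0$ as $\alpha\to0^+$. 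Hence, given $\epsilon>0$, I can choose $\alpha_\epsilon\in(0,1/4k)$ so small that $\sqrt{10}$ times this last quantity is below $\epsilon$ for every $\alpha\in(0,\alpha_\epsilon)$, which is exactly~(\ref{eq.wasseps}).

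The argument is essentially assembly of earlier results; the one genuinely new step is the reduction through Lemma~\ref{lem.unif}. The only point requiring care --- not really an obstacle --- is checking that $\Im(N,\alpha)$ asymptotically exhausts $\E_N(\deck)$ as $\alpha\to0$, i.e.\ that the $\alpha\to0$ limit of the Section~\ref{sec.lowerbound} lower bound for $|\Dom|/k!$ matches the exact asymptotics of $|\E_N(\deck)|$ from Theorem~\ref{thm.E4321}, together with keeping the order of the two limits ($N\to\infty$ first, then $\alpha\to0$) straight throughout.
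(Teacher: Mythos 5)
Your proposal is correct and follows the same route as the paper: reduce part~(b) via Lemma~\ref{lem.unif} (with $\chi=\textup{PM}_1$) to the cardinality ratio $|\Im(N,\alpha)|/|\E_N(\deck)|$, then show that ratio tends to $1$ as $\alpha\to0$ by combining the lower bound from the proof of Proposition~\ref{prop.4321lower} (Equation~(\ref{eq.SNDomsqueeze})) with Theorem~\ref{thm.E4321}. You have simply filled in the intermediate arithmetic that the paper leaves implicit; the normalization check and the final bound $1-(1-3k\alpha)^{k-1}+O(\alpha)$ are both accurate.
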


\begin{proof}
Part (a) follows from Corollary \ref{cor.Dombound} and Lemma
\ref{lem.V**strips}.   Next, by Lemma \ref{lem.unif}, 
\[   \wass_2\left(\hat{\mu}[\E_N(\deck)] \,,\,
  \hat{\mu}[\Im(N,\alpha)]\right) \;\leq \;
    \textup{diam}(\pargram) \,\left(1- \frac{|\Im(N,\alpha)|
    }{| \E_N(\deck) |}    \right) \,.
\]
Part (b) follows from part (a), Equation (\ref{eq.SNDomsqueeze}), 
and Theorem \ref{thm.E4321}.
\end{proof}

\begin{proof}[Proof of Theorem \ref{thm.wassmain}]
In this proof, we shall let $A=\alpha N$ and $B=2\alpha N$, where 
$\alpha$ is a small positive constant in $(0,1/4k)$.

Let $\collec{n}\in \mathfrak{N}(N,\alpha)$ and 
$\collec{\Delta}\in \mathcal{W}(\alpha,A,B,\collec{n})$.
Then for every 
$\collec{G},\collec{H}\in \mathcal{V}_N^{**A}(\collec{n})$, 
Proposition \ref{prop.domwass1} tells us that
\[   \wass_1\left(\lambda\collec{\Delta/n}, \, 
\hat{\mu}_{\Psi(\collec{n},\collec{G},\collec{H},\collec{\Delta})}
 \right) \;\leq \; 
 \frac{1}{N}\left(2A+\frac{4k}{1-k\alpha}\right) \;+\;4k\alpha \,,
\]
and hence that
\begin{equation}
  \label{eq.deltalambda}
 \wass_2\left(\delta_{\lambda\collec{\Delta/n}}, \, \delta_{
\hat{\mu}_{\Psi(\collec{n},\collec{G},\collec{H},\collec{\Delta})}}
  \right)   
  \;\leq \;   \frac{1}{N}\left(2A+\frac{4k}{1-k\alpha}\right)
    \;+\; 4k\alpha \,.
\end{equation}
Next, for such $\collec{n}$ and $\collec{\Delta}$, define the mixture
\[
   \mathcal{M}(\collec{n},\collec{\Delta}) \; :=\;
   \frac{1}{|\mathcal{V}_N^{**A}(\collec{n})|^2}  
   \sum_{\collec{G},\collec{H}\in \mathcal{V}_N^{**A}(\collec{n}) }
    \delta_{
\hat{\mu}_{\Psi(\collec{n},\collec{G},\collec{H},\collec{\Delta})}}
 \,.
\]
It follows from Lemma \ref{lem.wass1} and Equation (\ref{eq.deltalambda}) that
\begin{equation}
    \label{eq.wassMlambda}
    \wass_2\left( \delta_{\lambda\collec{\Delta/n}}, \,
      \mathcal{M}(\collec{n},\collec{\Delta}) \right)
       \;\leq \;   \frac{1}{N}\left(2A+\frac{4k}{1-k\alpha}\right)
       \;+\; 4k\alpha \,.
\end{equation}

By the properties of $\Psi$ described in Proposition \ref{prop.wassSIm}, we obtain  (writing $\Dom$ for
$\Dom(N,\alpha,\alpha N, 2\alpha N)$ and 
$\mathcal{W}\collec{n}$ for $\mathcal{W}(\collec{n},\alpha,\alpha N,
2\alpha N)$)
\begin{eqnarray}
    \nonumber
    \hat{\mu}[\Im(N,\alpha)]  & = & 
    \frac{1}{|\Im(N,\alpha)|}
    \sum_{\sigma\in \mathbf{Im}(N,\alpha)} 
    \delta_{\hat{\mu}_{\sigma}}  \\
    \nonumber
    & = &  \frac{1}{k! \,|\Im(N,\alpha)|}
    \sum_{\vec{v}\in \mathbf{Dom}} 
    \delta_{\hat{\mu}_{\Psi(\vec{v})}}  \\ 
      \label{eq.imnasum}
    & = & \frac{1}{|\Dom|} 
    \sum_{\collec{n}\in \mathfrak{N}(N,\alpha)}
    \sum_{\collec{\Delta}\in \mathcal{W}\collec{n}} 
    |\mathcal{V}_N^{**A}(\collec{n})|^2 \,
      \mathcal{M}(\collec{n},\collec{\Delta}) \,.
\end{eqnarray}

Define the random measure 
\[   \lambda^*  \;=\;  \sum_{\collec{n}\in \mathfrak{N}(N,\alpha)}
     \frac{|\mathcal{W}\collec{n}|\, |\mathcal{V}_N^{**A}(\collec{n})|^2}{|\Dom|} \,
     \lambda^{\widehat{\mathcal{W}\collec{n}}} 
\]
(c.f.\ Equation (\ref{eq.W**0})).  Then we can write
\begin{equation}
    \label{eq.lamstarsum}
     \lambda^*  \;=\;   \frac{1}{|\Dom|} 
    \sum_{\collec{n}\in \mathfrak{N}(N,\alpha)}
    \sum_{\collec{\Delta}\in \mathcal{W}\collec{n}} 
    |\mathcal{V}_N^{**A}(\collec{n})|^2 \,
      \delta_{\lambda\collec{n/\Delta}} \,.
\end{equation}
Then by Equations (\ref{eq.wassMlambda}--\ref{eq.lamstarsum})
and Lemma \ref{lem.wass1}, we obtain
\begin{equation}
    \label{eq.wassmulamstar}
    \wass_2\left(\lambda^*,\hat{\mu}[\Im(N,\alpha)]\right)
       \;\leq \; \frac{1}{N}\left(2A+\frac{4k}{1-k\alpha}\right)
       \;+\; 4k\alpha \,.
\end{equation}

By Proposition \ref{prop.wassnn} and Lemma \ref{lem.wass1}, we have
\begin{equation}
    \label{eq.lamstarlamQ}
    \wass_2\left(\lambda^*,\lambda^{Q_0}\right)
       \;\leq \; C\left(\alpha+\frac{A+B+1}{N}\right) \,.
\end{equation}
The triangle inequality gives
\begin{align}
   \nonumber
   \wass_2 & \left(\hat{\mu}[\E_N(\deck)] \,,\,\lambda^{Q_0} \right)
    \\
    \nonumber
    & \leq \;
     \wass_2\left(\hat{\mu}[\E_N(\deck)] \,,\, 
      \hat{\mu}[\Im(N,\alpha)]\right)  
      \\
      \label{eq.wassfintri}
      & \hspace{6mm}  \,+\,
     \wass_2\left( \hat{\mu}[\Im(N,\alpha)]\,,\,
       \lambda^* \right)  \,+\,
        \wass_2\left(\lambda^*,\lambda^{Q_0}\right) \,.
\end{align}
Let $\epsilon>0$ and let $\alpha_{\epsilon}\in (0,1/4k)$ be as 
specified in Equation (\ref{eq.wasseps}).  
Let $\alpha\in (0,\alpha_{\epsilon})$.  
Applying Equations (\ref{eq.wasseps}), (\ref{eq.wassmulamstar}), and
(\ref{eq.lamstarlamQ}) to Equation (\ref{eq.wassfintri}), we have
\[
    \limsup_{N\to\infty}  \wass_2 
    \left(\hat{\mu}[\E_N(\deck)] \,,\,\lambda^{Q_0} \right)
    \;\leq \;  \epsilon \,+\,  2\alpha \,+\, 4k\alpha \,+ 
    \, 4C\alpha \,.
\]
Since $\epsilon$ and $\alpha$ can both be chosen to be arbitrarily 
small, the above limsup must be zero.  This proves the theorem.
\end{proof}

\section*{Acknowledgments}

We are grateful to Tom Salisbury for a helpful discussion about
random measures.

\bigskip

\end{document}